\newcommand{\nc}{\newcommand}
\newcommand{\sgn}{\operatorname{sgn}}
\nc{\io}{\iota}
\newtheorem{theorem}{Theorem}[section]
\newtheorem{lemma}{Lemma}[section]
\newtheorem{prop}{Proposition}[section]
\newtheorem{fact}{Fact}[section]
\newtheorem{defn}{Definition}
\newtheorem{remark}{Remark}[section]
\newtheorem{corollary}{Corollary}
\newtheorem{example}{Example}
\newcommand{\weak}{\stackrel{w}{\longrightarrow}}
\newcommand{\prob}{\stackrel{P}{\longrightarrow}}
\newcommand{\eid}{\stackrel{d}{=}}
\newcommand{\one}{{\bf 1}}
\newcommand{\reals}{{\mathbb R}}
\newcommand{\bbr}{\reals}
\newcommand{\vep}{\varepsilon}
\newcommand{\bbz}{\protect{\mathbb Z}}
\newcommand{\bbn}{\protect{\mathbb N}}
\newcommand{\bbc}{\protect{\mathbb C}}
\newcommand{\ol}{\overline}
\def\Var{{\rm Var}}
\def\E{{\rm E}}
\def\esd{{\rm ESD}}
\def\eim{{\rm EM}}
\DeclareMathOperator{\Tr}{Tr} 
\DeclareMathOperator{\rank}{Rank}
\numberwithin{equation}{section}
\newcommand{\newcomment}[1]{}
\newcommand{\comment}[1]{}
\begin{document} 

\bibliographystyle{abbrvnat}

\title[Random matrices to long range dependence]{From random matrices to long range dependence}

\author[A. Chakrabarty]{Arijit Chakrabarty}
\address{Theoretical Statistics and Mathematics Unit, Indian Statistical Institute, New Delhi}
\email{arijit@isid.ac.in}

\author[R. S. Hazra]{Rajat Subhra Hazra}
\address{Theoretical Statistics and Mathematics Unit, Indian Statistical Institute, Kolkata}
\email{rajatmaths@gmail.com}

\author[D. Sarkar]{Deepayan Sarkar}
\address{Theoretical Statistics and Mathematics Unit, Indian Statistical Institute, New Delhi}
\email{deepayan@isid.ac.in}

\keywords{Random matrix, long range dependence, stationary Gaussian process, spectral density}

\subjclass[2010]{Primary 60B20; Secondary 60B10, 46L53}

\begin{abstract}
  Random matrices whose entries come from a stationary Gaussian process are
studied. The limiting behavior of the eigenvalues as the size of the
matrix goes to infinity is the main subject of interest in this work. It
is shown that the limiting spectral distribution is determined by the absolutely continuous component  of
the spectral measure of the stationary process. This is similar to the situation where the entries of the matrix are i.i.d.
On the other hand, the discrete component contributes to the  limiting
behavior of the eigenvalues after a different scaling. Therefore, this helps to
define a boundary between short and long range dependence of a stationary
Gaussian process in the context of random matrices.
\end{abstract}

\maketitle

\section{Introduction}
\label{sec:intro}

The notion of {\it long range dependence} is of significant importance
in the field of stochastic processes. Consider any stationary
stochastic process indexed by $\bbz$. If the process is an
i.i.d.\ collection, then it does not have any memory, and hence it is
{\it short range dependent}. A stark contrast to the i.i.d. situation is one where any two entries of the stochastic process are almost surely equal, in which case it is long range dependent.  For a general stochastic process which is not necessarily one of the these two extremes, whether it is long or short range dependent is
determined by whether it resembles an i.i.d. collection or a collection where the same random variable gets repeated. In order to
make the idea of resemblance precise, different functionals of the
process are studied. If the behavior of a functional of interest is
close to that in the i.i.d.\ setup, then the process is short range
dependent, otherwise it is long range dependent. Naturally, the
definition of long range dependence varies widely with context, and it
is not surprising that there are numerous definitions of this concept in
the literature, which are not equivalent. The survey article by
\cite{samorodnitsky:2006} describes in detail this notion from various
points of view.

The current paper is an attempt to understand long range dependence in
yet another context, namely that of random matrices. Let
$\{X_{j,k}:j,k\in\bbz\}$ be a {\bf real} stationary Gaussian process with zero
mean and positive variance. That means,
\[
 \E(X_{j,k})=0\,,
\]
\[
 \E\left(X_{j,k}^2\right)>0\,,
\]
and
\[
 \E\left(X_{j,k}  {X_{j+u,k+v}}\right)
\]
is independent of $j$ and $k$ for all
 fixed $u,v\in\bbz$. For $N\ge1$, define an $N\times N$ matrix $W_N$ by
\begin{equation}\label{eq.defan}
 W_N(i,j):=   X_{i,j}+{X_{j,i}}\,,
\end{equation}
for all $1\le i,j\le N$. Clearly, $W_N$ is symmetric by construction,
and hence its eigenvalues are all real. For any $N\times N$ symmetric
matrix $A$, denote its eigenvalues by
$\lambda_1(A)\le\ldots\le\lambda_N(A)$, and define its empirical spectral
distribution, henceforth abbreviated to $\esd$, by
\[
 \esd(A):=\frac1N\sum_{j=1}^N\delta_{\lambda_j(A)}\,.
\]

Section \ref{sec:result} lists the main results of the paper. That section is divided into three subsections.
In Subsection \ref{subsec:esd}, the results that study the limit of $\esd(W_N/\sqrt N)$ as
$N\to\infty$ are listed, the main result being Theorem \ref{t1}. In
Subsection \ref{subsec:em}, a variant of the $\esd$ called ``eigen measure'' is
defined. The main result of that subsection, Theorem \ref{discrete.t1},
studies the limit of the eigen measure of $W_N/N$ as $N\to\infty$. The
above two theorems motivate a natural definition of long range
dependence, which is discussed in Subsection \ref{subsec:lrd}. Section \ref{example} contains a corollary
and a few examples. The proofs of the results mentioned in Section \ref{sec:result} are given in Section \ref{sec:proof}.

We would like to point out that Theorem \ref{t1} is actually
an extension of the classical result by Wigner which says that if
$X_{i,j}$ are i.i.d.\ standard normal random variables, then
$\esd(W_N/\sqrt N)$ converges to the Wigner semicircle law (defined in \eqref{eq.defwsl}). 
Relaxation of the independence assumption has previously been investigated by
\cite{banna}, \cite{BKV},  \cite{BKA}, \cite{KM2008}, \cite{cha}, \cite{ GT},   \cite{magda}, \cite{marlevede:peligrad:peligrad:2015}, \cite{gotze:naumov:tikhomirov:2015} and \cite{RTWR}. 
\comment{correction-40}
The articles by \cite{Ngyuen:Rourke, adam, HLN, Naumov} and \cite{OE2012}
 have studied the sample covariance
matrix  and non-symmetric matrices after imposing some dependence structures.  A  work by \cite{anderson:zeitouni:2008}, which is  related to the current paper, considered the $\esd$ of Wigner matrices where on and off diagonal elements form a finite-range dependent random field; in
particular, the entries are assumed to be independent beyond a finite range,
and within the finite range the correlation structure is given by a kernel
function.  It should be noted that the results therein also apply for entries that are not necessarily Gaussian. The results of the current paper are more general but apply only to Gaussian matrices.
\newcomment{Comment 1}

 In particular, suppose that we are interested in finding the conditions on the
 process $(X_{i,j})$ under which one can compute the limiting $\esd$.
 Previous attempts have led to conditions that are equivalent to (or
 stronger than) assuming that the correlations are summable.  Our paper
 breaks this barrier by observing that one can go further, that is,
 just require that the spectral measure of the process does not contain
 a continuous singular component.  Needless to say, the summability of
 correlations does imply absolute continuity of the spectral measure,
 but the converse is not true. Therefore, the results in this paper are generalizations of some of those in the literature. In all the examples mentioned in Section \ref{example}, the correlations are not summable, and hence the results proved here are necessary for dealing with them.
 
 We also study the effect of the presence of a discrete component of
 the spectral measure, which does not affect the ESD but gives rise to
 interesting limiting behavior for the eigen measure.  If the random variables were i.i.d., then the random matrix under study would have a non-trivial limiting ESD, but the limiting eigen measure would have been trivial. On the other hand, if the correlation between any two entries of the matrix were one, that is, the entries were all same, then the limiting eigen measure would have been non-trivial, but not the limiting ESD. This illustrates that there is a natural divide between the processes whose spectral measure is absolutely continuous, and the ones whose spectral measure is discrete, the former resembling the i.i.d. case while the latter is similar to the situation that all the correlations are one.This is precisely the connection between the random matrix model and  long range dependence, explained further in Subsection \ref{subsec:lrd}. 

It turns out that in some situations, the limiting spectral distribution is the free multiplicative convolution of the Wigner semicircle law and a probability measure on the positive half line. Thus, the results of this paper can be used to answer questions from free probability, as has been done in \cite{chakrabarty:hazra:2016}. However, these constitute an area in their own right worth studying, and therefore, has not been touched in this paper.

\section{The results}\label{sec:result}

Define
\[
 R(u,v):=\E\left(X_{0,0}  {X_{u,v}}\right),\,u,v\in\bbz\,.
\]

The Herglotz representation theorem asserts that there exists a finite measure $\nu$ on $(-\pi,\pi]^2$ such that
\begin{equation}\label{p1.eq1}
 R(u,v)=\int_{(-\pi,\pi]^2} e^{\io(ux+vy)}\nu(dx,dy)\text{ for all }u,v\in\bbz\,,
\end{equation}
where $\io:=\sqrt{-1}$. Let  $\nu_{ac}$, $\nu_{cs}$ and $\nu_d$ denote the components of $\nu$ which are absolutely continuous with respect to the Lebesgue measure, continuous and singular with respect to the Lebesgue measure, that is, supported on a set of Lebesgue measure zero, and discrete, that is, supported on a countable set, respectively. Since $\nu_{ac}$ is absolutely continuous with respect to the Lebesgue measure, there exists a function $f$ from $[-\pi,\pi]^2$ to $[0,\infty)$ such that
\begin{equation}\label{eq.deff}
 \nu_{ac}(dx,dy)=f(x,y)dxdy\,.
\end{equation}

The one and only {\bf assumption} of this paper is that the continuous and singular component is absent, that is,
\begin{equation}\label{eq.assume}
 \nu_{cs}\equiv0\,.
\end{equation}
As a consequence, it follows that
\begin{equation*}
 \nu=\nu_{ac}+\nu_d\,.
\end{equation*}

\subsection{The empirical spectral distribution}\label{subsec:esd}

Denote 
\begin{equation}\label{eq.defmun}
 \mu_N:=\esd(W_N/\sqrt N),\,N\ge1\,,
\end{equation}
where $W_N$ is as in \eqref{eq.defan}.

The task of this subsection is to list the results that study the limiting spectral distribution (henceforth LSD) of $W_N/\sqrt N$, that is, the limit of the random probability measures $\mu_N$ as $N\to\infty$. The first result, Theorem \ref{t1} below, establishes that the limit exists. 

\begin{theorem}\label{t1}
 There exists a deterministic probability measure $\mu_f$, determined solely by the spectral density $f$ which is as in \eqref{eq.deff}, such that
 \[
  \mu_N\to\mu_f\,,
 \]
weakly in probability as $N\to\infty$. By saying that the LSD $\mu_f$ is determined by $f$, the following is meant. If for two stationary processes satisfying assumption \eqref{eq.assume}, the absolutely continuous component of the corresponding spectral measures match, then the LSD of the scaled symmetric random matrices formed by them also agree.
\end{theorem}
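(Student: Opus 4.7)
The plan is to use the method of moments, relying on the Isserlis--Wick formula for Gaussian variables and on a Riemann-sum argument to isolate the contribution of the absolutely continuous component $f$.

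First I would show convergence of expected moments. For each $k\ge1$ expand
\[
 \E\int x^k\,d\mu_N(x) = \frac{1}{N^{1+k/2}} \sum_{i_1,\ldots,i_k=1}^N \E\bigl[W_N(i_1,i_2)W_N(i_2,i_3)\cdots W_N(i_k,i_1)\bigr].
\]
Because the $X_{i,j}$ form a centred Gaussian field, Isserlis' theorem rewrites each expectation as a sum over pair partitions of $\{1,\ldots,k\}$ of products of pairwise covariances. By \eqref{eq.defan} each pairwise covariance is itself a sum of four values of $R$ at arguments like $(i_a-i_b,i_{a+1}-i_{b+1})$. Substituting $\nu = \nu_{ac}+\nu_d$ from \eqref{eq.assume} into \eqref{p1.eq1} splits every such $R$-value into an oscillatory integral of $e^{\io((i_a-i_b)x+(i_{a+1}-i_{b+1})y)}$ against $f\,dx\,dy$ plus a contribution from the atoms of $\nu_d$.

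The second step is the combinatorial/analytic heart of the argument: identify which pair partitions contribute at the correct scale and verify that $\nu_d$ does not contribute to the limit. After rescaling indices via $i_r=\lfloor Ns_r\rfloor$, each term becomes a Riemann sum of an oscillatory integral in the $s_r$'s. Exactly as in the classical Wigner proof, a pair partition contributes at the full order $N^{1+k/2}$ only when its dual graph is a tree, i.e.\ when the partition is non-crossing; every crossing partition loses at least a factor of $N$. For the atoms of $\nu_d$ at points $(\xi,\eta)$, the associated factor $e^{\io\xi(i_a-i_b)}e^{\io\eta(i_{a+1}-i_{b+1})}$ is a pure trigonometric polynomial in the rescaled $s_r$, and the decay $N^{-1}\sum_{i=1}^N e^{\io i\xi}\to 0$ for $\xi\notin 2\pi\bbz$ reduces each such contribution by a factor of $N^{-1}$ or less after the Riemann sum is performed; the cases where atom frequencies conspire to cancel along the constraints of a non-crossing partition must be checked separately and absorbed into lower-order error. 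For each surviving non-crossing pair partition $\pi$, the Riemann sum then converges to an explicit multiple integral $m_k^{(\pi)}$ in $f$ alone, giving $\E\int x^k\,d\mu_N\to m_k := \sum_{\pi \text{ non-crossing}} m_k^{(\pi)}$, a quantity manifestly determined by $f$. The Gaussian tails yield a Carleman-type bound on $(m_k)$, so these moments determine a unique probability measure $\mu_f$.

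To upgrade convergence in mean to convergence in probability, I would estimate $\Var(\int x^k\,d\mu_N)$ by applying Isserlis once more to the square of the trace and organising the resulting pair partitions of $2k$ points. The partitions that factor across the two trace copies reproduce $(\E\int x^k\,d\mu_N)^2$, while the genuinely coupled partitions are smaller by a factor of $N^{-2}$ via the same order-of-magnitude accounting. Chebyshev's inequality then gives convergence of each moment in probability, and the deterministic bound $\esd(W_N/\sqrt N)([-t,t]^c)\le t^{-2}\cdot N^{-2}\E\Tr(W_N^2)=O(t^{-2})$ yields tightness, promoting convergence of moments to weak convergence in probability. The final claim on the role of $f$ is automatic from the construction, since the moments $m_k$ involve only $f$.

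The hardest step, I expect, is the second one: rigorously ruling out resonant contributions from the atoms of $\nu_d$. The delicate point is that atoms could sit at points whose frequencies satisfy non-trivial rational relations dictated by the combinatorics of a non-crossing pair partition, producing apparently non-oscillatory terms at what looks like leading order; showing that each such configuration nonetheless leaves at least one free oscillatory sum (so that the contribution is lower order) is the main technical obstacle, and it is precisely this phenomenon that allows $\nu_d$ to play a non-trivial role only at the coarser scale $W_N/N$ treated in Theorem~\ref{discrete.t1}.
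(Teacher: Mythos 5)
Your plan has a fatal structural problem: you apply the method of moments directly to $\mu_N=\esd(W_N/\sqrt N)$ and invoke a Carleman-type bound from ``Gaussian tails'' to conclude the limiting moments determine $\mu_f$. But the paper's Example~\ref{example3} exhibits a spectral density $f(x,y)=|xy|^{-1/2}$ for which $\int x^4\,\mu_f(dx)=\infty$. So the limiting measure need not have finite moments of any order $\ge 4$, the Carleman criterion is not applicable, and the expected moments $\E\int x^k\,d\mu_N$ will generally diverge. The paper acknowledges this explicitly (``the moments of the LSD need not be finite. Hence, some work is needed to get around that'') and circumvents it by first truncating the moving-average representation to a finite-range process $Y_{i,j,n}$ of \eqref{eq.defyijn}, for which all moments of the approximate LSD $\mu_{f,n}$ are finite and a method-of-moments argument (Proposition~\ref{ac.p1}) does go through; only afterwards is the $n\to\infty$ limit taken at the level of measures, using the Hoffman--Wielandt inequality to control L\'evy distance and Fact~\ref{f4} to exchange the $N$ and $n$ limits.

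Your treatment of the discrete component is also not the right route. You propose to show that the $\nu_d$-contributions are washed out by oscillatory cancellation $N^{-1}\sum_i e^{\io i\xi}\to 0$ inside the trace moments, and you correctly flag that resonances between atom frequencies along the constraints of a non-crossing partition are a serious obstacle. The paper sidesteps this entirely: it decomposes $X_{i,j}=Y_{i,j}+Z_{i,j}$ as in \eqref{eq15}, where $Z_{i,j}$ is the almost-periodic piece built from the atoms of $\nu_d$, and observes that the $N\times N$ matrix with entries $Z_{i,j,n}+Z_{j,i,n}$ has rank at most $8n$, uniformly in $N$. Fact~\ref{fact:rank} (the rank inequality for L\'evy distance of ESDs) then shows this term is a vanishing perturbation of the ESD without any Fourier analysis or resonance bookkeeping. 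The rank-perturbation observation is the key idea your proposal is missing: it explains cleanly why $\nu_d$ is invisible at the $\sqrt N$ scaling yet dominant at the $N$ scaling (finite rank plus entries of order one means eigenvalues of order $N$), which is exactly the dichotomy the paper is after. If you want to retain a moments-based outline, you would need to (i) introduce the truncation $Y_{i,j,n}$, compute moments only for the truncated model, and pass to the limit in L\'evy metric, and (ii) replace the oscillatory-cancellation argument for the atoms by the bounded-rank argument.
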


\begin{remark}
 The exact description of $\mu_f$ is complicated, and will come much later in Remark \ref{rem:muf}. However, it is shown in \cite{chakrabarty:hazra:2016} that $\mu_f$ is absolutely continuous with respect to the Lebesgue measure if
\[
 {ess\inf}_{(x,y)\in[-\pi,\pi]^2}\left[f(x,y)+f(y,x)\right]>0\,,
\]
where ``$ess\inf$'' denotes the essential infimum. 
\end{remark}

A natural question at this stage is ``When is the probability measure $\mu_f$ degenerate at zero?'' . \comment{Correction-1} 
The following result answers this question.

\begin{theorem}\label{t6}
 The second moment of the probability measure $\mu_f$ is given by
\[
 \int_\bbr x^2\mu_f(dx)=2\int_{[-\pi,\pi]^2} f(x,y)dxdy\,.
\]
\end{theorem}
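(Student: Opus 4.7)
\emph{Proof proposal.} The plan is to pass to an auxiliary matrix built only from the absolutely continuous component of the spectral measure, whose LSD is still $\mu_f$ by Theorem \ref{t1} but whose ESD has no eigenvalues escaping to infinity, and then compute the second moment directly on this auxiliary matrix. By the converse to the Herglotz theorem, one can construct independent real centered stationary Gaussian processes $\{Y_{j,k}\}$ and $\{Z_{j,k}\}$ on $\bbz^2$ with spectral measures $\nu_{ac}$ and $\nu_d$ respectively; since spectral measures add under independent summation, $Y+Z \eid X$. Define $W_N^Y(i,j) := Y_{i,j}+Y_{j,i}$. The process $Y$ satisfies the standing assumption \eqref{eq.assume} with absolutely continuous part carrying the same density $f$ and trivial discrete part, so Theorem \ref{t1} yields
\[
 \esd(W_N^Y/\sqrt N) \to \mu_f
\]
weakly in probability.

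Next, compute the expected second moment of this ESD directly. Letting $R_Y(u,v) = \E(Y_{0,0}Y_{u,v})$, so $R_Y(0,0) = \int_{[-\pi,\pi]^2} f$,
\[
 \int_\bbr x^2\,\esd(W_N^Y/\sqrt N)(dx) = \frac{1}{N^2}\Tr\bigl((W_N^Y)^2\bigr) = \frac{1}{N^2}\sum_{i,j=1}^N (Y_{i,j}+Y_{j,i})^2,
\]
and $\E[(Y_{i,j}+Y_{j,i})^2] = 2R_Y(0,0) + 2R_Y(j-i,i-j)$. The diagonal contribution gives exactly $2R_Y(0,0) = 2\int f$; the off-diagonal contribution reduces after collecting terms by $k = j-i$ to $\frac{2}{N}\sum_{|k|<N}(1-|k|/N)R_Y(k,-k)$, which is $\frac{2}{N}(F_N * h)(0)$ for $F_N$ the Fej\'er kernel and $h(u) = \int f(x,x-u)\,dx \in L^1$. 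Hence $(F_N * h)(0) = O(1)$ by Fej\'er's theorem, so the off-diagonal piece is $O(1/N) \to 0$ and $\E[\int x^2\,\esd(W_N^Y/\sqrt N)(dx)] \to 2\int f$.

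Combining this with the weak convergence and applying it to the truncations $x^2 \wedge M$, monotone convergence in $M$ together with Fatou yields the easy direction $\int x^2\,\mu_f(dx) \le 2\int f$. The reverse inequality is the main obstacle: it requires uniform integrability of $x^2$ under $\esd(W_N^Y/\sqrt N)$, equivalently that no mass of these ESDs escapes to infinity. The natural route is to control $\E[\frac{1}{N^3}\Tr((W_N^Y)^4)]$ uniformly in $N$ by expanding into closed $4$-cycles and applying Wick's formula to the Gaussian correlations, in the spirit of the classical Wigner cycle count but tracking correlations through the combinatorics. With this fourth-moment bound, uniform integrability follows and $\int x^2\,\mu_f = 2\int f$. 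This is precisely where the absence of $\nu_d$ in $Y$ is used: the eigenvalues of $W_N/\sqrt N$ of order $\sqrt N$ arising from the discrete spectrum (governed by Theorem \ref{discrete.t1}) are exactly what $Y$ lacks, so the mass $2\nu_d((-\pi,\pi]^2)$ that would otherwise be carried away to infinity is absent from the start.
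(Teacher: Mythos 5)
Your reduction to the process $Y$ carrying only the absolutely continuous spectrum, and your trace computation of $\E\bigl[\int x^2\,\esd(W_N^Y/\sqrt N)(dx)\bigr]$, mirror the paper's decomposition \eqref{eq15} and are essentially correct. (A small quibble: the assertion that $(F_N*h)(0)=O(1)$ ``by Fej\'er's theorem'' is not justified for a general $h\in L^1$ --- $F_N*h$ need not be bounded at a point that is not a Lebesgue point --- but the cross-term still vanishes because $R_Y(k,-k)\to0$ by Riemann--Lebesgue, so the Ces\`aro average $\frac1N\sum_{|k|<N}(1-|k|/N)R_Y(k,-k)\to0$.) Your argument for the easy inequality $\int x^2\mu_f\,\le\,2\int f$ via truncation and Fatou is also fine.

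The genuine gap is in the proposed route to the reverse inequality. You plan to establish uniform integrability of $x^2$ under $\esd(W_N^Y/\sqrt N)$ by bounding $\E\bigl[N^{-3}\Tr\bigl((W_N^Y)^4\bigr)\bigr]$ uniformly in $N$, but this bound is \emph{false} in general. Example~\ref{example3} of the paper has $f(x,y)=|xy|^{-1/2}$, so $\|f\|_1<\infty$ and the standing hypotheses hold, yet $\int_\bbr x^4\,\mu_f(dx)=\infty$. Since (along a subsequence on which the ESDs converge weakly a.s.) Fatou gives $\int x^4\mu_f\le\liminf_N N^{-3}\Tr\bigl((W_N^Y)^4\bigr)$, the expected fourth moments must diverge for this $f$. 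Thus your approach is not merely unwritten --- it breaks precisely for the spectral densities the theorem is supposed to cover. Your closing remark attributes the escape of mass to the discrete spectrum, but Example~\ref{example3} has $\nu_d\equiv0$; the obstruction you face comes from $f$ itself being unbounded, not from $\nu_d$.

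The paper sidesteps this by truncating the moving-average representation first: it works with $Y_{i,j,n}$ as in~\eqref{eq.defyijn}, a finite sum, so the matrix $W_{N,n}$ has fourth moments converging to $\beta_{n,4}<\infty$ for each fixed $n$, and uniform integrability holds \emph{at fixed $n$}. On a master probability space it represents $\esd(W_{N_k,n}/\sqrt{N_k})$ by its quantile function $\chi_{k,n}$, upgrades a.s.\ convergence $\chi_{k,n}\to\chi_{\infty,n}$ to $L^2$ convergence via this UI, then controls the error $\chi_{k,n}-\chi_{k,\infty}$ by Hoffman--Wielandt: $\E[(\chi_{k,n}-\chi_{k,\infty})^2]\le C\sum_{|i|\vee|j|>n}c_{i,j}^2$, which is small uniformly in $k$ once $n$ is large. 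Fact~\ref{f4} then interchanges the two limits and identifies $\int x^2\mu_f=\lim_n\int x^2\mu_{f,n}=\lim_n 2\E(Y_{0,0,n}^2)=2\int f$. The crucial ingredient you lack is exactly this two-parameter truncation scheme combined with the Hoffman--Wielandt bound, which replaces the (unavailable) uniform fourth-moment control by an $L^2$ comparison of eigenvalue profiles.
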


The next result relates some more properties of $\mu_f$ with those of $f$. 

\begin{theorem}\label{t2} For $m\ge2$, the $(2m)$-th moment of $\mu_f$ is finite if  $\|f\|_m<\infty$. Here $\|f\|_p$ denotes the $L^p$ norm of $f$ for all $p\in[1,\infty]$.
\end{theorem}

In the situation when $f$ is essentially bounded above, $\mu_f$ can be understood better, as illustrated in the next two results. Some new notations will be needed for stating those results, which we now introduce. 

For $m\ge 1$, denote by $NC(2m)$  the set of non-crossing partitions of $\{1,2,\cdots, 2m\}$ and let it be equipped with the partial order $\preccurlyeq$, where $\pi\preccurlyeq \sigma$ means  every block of $\pi$ is completely contained in a block of $\sigma$, making $NC(2m)$ a lattice. An element $a$ of a subset $A$ of $NC(2m)$ is maximal if
\[
b\preccurlyeq a\text{ for all }b\in A\,.
\]  
Let $\pi$ be a partition in $NC(2m)$.  Then the Kreweras complement $K(\pi)$ is defined as the maximal element $\sigma \in NC(\{\overline 1,\ldots,\overline {2m}\})$ with the property that $\pi\cup\sigma \in NC(\{1,\overline1,\ldots,2m,\overline{2m}\})$.\newcomment{Comment 2}
In other words, for all $\sigma^\prime\in NC(\{\ol1,\ldots,\ol{2m}\})$ such that $\pi\cup\sigma^\prime\in NC(\{1,\ol1,\ldots,2m,\ol{2m}\})$, it is necessary that
\[
\sigma^\prime\preccurlyeq K(\pi)\,.
\]
It can be shown that for any $\pi \in NC(2m)$, $|\pi|+ |K(\pi)|= 2m+1$. 

Fix $m\ge1$ and $\sigma\in NC_2(2m)$, the set of non-crossing pair partitions of\\ \label{pg.combinatorics}
$\{1,\ldots,2m\}$. Let $K(\sigma):=(V_1,\ldots,V_{m+1})$ denote the Kreweras
complement of $\sigma$.
In order to ensure uniqueness in the notation, we impose the requirement that the blocks $V_1,\ldots,V_{m+1}$ are ordered in the following way. If $1\le i<j\le m+1$, then the {\bf maximal} element of $V_i$ is strictly less than that of $V_j$. Let $\mathcal T_\sigma$ be the unique  function from $\{1,\ldots,{2m}\}$ to $\{1,\ldots,m+1\}$ satisfying
\begin{equation}\label{eq.defTsigma}
\ol i\in V_{\mathcal T_\sigma(i)},\,1\le i\le2m\,.
\end{equation}
For example, if
\[
 \sigma:=\{(1,4),(2,3),(5,6)\}\,,
\]
\comment{correction-2(b)}
then the Kreweras complement is given by 
\[
K(\sigma)=\{(\ol1,\ol3), (\ol2), (\ol4,\ol6), (\ol5)\}\,,
\]
and hence the ordered blocks are $V_1=(\ol2)$, $V_2=(\ol1,\ol3)$, $V_3=(\ol5)$ and $V_4=(\ol4,\ol6)$. So we have $\mathcal T_\sigma(1) = 2, \mathcal T_\sigma(2) = 1, \mathcal
T_\sigma(3) = 2, \mathcal T_\sigma(4) = 4, \mathcal T_\sigma(5) = 3,
\mathcal T_\sigma(6) = 4$. For details of properties of Kreweras complement we refer to \cite[Chapter 9]{nica:speicher:2006}. Chapter 22 of the said reference uses non-crossing pair partitions and Kreweras complements in the context of random matrices. \comment{correction-2(c)}  

For  any function $f$ from $[-\pi,\pi]^2$ to $\bbr$ and any $\sigma \in NC_2(2m)$ define the function $L_{\sigma,f}$ from $[-\pi,\pi]^{m+1}$ to $\bbr$ by
\[
 L_{\sigma,f}(x):=\prod_{(u,v)\in\sigma}\left[f\left(x_{\mathcal T_\sigma(u)},-x_{\mathcal T_\sigma(v)}\right)+f\left(-x_{\mathcal T_\sigma(v)},x_{\mathcal T_\sigma(u)}\right)\right]\,,
\]
for all $x\in[-\pi,\pi]^{m+1}$.

\begin{theorem}\label{theorem.new}
 Assume that $\|f\|_\infty<\infty$. Then, the following are true.\\
1. The support of $\mu_f$ is contained in $[-\bar R,\bar R]$ where
\[
 \bar R:=4\sqrt2\,\pi\sqrt{\|f\|_\infty}\,.
\]
2. The $(2m)$-th moment of $\mu_f$ is given by 
\[
 \int_\bbr x^{2m}\mu_f(dx)=(2\pi)^{m-1}\sum_{\sigma\in NC_2(2m)}\int_{[-\pi,\pi]^{m+1}}L_{\sigma,f}(x)dx\text{ for all }m\ge1\,.
\]
\end{theorem}

\begin{remark}
It is shown in Example \ref{example1} that the converses of Theorem \ref{t2} and the first statement in Theorem \ref{theorem.new} are false. That is, finiteness of the $(2m)$-th moment of $\mu_f$ does not imply that $\|f\|_m<\infty$ for any $m\ge2$, and $\mu_f$ being compactly supported does not imply that $f$ is essentially bounded above.
\end{remark}

Let  $\|f\|_\infty<\infty$.
The Stieltjes transform $\mathcal G$ of the LSD $\mu_f$ is defined by
\[
 {\mathcal G}(z):=\int_\bbr\frac1{z-x}\mu_f(dx),\,z\in\bbc,|z|>\bar R\,,
\]
where $\bar R$ is as in Theorem \ref{theorem.new}. In view of the first claim of Theorem \ref{theorem.new}, the above integral makes sense.
The next result describes the Stieltjes transform of $\mu_f$.

\begin{theorem}\label{thm.stieltjes}
Assume that $\|f\|_\infty<\infty$. Define ${\mathcal D}:=\{z\in\bbc:|z|>\bar R\}\times[0,1]$.
Then, there exists a unique function $\mathcal H$ from $\mathcal D$ to $\bbc$ such that
\begin{enumerate}
\item for all fixed $x$, ${\mathcal H}(\cdot,x)$ is analytic on $[|z|>\bar R]$,
\item for all fixed $z$, ${\mathcal H}(z,\cdot)$ is Lebesgue integrable on $[0,1]$,
\item for all $(z,x)\in\mathcal D$,
\begin{equation}\label{thm.stieltjes.claim1}
 z{\mathcal H}(z,x)=1+4\pi^2{\mathcal H}(z,x)\int_{-\pi}^\pi {\mathcal H}(z,y)\left[f(x,-y)+f(-y,x)\right]dy\,,
\end{equation}
\item and, for all $x\in[0,1]$,
\[
 \lim_{|z|\to\infty}z{\mathcal H}(z,x)=1\,.
\]
Furthermore,
\begin{equation}\label{thm.stieltjes.claim2}
 {\mathcal G}(z)=\int_{-\pi}^\pi{\mathcal H}(z,x)dx\text{ for all }|z|>\bar R\,.
\end{equation}
\end{enumerate}

\end{theorem}

The last two results of this subsection give neat descriptions of $\mu_f$ in two special cases. In what follows, $WSL(\gamma)$ for $\gamma>0$ denotes the Wigner semicircle law with variance $\gamma$, that is, it is the law whose density is
\begin{equation}\label{eq.defwsl}
 \frac1{2\pi\sqrt\gamma}\sqrt{4-x^2/\gamma\,}\,\one(|x|\le2\sqrt\gamma)\,.
\end{equation}

\begin{theorem}\label{t3}
 If there exists a function $r$ from $[-\pi,\pi]$ to $[0,\infty)$ such that
 \[
\frac12\left[  f(x,y)+f(y,x)\right]=r(x)r(y)\text{ for almost all }x,y\in[-\pi,\pi]\,,
 \]
then
\[
 \mu_f=\eta_r\boxtimes WSL(1)\,,
\]
where $\eta_r$ denotes the law of $2^{3/2}\pi r(U)$, $U$ is a $ Uniform(-\pi,\pi)$ random variable, and `` $\boxtimes$'' denotes the free multiplicative convolution.
\end{theorem}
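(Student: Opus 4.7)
The plan is to identify $\mu_f$ with $\eta_r\boxtimes WSL(1)$ by the method of moments. I would first treat the bounded case $\|f\|_\infty<\infty$, in which Theorem~\ref{t2}.2 gives a combinatorial moment formula for $\mu_f$ and both measures are compactly supported, and then extend by approximating $r$ by $r\wedge n$. A preliminary simplification is that $r$ is even a.e.: the reality of the process forces $f(-x,-y)=f(x,y)$, which combined with the factorization gives $r(x)r(y)=r(-x)r(-y)$ a.e., hence $r(-x)=r(x)$ a.e. (possibly after modifying $r$ on a null set, which does not affect $\eta_r$).

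Fix $\sigma\in NC_2(2m)$ and write $V_1,\ldots,V_{m+1}$ for the blocks of its Kreweras complement. Plugging the factorization into $L_{\sigma,f}$, each bracket collapses to $2\,r(x_{\mathcal T_\sigma(u)})r(x_{\mathcal T_\sigma(v)})$ using the evenness of $r$, and since each $i\in\{1,\ldots,2m\}$ occurs in exactly one pair of $\sigma$, the pair product regroups as
\[
L_{\sigma,f}(x)=2^m\prod_{i=1}^{2m}r\bigl(x_{\mathcal T_\sigma(i)}\bigr)=2^m\prod_{j=1}^{m+1}r(x_j)^{|V_j|}.
\]
Substituted into Theorem~\ref{t2}.2, this writes the $(2m)$-th moment of $\mu_f$ as a universal constant times $\sum_\sigma\prod_j\int_{-\pi}^{\pi}r(u)^{|V_j|}du$.

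For the free-probability side, I would realize $\eta_r\boxtimes WSL(1)$ as the law of $\sqrt{a}\,s\sqrt{a}$, where $a\sim\eta_r$ is free from a standard semicircular $s$. Traciality gives $\phi((\sqrt{a}\,s\sqrt{a})^{2m})=\phi((as)^{2m})$, and the Kreweras-complement moment formula — derived from the moment-cumulant expansion using vanishing of mixed free cumulants of $\{a\}$ and $\{s\}$ and that only $\kappa_2(s,s)=1$ survives among free cumulants of $s$ — gives
\[
\phi((as)^{2m})=\sum_{\sigma\in NC_2(2m)}\prod_{V}\phi\bigl(a^{|V|}\bigr),
\]
with $V$ ranging over the blocks of the Kreweras complement of $\sigma$. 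Substituting the moments $\phi(a^k)=\E[(2^{3/2}\pi r(U))^k]$ of $\eta_r$ and collecting the constants produces exactly the same expression as for $\mu_f$. Odd moments vanish on both sides, since $NC_2(2k+1)=\emptyset$.

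The hard part is really bookkeeping: checking that the $2m$ factors $r(x_{\mathcal T_\sigma(i)})$ regroup block-by-block into $\prod_j r(x_j)^{|V_j|}$ (which uses only that $\mathcal T_\sigma$ is the block-indicator of the Kreweras complement, together with evenness of $r$), and that the collected powers of $2$ and $\pi$ match on both sides. Once the moments agree, moment-determinacy in the compactly supported case identifies the two laws. To extend beyond $\|f\|_\infty<\infty$, I would replace $r$ by $r_n:=r\wedge n$, apply the bounded case to the symmetric density $r_n(x)r_n(y)$, and pass to the limit using Theorem~\ref{t1} on the random-matrix side and weak continuity of $\eta\mapsto\eta\boxtimes WSL(1)$ on the free-probability side.
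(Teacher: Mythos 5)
Your route is genuinely different from the paper's and is correct in outline, and your combinatorial bookkeeping does check out: using evenness of $r$, each factor of $L_{\sigma,f}$ collapses to $2\,r(x_{\mathcal T_\sigma(u)})r(x_{\mathcal T_\sigma(v)})$, regroups to $2^m\prod_j r(x_j)^{|V_j|}$, and the constants $(2\pi)^{m-1}\cdot 2^m = 2^{2m-1}\pi^{m-1}$ match $(2^{3/2}\pi)^{2m}(2\pi)^{-(m+1)}$ on the free side. The paper does not use Theorem \ref{t2} in this proof at all; instead it approximates $\sqrt{g}$ by trigonometric polynomial partial sums, which makes the truncated spectral density a trigonometric polynomial, so that Lemma \ref{ac.l2} (the finite-range version, proved from scratch via the moment formula of Lemma \ref{ac.l5}) applies. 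Your approach buys a shorter argument by treating the moment formula of Theorem \ref{t2}.2 as a black box and truncating $r\wedge n$ rather than Fourier-truncating; the paper's approach has the advantage of being self-contained within the Fourier/linear-process machinery built for Theorem \ref{t1} and is closer in spirit to the proof of Theorem \ref{t4}, which uses the same Lemma \ref{ac.l5} / Lemma \ref{ac.l3} pipeline.

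Two points need tightening. First, on the matrix side, ``pass to the limit using Theorem~\ref{t1}'' is not quite enough: Theorem \ref{t1} only gives existence of the LSD; to get $\mu_{r_n(\cdot)r_n(\cdot)}\weak\mu_g$ you need the $L^1$-continuity statement (Lemma \ref{ac.l3}), and to conclude $\mu_g=\mu_f$ you need Lemma \ref{ac.l4}. Both are available, but they should be invoked explicitly. Second, and more substantively, ``weak continuity of $\eta\mapsto\eta\boxtimes WSL(1)$'' is not a freebie: free multiplicative convolution is \emph{not} weakly continuous in general for measures with unbounded support, and this is exactly where the paper's proof leans on Corollary 6.7 of \cite{bercovici:voiculescu:1993} together with Lemma 8 of \cite{arizmendi:abreu:2009}. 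Your truncation $r\wedge n$ is monotone and dominated by $r$, which should bring you within reach of those results, but the appeal to them must be made explicit --- as written, this step is a genuine gap rather than mere bookkeeping.
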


\begin{theorem}\label{t4}
 Define
\[
 d_{j,k}:=\frac1{2\sqrt2\,\pi}\int_{[-\pi,\pi]^2} e^{-\io(jx+ky)}\sqrt{f(x,y)+f(y,x)}dxdy,\,j,k\in\bbz\,.
\]
Suppose for some increasing sequence of finite subsets $A_1A_2,\ldots$ of $\bbz$ such that $\bigcup_{n\ge1} A_n=\bbz$,\comment{correction-3} 
it holds that\newcomment{Comment 3}
\[
 \sum_{k,l\in\bbz}d_{k,l}d_{j+k,l}\one(k,l,j+k\in A_n)=0\text{ for all }j\in\bbz\setminus\{0\}\text{ and }n\ge1\,.
\]
Then
\[
 \mu_f=WSL(2\|f\|_1)\,.
\]
\end{theorem}

\subsection{The eigen measure}\label{subsec:em}

Theorem \ref{t1} shows that the discrete component of the spectral measure does not have a bearing on the limiting behavior of the $\esd$. Therefore, it is imperative to come up with a variant of the $\esd$ that would capture the role of this component. That end is achieved in this subsection. The first task is to define the proper variant, which we now proceed towards.

It should be remembered that a symmetric matrix always means an $N\times N$ symmetric matrix for some finite $N$.\comment{correction-5}
A symmetric matrix $A$ is to be thought of as a Hermitian operator $\ol A$ of finite rank acting on the first $N$ coordinates of $l^2$, where
\[
 l^p:=\left\{(a_n:n\in\bbn)\subset\bbr:\sum_n|a_n|^p<\infty\right\},\,p\in[1,\infty)\,.
\]
If $\lambda_1\le\ldots\le\lambda_N$ are the eigenvalues of $A$ counted with multiplicity, then the spectrum of $\ol A$ is $\{0,\lambda_1,\ldots,\lambda_N\}$, where $0$ has infinite multiplicity. Motivated by this, we define the {\bf eigen measure} of $A$, denoted by $\eim(A)$, by
\[
 \eim(A):=\infty\delta_0+\sum_{j=1}^N\delta_{\lambda_j}\,.
\]
The measure $\eim(A)$ is to be viewed as an element of the set $\mathcal P$ of point measures $\xi$ of the form
\[
 \xi:=\infty\delta_0+\sum_{j=1}^\infty\delta_{\theta_j}\,,
\]
where $(\theta_j:j\ge1)$ is some sequence of real numbers. It is not hard to see why $\eim(A)$ is an element of $\mathcal P$ for a symmetric matrix $A$ because $\theta_j$ can be taken to be zero after a stage. For $p\in[1,\infty)$, define a subfamily $\mathcal C_p$ of $\mathcal P$ by
\[
 \mathcal C_p:=\left\{\mu\in\mathcal P:\int_\bbr|x|^p\mu(dx)<\infty\right\}\,.
\]
Once again, it is easy to see that for any symmetric matrix $A$,
\[
 \eim(A)\in\mathcal C_p\text{ for all }p\ge1\,.
\]

Fix $p\ge1$ and $\xi\in\mathcal C_p$. Clearly, there exist unique real numbers
\[
 \alpha_1(\xi)\ge\alpha_2(\xi)\ge\ldots\ge0\,,
\]
and
\[
 \alpha_{-1}(\xi)\le\alpha_{-2}(\xi)\le\ldots\le0\,,
\]
such that
\[
 \xi=\infty\delta_0+\sum_{j\neq0}\delta_{\alpha_j(\xi)}\,,
\]
where $\sum_{j\neq0}$ means the sum over all non-zero integers. Define
\[
 d_p(\xi_1,\xi_2):=\left[\sum_{j\neq0}|\alpha_j(\xi_1)-\alpha_j(\xi_2)|^p\right]^{1/p},\,\xi_1,\xi_2\in\mathcal C_p\,.
\]
Given the natural bijection between $\mathcal C_p$ and $l^p$, it is immediate that $(\mathcal C_p,d_p)$ is a complete metric space. Note that
\begin{equation}\label{discrete.eq.bound}
 \left|\left[\int_\bbr |x|^p\xi_1(dx)\right]^{1/p}-\left[\int_\bbr |x|^p\xi_2(dx)\right]^{1/p}\right|\le d_p(\xi_1,\xi_2),\,\xi_1,\xi_2\in\mathcal C_p\,.
\end{equation}
\comment{correction-4} 
It is worth mentioning in this context that the metric defined above is very similar to ``$\delta_2$'' studied in \cite{koltchinskii:gine:2000} and \cite{adamczak:bednorz:2015}.

The main result of this subsection is the following.

\begin{theorem}\label{discrete.t1}
Under the assumption \eqref{eq.assume}, there exists a random point measure $\xi$ which is almost surely in ${\mathcal C}_2$ such that
\begin{equation}\label{discrete.t1.eq4}
 d_4\left(\eim(W_N/N),\xi\right)\prob0\,,
\end{equation}
as $N\to\infty$, where $W_N$ is as defined in \eqref{eq.defan}. Furthermore, the distribution of $\xi$ is determined by $\nu_d$.
\end{theorem}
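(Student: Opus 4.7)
The plan is to split $W_N$ according to the spectral decomposition of the underlying process. By the spectral representation theorem and assumption \eqref{eq.assume}, I would write $X_{j,k}=X^{(ac)}_{j,k}+X^{(d)}_{j,k}$ as a sum of two independent real stationary Gaussian processes with spectral measures $\nu_{ac}$ and $\nu_{d}$; this induces the decomposition $W_N=W_N^{(ac)}+W_N^{(d)}$. Listing the atoms of $\nu_{d}$ in conjugate pairs $\{(\theta_l,\phi_l),(-\theta_l,-\phi_l)\}$ of mass $c_l$, one obtains
\[
X^{(d)}_{j,k}=\sum_l \bigl(Z_l\, e^{\io(j\theta_l+k\phi_l)}+\overline{Z_l}\, e^{-\io(j\theta_l+k\phi_l)}\bigr),
\]
where the $Z_l$ are mutually independent complex (real at the fixed points of $(\theta,\phi)\mapsto(-\theta,-\phi)$) Gaussians of variance $c_l$. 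Writing $e^{(\alpha)}_i:=e^{\io i\alpha}$, the contribution of the $l$-th atom-pair to $W_N^{(d)}$ is then a random symmetric matrix of rank at most $4$, supported on the real span of $\Re e^{(\theta_l)},\Im e^{(\theta_l)},\Re e^{(\phi_l)},\Im e^{(\phi_l)}$.

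\textbf{Convergence of the truncated discrete part.} For each $K\ge 1$, let $W_N^{(d,K)}$ be the contribution of the $K$ atom-pairs of largest mass; it has rank $O(K)$. The asymptotic orthogonality $\langle e^{(\alpha)},e^{(\beta)}\rangle/N\to \one(\alpha=\beta)$ for $\alpha,\beta\in(-\pi,\pi]$ implies that the rank-$\le 4$ blocks arising from distinct atom-pairs become asymptotically orthogonal after the natural $1/\sqrt N$ rescaling of the defining vectors. Diagonalising each block explicitly in the real orthonormal basis just described shows that the nonzero eigenvalues of $W_N^{(d,K)}/N$ converge in joint distribution, as $N\to\infty$, to an explicit multiset depending only on $(c_l,\theta_l,\phi_l,Z_l)_{l\le K}$; a generic atom-pair contributes the four eigenvalues $\pm|Z_l|$ (each with multiplicity two), while boundary cases like $\theta_l=-\phi_l$ or $\theta_l,\phi_l\in\{0,\pi\}$ produce correspondingly reduced-rank contributions. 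Denote this limit by $\xi^{(K)}\in\mathcal C_4$.

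\textbf{Negligibility of the remainder.} To transfer convergence to $\eim(W_N/N)$ itself, combine the Hoffman--Wielandt inequality with the elementary bound $\|x\|_4^4\le \|x\|_2^2\|x\|_\infty^2$ to obtain
\[
d_4\bigl(\eim(A),\eim(A+B)\bigr)^4 \le \|B\|_F^2\,\|B\|_{op}^2
\]
for symmetric matrices $A,B$. I apply this with $A=W_N^{(d,K)}/N$ and $B=(W_N^{(ac)}+W_N^{(d)}-W_N^{(d,K)})/N$. The Frobenius norm of $B$ is controlled in $L^2$ by $\int f+\sum_{l>K}c_l$ via a direct variance computation, and the discrete tail vanishes as $K\to\infty$. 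For the operator norm, the dropped discrete atoms contribute $O_p\!\left(\sqrt{\sum_{l>K} c_l}\right)$ by a rank-$1$-times-modulus estimate, while the ac part requires a further approximation: truncate $f$ at height $T$, so that the bounded-density piece behaves as a Wigner-type matrix with operator norm $O(\sqrt N)$ (hence $O(1/\sqrt N)$ after $1/N$ scaling, which can be obtained by either a fourth-moment bound on the trace or by Gaussian concentration once $f\in L^\infty$), and the $L^1$-small residual $f-f\wedge T$ is absorbed into the Frobenius term.

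\textbf{Conclusion and main obstacle.} A $(K,T)$-diagonal-limit argument then yields $d_4(\eim(W_N/N),\xi^{(K)})\prob 0$ together with $\xi^{(K)}\to\xi$ in $\mathcal C_4$ (hence in $\mathcal C_2$) to a random limit $\xi$, whence $d_4(\eim(W_N/N),\xi)\prob 0$; completeness of $(\mathcal C_4,d_4)$ legitimises the Cauchy extraction, and independence of the $Z_l$ across $l$ shows the distribution of $\xi$ is a functional of $\nu_d$ alone. The delicate step, I expect, is the operator-norm control on $W_N^{(ac)}/N$ for a general $f\in L^1$: absent any bound on the tail of $f$, one has no off-the-shelf concentration for the largest eigenvalue, so the double truncation in $T$ and in $K$ must be calibrated carefully so that the Frobenius and operator error terms shrink simultaneously in probability.
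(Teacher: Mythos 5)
Your decomposition into independent Gaussian pieces with spectral measures $\nu_{ac}$ and $\nu_d$, and your atom-by-atom analysis of the discrete part as a sum of low-rank blocks, both agree in spirit with the paper. The substantive divergence is in how the absolutely continuous contribution is shown to be negligible after the $1/N$ scaling, and here the paper has a decisively simpler route that sidesteps exactly the obstacle you flag.

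The paper uses the Schatten-type bound (Fact \ref{discrete.f3}): $d_4^4\bigl(\eim(A),\eim(B)\bigr)\le\Tr\bigl[(A-B)^4\bigr]$, applied directly to $A-B=W_{N,n}/N$ where $W_{N,n}$ is the ac matrix built from Fourier-truncated coefficients. Since Proposition \ref{ac.p1} already establishes $N^{-3}\E\Tr(W_{N,n}^4)\to\beta_{n,4}<\infty$, one immediately gets $\E\Tr\bigl[(W_{N,n}/N)^4\bigr]=N^{-1}\cdot N^{-3}\E\Tr(W_{N,n}^4)\to 0$. No operator-norm control is needed at all; the extra factor of $N^{-1}$ coming from the ``wrong'' scaling $1/N$ vs.\ $1/\sqrt N$ does all the work. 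Your inequality $d_4^4\le\|B\|_F^2\|B\|_{op}^2$ is a valid but lossier consequence of the same $\Tr(B^4)$ bound, and it routes through $\|B\|_{op}$ — which, as you correctly sense, is genuinely hard for a generic $f\in L^1$ and requires a secondary height-truncation $T$ of the density plus a concentration argument even in the bounded case. The paper's truncation is in the Fourier domain (finite-range coefficients), not in the density level-set, which is what makes the moment method of Proposition \ref{ac.p1} directly applicable without introducing a $T$-parameter or any largest-eigenvalue estimate.

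A secondary point on the discrete part: your claim that a generic atom-pair contributes the four eigenvalues $\pm|Z_l|$ each with multiplicity two is an oversimplification. The matrix $W_N$ involves both $\cos(ix_l+jy_l)$ and the transposed $\cos(jx_l+iy_l)$, and the asymptotic Gram matrix of the $\cos/\sin$ vectors (the paper's $C_n$, computed via Fact \ref{f10}) has nontrivial cross terms when $x_l=\pm y_l$ or when some atom coordinates coincide; the resulting spectrum of $C_n^{1/2}PC_n^{1/2}$ is therefore not a union of independent $\{\pm|Z_l|\}$ quadruples. The paper's linear-algebraic device — writing $\widetilde W_{N,n}=B_NPB_N^T$ and invoking Facts \ref{discrete.f1}, \ref{discrete.f2} — handles all these degeneracies uniformly without needing case distinctions, and yields $\xi_n=\eim(C_n^{1/2}PC_n^{1/2})$ directly. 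Your asymptotic-orthogonality heuristic is essentially Fact \ref{f10} in disguise, but the explicit eigenvalue picture needs the Gram-matrix formulation to be correct in the degenerate cases.

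Finally, both approaches correctly invoke a double-limit completeness argument (your $(K,T)$-diagonal, the paper's Fact \ref{f4} in $n$) to extract the limit $\xi$. The paper also takes one extra care: it needs $d_4$ rather than $d_2$ precisely because the $\Tr[(W_{N,n}/N)^2]$ does \emph{not} go to zero (it is $O(1)$), whereas the fourth power does. Your $\|B\|_F^2\|B\|_{op}^2$ bound also captures this, so you have the right exponent, but it is worth recognising this is the reason $d_4$ appears in the statement at all.
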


\begin{remark}
 It is trivial to see that ${\mathcal C}_2\subset{\mathcal C}_4$, and hence one can talk  about the $d_4$ distance between two point measures in ${\mathcal C}_2$.
\end{remark}

\begin{remark}
 There is a notion of convergence different from that in \eqref{discrete.t1.eq4}, namely ``vague convergence''. Suppose that $(\xi_n:1\le n\le\infty)$ are measures on $\bbr$ such that
\[
 \xi_n(\bbr\setminus(-\vep,\vep))<\infty\text{ for all }\vep>0,1\le n\le\infty\,.
\]
Then $\xi_n$ converges vaguely to $\xi_\infty$ if for all $x<0<y$ with $\xi_\infty(\{x,y\})=0$, it holds that
\[
 \lim_{n\to\infty}\xi_n(\bbr\setminus(x,y))=\xi_\infty(\bbr\setminus(x,y))\,.
\]
The vague convergence defined above is same as the vague convergence on $[-\infty,\infty]\setminus\{0\}$ discussed on page 171 in \cite{resnick:2007}, for example. It can be proved without much difficulty that if $(\xi_n:1\le n\le\infty)\subset\mathcal C_p$ for some $p$ such that
\begin{equation}\label{eq.conv}
 \lim_{n\to\infty}d_p(\xi_n,\xi_\infty)=0\,,
\end{equation}
then $\xi_n$ converges to $\xi_\infty$ vaguely. The converse is, however, not true, that is, \eqref{eq.conv} is strictly stronger than vague convergence.
\end{remark}

If Theorem \ref{discrete.t1} is seen as an analogue of Theorem \ref{t1}, then the next natural question should be the analogue of that answered in Theorem \ref{t6}, namely whether $\xi$ restricted to $\bbr\setminus\{0\}$ is non-null and necessarily random. Both these questions are answered in the affirmative in the case when $\nu_d((-\pi,\pi]^2)>0$ by the following result.

\begin{theorem}\label{discrete.t2}
 If $\nu_d((-\pi,\pi]^2)>0$, then the random variable
\[
 \int_\bbr x^2\xi(dx)
\]
is positive almost surely, and non-degenerate, that is, it has a positive variance.\comment{correction-6}
\end{theorem}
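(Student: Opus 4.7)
The strategy is to isolate the contribution of a single atom of $\nu_d$, whose Gaussian coefficients alone will force $\int_\bbr x^2\xi(dx)$ to be almost surely positive and non-degenerate. Fix an atom $(x_0,y_0)\in(-\pi,\pi]^2$ of $\nu_d$ with mass $c>0$; since $X$ is real, $\nu$ is invariant under $(x,y)\mapsto(-x,-y)$, so $(-x_0,-y_0)$ is also an atom of the same mass (unless $(x_0,y_0)$ is self-conjugate). Decompose $X=Y+Z$ into two independent centered Gaussian fields whose spectral measures are $c\delta_{(x_0,y_0)}+c\delta_{(-x_0,-y_0)}$ and $\nu$ minus this mass, respectively; correspondingly $W_N=W_N^Y+W_N^Z$ is a sum of two independent symmetric matrices.

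The field $Y$ admits the explicit representation $Y_{j,k}=\sqrt{2c}\bigl(A\cos(x_0 j+y_0 k)+B\sin(x_0 j+y_0 k)\bigr)$, with $A,B$ i.i.d.\ $N(0,1)$ (with a one-Gaussian simplification in the self-conjugate case). Using the Fourier vectors $w^{(\theta)}:=(e^{\io\theta k})_{k=1}^N$, which are asymptotically orthogonal with $\|w^{(\theta)}\|^2=N$, a direct computation shows that $W_N^Y$ has rank at most $4$, and diagonalising in this basis yields that the non-zero eigenvalues of $W_N^Y/N$ converge almost surely to explicit nonzero multiples of $\sqrt{A^2+B^2}$. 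Theorem~\ref{discrete.t1} applied to $Y$ itself (its spectral measure trivially satisfies \eqref{eq.assume}) gives $\eim(W_N^Y/N)\to\xi^Y$ in $d_4$, and $\int_\bbr x^2\xi^Y(dx)$ equals a positive constant (depending only on $c$ and $(x_0,y_0)$) times $A^2+B^2$, which is almost surely positive and non-degenerate.

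It remains to transfer this lower bound to $\xi$. The natural route is Weyl's inequality $|\lambda_j(W_N/N)-\lambda_j(W_N^Y/N)|\le\|W_N^Z/N\|$ combined with $\|W_N^Z/N\|\to 0$ in probability---the latter being valid, for example, when $f\in L^\infty$, via the standard Wigner-type bound $\|W_N^Z/\sqrt N\|=O(1)$. This yields $\lambda_{\max}(W_N/N)\to\lambda_{\max}(W_N^Y/N)$ in probability, forcing $\alpha_1(\xi)$ to dominate a positive multiple of $\sqrt{A^2+B^2}>0$ a.s., and hence $\int_\bbr x^2\xi(dx)\ge\alpha_1(\xi)^2$ inherits both a.s.\ positivity and non-degeneracy. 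A more intrinsic alternative is to use the proof of Theorem~\ref{discrete.t1} directly to realise $\xi$ as the eigen measure of a limiting self-adjoint operator that splits into asymptotically orthogonal contributions indexed by the atoms of $\nu_d$, which immediately gives $\int x^2\xi\ge\int x^2\xi^Y$. The main obstacle is making either argument rigorous without assuming $f\in L^\infty$; this likely requires either a truncation argument, approximating $\nu_{ac}$ by $(f\wedge M)\,dx\,dy$ and letting $M\to\infty$, or directly unpacking the operator-theoretic decomposition implicit in the proof of Theorem~\ref{discrete.t1}.
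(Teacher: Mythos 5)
Your intuition---isolate the Gaussian coefficients attached to a single atom of $\nu_d$ and show that $\int x^2\xi$ inherits their non-degeneracy---is exactly the right one, and it is also the heart of the paper's argument. However, both routes you sketch have real gaps, and neither is what the paper does.

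The Weyl-inequality route requires $\|W_N^Z/N\|\to 0$, where $Z$ is the field with spectral measure $\nu$ minus the mass of the chosen atom. You flag the absolutely continuous part as the obstacle (hence the $f\in L^\infty$ caveat), but you overlook the more serious one: $Z$ still contains every \emph{other} atom of $\nu_d$, and the matrix built from those atoms has operator norm of order $N$, not $\sqrt N$. So $\|W_N^Z/N\|$ does \emph{not} tend to zero whenever $\nu_d$ has more than one (conjugate pair of) atom(s), regardless of $f$. Separately, even in a situation where the Weyl step did go through, your conclusion ``$\alpha_1(\xi)$ dominates a positive multiple of $\sqrt{A^2+B^2}$, hence $\int x^2\xi\ge\alpha_1(\xi)^2$ is non-degenerate'' is logically invalid: a lower bound by a non-degenerate random variable does not make a random variable non-degenerate (a constant dominates any bounded non-degenerate random variable). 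What one needs is an identity, or an additive decomposition with an independent summand; you gesture at the latter in your ``intrinsic alternative'' ($\xi=\xi^Y+\xi^{\text{other}}$ with independent parts) but do not carry it out.

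The paper avoids all of these pitfalls by never touching operator norms or spectra of the ac part directly. It first uses the bound \eqref{discrete.eq.bound} together with \eqref{discrete.p1.eq5} to write $\int x^2\xi(dx)$ as a limit in probability of $\int x^2\xi_n(dx)$, then observes via Fact \ref{fact.trace} that $\int x^2\xi_n(dx)=\lim_N N^{-2}\Tr(\widetilde W_{N,n}^2)$, which is an explicit finite quadratic form $\sum_{i,j,k,l}\alpha_{ijkl}V_{ik}V_{jl}$ in the atom-by-atom i.i.d.\ Gaussians $V_{ik}$. The positivity of $\alpha_{1111}$ comes from the elementary trigonometric limit \eqref{discrete.t2.eq5} in Fact \ref{f10}. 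The decisive step is then Lemma \ref{discrete.l2}: a limit in probability of Gaussian quadratic forms $\sum\alpha_{jk}G_jG_k$ with $\alpha_{11}\ne 0$ must have a continuous law, proved by conditioning on $G_1$ and writing the limit as $\alpha_{11}G_1^2+G_1X+Y$ with $(X,Y)\perp G_1$. Continuity plus $\int x^2\xi(dx)\ge 0$ at once gives almost-sure positivity and non-degeneracy. This is the key lemma your argument is missing; once you have it, there is no need for operator-norm control of $W_N^Z$, no truncation of $f$, and no case split on the number of atoms.
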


\subsection{Long range dependence}\label{subsec:lrd}
In this subsection, we make the connection between the random matrix models and the long range dependence mentioned in Section \ref{sec:intro}. Recalling the fact that for a family of i.i.d. Gaussian random variables, the spectral measure is absolutely continuous, Theorem \ref{t1} can be interpreted as a result about the ``short range dependent'' component of the process $\{X_{j,k}:j,k\in\bbz\}$. Indeed, the LSD $\mu_f$ is completely determined by the absolutely continuous component of the spectral measure. 

On the other hand, Theorem \ref{discrete.t1} establishes the connection between the discrete component of the spectral measure and the limiting eigen measure $\xi$. In the presence of atoms in the spectral measure, a stationary Gaussian process is considered to have a long memory for several reasons. For example, in that case, the process is non-ergodic; see \cite{cornfeld:fomin:sinai:1982}. A trivial example of such a process is the following. Let $G$ be a $N(0,1)$ random variable, and set $X_{j,k}:=G$ for all $j,k$. 

It is also worth noting that in addition to the transition from $\esd$ to $\eim$, the scaling also changes from $\sqrt N$ to $N$ when passing from the former result to the latter.  Therefore, it is  clear that the absolutely continuous and discrete components of the process contribute only towards the LSD and the limiting eigen measure of $W_N$ respectively, albeit with  different scalings. The above observation suggests naturally the following definition of short and long range dependence.

\begin{defn}
 A mean zero stationary Gaussian process with positive variance indexed by $\bbz^2$ is short range dependent if the corresponding spectral measure is absolutely continuous, and the same is long range dependent if the spectral measure is discrete, that is, supported on a countable set.
\end{defn}

The above definitions, of course, are not exhaustive in that there may be processes whose range of dependence is neither short nor long. That can be hoped to be resolved partially if the role of the component $\nu_{cs}$ is understood. This we leave aside for future research.

We conclude this discussion by pointing out that there are other contexts in which long and short range dependence is defined based on absolute continuity of the spectral measure. For example, Section 5 of \cite{samorodnitsky:2006} approaches long range dependence for a stationary second order process indexed by $\bbz$ from the point of view of the growth rate of the variance of its partial sums. In particular, the definition given in (5.14) on page 194 therein is close to the definition given above, though not exactly the same.

\section{A corollary and examples}\label{example}

In this section, a corollary and a few numerical examples that follow from the results of the previous sections are discussed. The first one is a corollary of Theorem \ref{t4}, followed by a numerical example of the same result.

\begin{corollary}
 Assume that $(G_n:n\in\bbz)$ is a {one}-dimensional stationary Gaussian process with zero mean and positive variance, and whose spectral measure is absolutely continuous. Let $((G_{in}:n\in\bbz):i\in\bbz)$ be a family of i.i.d.\ copies of $(G_n:n\in\bbz)$. Define
\[
 X_{j,k}:=G_{j-k,k},\,j,k\in\bbz\,.
\]
Then, $(X_{j,k}:j,k\in\bbz)$ is a stationary Gaussian process, and 
\[
 \mu_f=WSL(2\Var(G_0))\,.
\]
\end{corollary}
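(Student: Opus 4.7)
The plan is to reduce the claim to a direct application of Theorem \ref{t4}, so the work is to identify the spectral density $f$ of the process $(X_{j,k})$ and then verify the vanishing-sum hypothesis of that theorem.

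First I would check stationarity of $(X_{j,k})$ by computing the covariance. Writing $r_G(v) := \E(G_0 G_v)$ and using independence of the i.i.d.\ copies $(G_{i,\cdot})$, one gets
\[
 R(u,v) = \E(X_{0,0}X_{u,v}) = \E(G_{0,0}G_{u-v,v}) = r_G(v)\,\one(u=v),
\]
which depends only on $(u,v)$, so the process is stationary Gaussian. Next I would identify the spectral density. If $g$ denotes the spectral density of $(G_n)$ (so $r_G(v)=\int_{-\pi}^\pi e^{iv\lambda}g(\lambda)d\lambda$), then using $\one(u=v)=\frac1{2\pi}\int_{-\pi}^\pi e^{i(u-v)\theta}d\theta$ and the change of variables $x=\theta$, $y=\lambda-\theta$ on the torus, I get
\[
 R(u,v)=\int_{[-\pi,\pi]^2}e^{i(ux+vy)}\frac{g(x+y)}{2\pi}\,dx\,dy,
\]
so $f(x,y)=g(x+y)/(2\pi)$ and the spectral measure is absolutely continuous (so assumption \eqref{eq.assume} holds with $\nu_d\equiv0$).

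Next I would compute $d_{j,k}$ from Theorem \ref{t4}. Since $f(x,y)=f(y,x)$, one has $\sqrt{f(x,y)+f(y,x)}=\sqrt{g(x+y)/\pi}$. Substituting $s=x+y$, $t=x$ and integrating out $t$ yields
\[
 d_{j,k} = \frac{1}{\sqrt{2\pi}}\,\one(j=k)\int_{-\pi}^{\pi}e^{-iks}\sqrt{g(s)}\,ds,
\]
that is, $d_{j,k}$ vanishes unless $j=k$. Consequently, for any $j\neq 0$ and any finite set $A_n$, a nonzero summand in
\[
 \sum_{k,l\in\bbz}d_{k,l}d_{j+k,l}\one(k,l,j+k\in A_n)
\]
would require $k=l$ and $j+k=l$, forcing $j=0$. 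Hence the sum is identically zero, so the hypothesis of Theorem \ref{t4} holds trivially (with, say, $A_n=\{-n,\ldots,n\}$).

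Finally, Theorem \ref{t4} gives $\mu_f=WSL(2\|f\|_1)$, and the change of variables $s=x+y$ yields
\[
 \|f\|_1=\int_{[-\pi,\pi]^2}\frac{g(x+y)}{2\pi}dx\,dy=\int_{-\pi}^{\pi}g(s)\,ds=r_G(0)=\Var(G_0),
\]
proving $\mu_f=WSL(2\Var(G_0))$. The main non-routine step is the torus change of variables identifying $f(x,y)=g(x+y)/(2\pi)$; once that is in hand, the key algebraic miracle is that $d_{j,k}$ is supported on the diagonal $\{j=k\}$, which makes the hypothesis of Theorem \ref{t4} hold for free.
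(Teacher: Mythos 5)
Your proposal is correct and follows essentially the same route as the paper's proof: compute the covariance to confirm stationarity, identify the spectral density as $f(x,y)=g(x+y)/(2\pi)$ via a torus change of variables, observe that the Fourier coefficient of $\sqrt{f}$ (equivalently $d_{j,k}$) vanishes off the diagonal $j=k$ so that the hypothesis of Theorem \ref{t4} holds with $A_n=\{-n,\ldots,n\}$, and finish with $\|f\|_1=\Var(G_0)$. The only cosmetic differences are that the paper checks $\int e^{-\io(jx+ky)}\sqrt{f(x,y)}\,dx\,dy=0$ for $j\ne k$ rather than spelling out the support of $d_{j,k}$, and leaves the identity $\|f\|_1=\Var(G_0)$ implicit; your write-up makes both explicit, which is fine since $f$ is symmetric so $\sqrt{f+f^T}$ is proportional to $\sqrt{f}$.
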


\begin{proof}
The hypotheses imply the existence of a non-negative function $h$ on $(-\pi,\pi]$ such that
\[
 \E\left[G_0G_v\right]=\int_{-\pi}^\pi e^{\io vx}h(x)dx,\,v\in\bbz\,.
\]
Clearly, for all $j,k,u,v\in\bbz$,
\begin{eqnarray*}
 \E\left[X_{j,k}X_{j+u,k+v}\right]&=&\E\left[G_0G_v\right]\one(u=v)\,,
\end{eqnarray*}
 which shows the stationarity. Extend $h$ to whole of $\bbr$ by the identity $h(\cdot)\equiv h(\cdot+2\pi)$. Notice that
\begin{eqnarray*}
 &&\int_{[-\pi,\pi]^2}e^{\io(ux+vy)}h\left({x+y}\right)dxdy\\
 &=&\int_{-\pi}^{\pi}e^{\io(u-v)x} \left[\int_{x-\pi}^{x+\pi}e^{\io vz}h(z)dz\right]dx\\
 &=&2\pi\E\left[G_0G_v\right]\one(u=v)\,.
\end{eqnarray*}
\comment{correction-7}
Thus, 
\[
 f(x,y):=(2\pi)^{-1}h(x+y),\,-\pi\le x,y\le\pi\,,
\]
is the spectral density for $(X_{j,k})$. Furthermore, for integers $j\neq k$,
\[
 \int_{[-\pi,\pi]^2}e^{-\io(jx+ky)}\sqrt{f(x,y)}dxdy=0\,,
\]
and therefore, the hypothesis of Theorem \ref{t4} is satisfied with\\ $A_n:=\{-n,\ldots,n\}$. This completes the proof.
\end{proof}

\begin{remark}
The above corollary is false without the assumption that the process $(G_n)$ has a spectral density. For example, if $G_m=G_n$ for all $m,n$, then the matrix $W_N$ becomes a Toeplitz matrix. \cite{BDJ2006} have shown that the LSD has unbounded support in this case.
\end{remark}

\begin{example}\label{example1}
Let $(G_n:n\in\bbz)$ be a zero mean stationary Gaussian process with spectral density $|x|^{-1/2}$, and let $X_{j,k}$ be as in the above corollary. Then, it follows that
\[
 \mu_f=WSL(8\sqrt\pi)\,,
\]
where
\[
 f(x,y):=(2\pi)^{-1}h(x+y),\,-\pi\le x,y\le\pi\,,
\]
with $h(\cdot)$ defined on $\bbr$ by the identities $h(\cdot)=h(\cdot+2\pi)$ and $h(z)=|z|^{-1/2}$ for $-\pi<z\le\pi$. It is easy to see that $\|f\|_2=\infty$, thus showing that the converses of Theorem \ref{t2} and the first part of Theorem \ref{theorem.new} are false.
\end{example}

Next, we shall see two numerical examples where Theorem \ref{t3} holds.\comment{correction-8}

\begin{example}\label{example2}
Let 
\[
 f(x,y)=\one(-\pi/2\le x,y\le\pi/2),\,-\pi\le x,y\le\pi\,.
\]
By Theorem \ref{t3}, it follows that
\[
 \mu_f=\eta_r\boxtimes WSL(1)\,,
\]
where $\eta_r$ is the law of $2^{3/2}\pi\one(|U|\le\pi/2)$, $U$ being a  random variable with the uniform distribution on the interval $(-\pi,\, \pi)$.\comment{correction-9(a)} 
A calculation of the moments of the right hand side using Theorem 14.4 in \cite{nica:speicher:2006} will show that $\mu_f$ is the law of $2\pi BW$ where $B$ and $W$ are independent (in the classical sense) random variables distributed as Bernoulli $(1/2)$ (that is, takes values $0$ and $1$) and $WSL(1)$ respectively. This is an example where the LSD is not a continuous probability measure.
\end{example}

\begin{example}\label{example3}
 Let 
\[
 f(x,y)=|xy|^{-1/2},\,-\pi\le x,y\le\pi\,.
\]
By Theorem \ref{t3}, it follows that
\[
 \mu_f=\eta_r\boxtimes WSL(1)\,,
\]
where $\eta_r$ is the law of $2^{3/2}\pi|U|^{-1/2}$, $U$ following the uniform distribution on $(-\pi,\pi)$, as before. \comment{correction-9(b)}
Since the second moment of $\eta_r$ is infinite, it follows that 
\[
 \int_\bbr x^4\mu_f(dx)=\infty\,.
\]
\end{example}

\section{Proofs}\label{sec:proof}

\subsection{Proofs of Theorems \ref{t1}-\ref{t4}}
We now proceed towards the proof of Theorem \ref{t1}. The proof is by the classical method of moments. However, as illustrated later by Example \ref{example3}, the moments of the LSD need not be finite. Hence, some work is needed to get around that. In order to enable a smooth reading of the proof, we start with a brief sketch of the same, before going into the technical details.

As a first step, we decompose the process $(X_{i,j})$ into the components corresponding to $\nu_{ac}$ and $\nu_d$, and call them $(Y_{i,j})$ and $(Z_{i,j})$ respectively. Using Fact \ref{fact:linprocess} below, $(Y_{i,j})$ is represented in law as a possibly infinite linear process with i.i.d. inputs. Consider a truncation of the obtained linear process at some finite level, that is, a finite linear process. For the random matrix constructed by this finite linear process, the LSD is computed by the standard method of moments. The effect of the truncation, that is, the distance between the ESDs of the matrices constructed by the truncated and non-truncated linear processes, is controlled using the Hoffman-Wielandt inequality. This enables us to compute the LSD of matrices formed with $(Y_{i,j})$, by letting the truncation level go to infinity. Finally, it is shown that adding $Z_{i,j}$ does not change the LSD. This completes the proof of Theorem \ref{t1}.

Define a map $T$ from $(-\pi,\pi]$ to itself by
\[
 T(x)=-x\one(x<\pi)+\pi\one(x=\pi),\,-\pi<x\le\pi\,.
\]
Since the integral on the right hand side of \eqref{p1.eq1} is real for all $u$ and $v$, it follows that $\nu$ is invariant under the transformation $(x,y)\mapsto(T(x),T(y))$, and in particular
\[
 \nu(\{(x,y)\})=\nu(\{(T(x),T(y))\})\text{ for all }x,y\,.
\]
Since the measure $\nu_d$ is concentrated on a countable set, and 
\[
 \nu_d(\{(x,y)\})=\nu(\{(x,y)\})\text{ for all }x,y\,,
\]
it follows that $\nu_d$ is also invariant under the map $T$. By \eqref{eq.assume}, it follows that $\nu_{ac}$ is also invariant under that map, that is,
\begin{equation}\label{eq.even}
 f(x,y)=f(-x,-y)\text{ for almost all }(x,y)\in[-\pi,\pi]^2\,.
\end{equation}
Therefore, for $k,l\in\bbz$, $c_{k,l}$ defined by
\begin{equation}\label{eq.defc}
c_{k,l}:=(2\pi)^{-1}\int_{[-\pi,\pi]^2} e^{-\io(kx+ly)}\sqrt{f(x,y)}\,dxdy\,,
\end{equation}
is a real number. By Parseval's identity, it follows that
\[
 \sum_{k,l\in\bbz}c_{k,l}^2<\infty\,.
\]
Let $(U_{i,j}:i,j\in\bbz)$ be i.i.d. $N(0,1)$ random variables. Define
\begin{equation}\label{eq.defy}
 Y_{i,j}:=\sum_{k,l\in\bbz}c_{k,l}U_{i-k,j-l},\,i,j\in\bbz\,,
\end{equation}
where the  series converges in the $L^2$ norm.\comment{correction-10} 
An important result, on which the current paper is built, is the following fact, which is analogous to a result in Chapter 6, Section 6.6 of \cite{varadhan:2001}.  \comment{correction-11}
\begin{fact}\label{fact:linprocess}
 The process $(Y_{i,j}:i,j\in\bbz)$ defined in \eqref{eq.defy} is a stationary Gaussian process with
\begin{equation}
\label{fact:linprocess.eq1} \E\left(Y_{i,j}Y_{i+u,j+v}\right)=\int_{[-\pi,\pi]^2} e^{\io(ux+vy)}f(x,y)dx\,dy,\,\text{for all }u,v\in\bbz\,. 
\end{equation}
\end{fact}

Since $\nu_d$ is $T$ invariant, it follows that
\begin{equation}\label{neweq1}
 \nu_d=\sum_{j\ge1}\frac{a_j}2\left(\delta_{(x_j,y_j)}+\delta_{(T(x_j),T(y_j))}\right)\,,
\end{equation}
for some at most countable set  $\{(x_1,y_1),(x_2,y_2),\ldots\}\subset(-\pi,\pi]^2$ and non-negative numbers $a_1,a_2,\ldots$ such that $\sum_ja_j<\infty$. Since some of the $a_j$'s can be zero, we can and do assume without loss of generality that the above set is countably infinite. Let $(V_{i,j}:i=1,2,\,j\ge1)$ be a family of i.i.d. $N(0,1)$ random variables which is independent of the family $(U_{i,j}:i,j\in\bbz)$. Define
\begin{equation}\label{eq.defz}
 Z_{i,j}:=\sum_{k=1}^\infty \sqrt{a_k}\left[V_{1,k}\cos(ix_k+jy_k)+V_{2,k}\sin(ix_k+jy_k)\right],\,i,j\in\bbz\,.
\end{equation}
It can be verified that
\[
\E\left(Z_{i,j}Z_{i+u,j+v}\right)=\int_{(-\pi,\pi]^2}e^{\io(ux+vy)}\nu_d(dx,dy)\,,i,j,u,v\in\bbz\,,
\]
since the integral on the right hand side is real. The above along with \eqref{p1.eq1} and \eqref{fact:linprocess.eq1} imply that \comment{correction-12}
\begin{eqnarray*}
(X_{i,j}:i,j\in\bbz)
&\eid&(Y_{i,j}+Z_{i,j}:i,j\in\bbz)\,.
\end{eqnarray*}
 Therefore, without loss of generality, we assume that
\begin{equation}\label{eq15}
 X_{i,j}=Y_{i,j}+Z_{i,j},\,i,j\in\bbz\,.
\end{equation} 

Fix $n\ge1$, and define 
\begin{eqnarray}\label{eq.defyijn}
 Y_{i,j,n}:=\sum_{k,l=-n}^nc_{k,l}U_{i-k,j-l},\,i,j\in\bbz\,,
\end{eqnarray}
and similarly,
\[
 Z_{i,j,n}:=\sum_{k=1}^n\sqrt{a_k}\left[V_{1,k}\cos(ix_k+jy_k)+V_{2,k}\sin(ix_k+jy_k)\right],\,i,j,n\ge1\,.
\]
Set
\begin{equation}\label{eq.defhatf}
\hat f_n(u,v):=\E\left[Y_{i,j,n}Y_{i+u,j+v,n}\right] 
\end{equation}
\[
=\sum_{k,l\in\bbz}c_{k,l}c_{k+u,l+v}\one(|k|\vee|l|\vee|k+u|\vee|l+v|\le n)\,,
\]
for all $u,v\in\bbz$.
For $N,n\ge1$, define the following $N\times N$ symmetric matrices:
\begin{eqnarray}
W_{N,n}(i,j)&:=&Y_{i,j,n}+Y_{j,i,n}\,,\label{eq.defwnn}\\
W_{N,\infty}(i,j)&:=&Y_{i,j}+Y_{j,i}\,,\label{eq.defwninfty}\\
\ol W_{N,n}(i,j)&:=&Y_{i,j,n}+Y_{j,i,n}+Z_{i,j,n}+Z_{j,i,n}\,,\label{eq.defolwnn}\\
\widetilde W_N(i,j)&:=&Z_{i,j}+Z_{j,i}\,,\label{eq.deftildew}\\
\widetilde W_{N,n}(i,j)&:=&Z_{i,j,n}+Z_{j,i,n}\,,\label{eq.deftildewnn}
\end{eqnarray}
for all $1\le i,j\le N$.

Fix $m\ge1$, and $\sigma\in
NC_2(2m)$. Let $(V_1,\ldots,V_{m+1})$ denote the Kreweras
complement of $\sigma$. For $1\le i\le m+1$,
denote
\begin{equation}\label{eq.defV}
 V_i:=\{v_1^i,\ldots,v_{l_i}^i\}\,.
\end{equation}
Define
\begin{equation}\label{eq.defS}
 S(\sigma):=\left\{(k_1,\ldots,k_{2m})\in\bbz^{2m}:\sum_{j=1}^{l_s}k_{v^s_j}=0,\,s=1,\ldots,m+1\right\}\,,
\end{equation}
and
\begin{equation}\label{eq.defbeta}
 \beta_{n,2m}:=\sum_{\sigma\in NC_2(2m)}\sum_{k\in S(\sigma)}\prod_{(u,v)\in\sigma}\left[\hat f_n(k_u,-k_v)+\hat f_n(k_v,-k_u)\right],\,m,n\ge1\,.
\end{equation}
 Notice that even though the set $S(\sigma)$ has infinite cardinality, only finitely many summands on the right hand side above are non-zero, because $\hat f_n(u,v)$ is $0$ if $|u|\vee|v|>2n$.

 Our first step towards proving Theorem \ref{t1} is the following proposition.
 
\begin{prop}\label{ac.p1}
 For fixed $n\ge1$, there exists a compactly supported symmetric probability measure $\mu_{f,n}$ whose $2m$-th moment is $\beta_{n,2m}$ for all $m\ge1$. Furthermore,
 \[
  \esd(W_{N,n}/\sqrt N)\to\mu_{f,n}\,,
 \]
weakly in probability, as $N\to\infty$.
\end{prop}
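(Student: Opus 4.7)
The plan is to apply the classical method of moments to $W_{N,n}/\sqrt N$, carrying out a Wigner-type combinatorial analysis adapted to the stationary Gaussian setup. The key observation making this approach feasible is that $Y_{i,j,n}$, being a finite linear combination, depends on $(U_{i',j'})$ only for $|i'-i|\le n$ and $|j'-j|\le n$, so the entries of $W_{N,n}$ form a finite-range dependent centered Gaussian random field with uniformly bounded variance.

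First I would verify that $(\beta_{n,2m})_{m\ge1}$ is the sequence of even moments of a compactly supported symmetric probability measure $\mu_{f,n}$. Since $\hat f_n(u,v)=0$ whenever $|u|\vee|v|>2n$, each sum over $S(\sigma)$ in \eqref{eq.defbeta} is effectively a finite sum over a bounded box. Combined with the Cauchy--Schwarz bound $|\hat f_n(u,v)|\le \hat f_n(0,0)$ applied to \eqref{eq.defhatf} and the estimate $|NC_2(2m)|\le 4^m$, one gets $\beta_{n,2m}\le C_n^{m}$ for a constant $C_n$ depending only on $n$. This Carleman-type growth rate yields existence and uniqueness of a compactly supported measure with the prescribed even moments; declaring all odd moments to be zero defines the symmetric $\mu_{f,n}$.

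Next I would show that for every $m\ge1$,
\[
 \frac{1}{N}\E\Big[\Tr\big((W_{N,n}/\sqrt N)^{2m}\big)\Big] \longrightarrow \beta_{n,2m},
\]
by the standard cyclic trace expansion and Gaussian Wick's formula. The expectation decomposes into a sum over pair partitions $\pi$ of $\{1,\ldots,2m\}$ of products of covariances $\E[W_{N,n}(i_s,i_{s+1})W_{N,n}(i_t,i_{t+1})]$; each such covariance further expands, via $W_{N,n}(i,j)=Y_{i,j,n}+Y_{j,i,n}$ and Fact \ref{fact:linprocess}, into four evaluations of $\hat f_n$ at differences of the relevant indices. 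After passing to the differences $k_s=i_{s+1}-i_s$, the covariance constraints force certain of the $k_s$ to lie in $\{-2n,\ldots,2n\}$, and the indices that remain free correspond exactly to the blocks of the Kreweras complement of $\pi$. A Wigner-type counting then shows that crossing partitions contribute at most $O(N^{m})$, while each non-crossing $\sigma\in NC_2(2m)$ contributes, after grouping the four $\hat f_n$-terms per pair into the symmetric factor $\hat f_n(k_u,-k_v)+\hat f_n(k_v,-k_u)$, a leading term of order $N^{m+1}$ equal to $\sum_{k\in S(\sigma)}\prod_{(u,v)\in\sigma}[\hat f_n(k_u,-k_v)+\hat f_n(k_v,-k_u)]$, summing to $\beta_{n,2m}$ in the limit. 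All odd expected moments vanish by the symmetry $W_{N,n}\eid -W_{N,n}$, inherited from $(U_{i,j})\eid(-U_{i,j})$.

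Convergence in probability of the moments follows from a variance estimate
\[
 \Var\Big(\frac{1}{N}\Tr\big((W_{N,n}/\sqrt N)^{2m}\big)\Big)=O(N^{-1}),
\]
obtained by a parallel computation on two concatenated trace cycles: the Gaussian $4m$-point Wick expansion introduces extra pairings connecting the two cycles, each such connection costing one additional index constraint and hence one power of $N$. Since $\mu_{f,n}$ is compactly supported, convergence of all moments in probability upgrades to weak convergence in probability of $\esd(W_{N,n}/\sqrt N)$ to $\mu_{f,n}$. The principal technical obstacle I anticipate is the Kreweras-complement bookkeeping: one must match, for non-crossing $\sigma$, the free indices in the cycle to the blocks $V_1,\ldots,V_{m+1}$, and then verify that after collecting the four cross-terms from the $Y_{i,j,n}+Y_{j,i,n}$ expansion on each pair, one reproduces exactly the symmetric combination of $\hat f_n$ appearing in the definition of $\beta_{n,2m}$.
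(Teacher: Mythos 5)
Your proposal follows essentially the same route as the paper: method of moments with Wick's formula, exploiting that $\hat f_n$ vanishes beyond range $2n$, matching free indices to blocks of the Kreweras complement, a variance estimate gaining one power of $N$ from cross-cycle pairings, and a Carleman bound from $\#NC_2(2m)=O(4^m)$ together with $|\hat f_n(u,v)|\le\hat f_n(0,0)$ (the paper phrases the latter as $\max_{|u|\vee|v|\le 2n}|\hat f_n(u,v)|$, but these coincide by Cauchy--Schwarz). The one place where the paper is more explicit than your sketch is in handling the leading-order combinatorics: it introduces the notion of a $\sigma$-Catalan tuple — indices within a pair of $\sigma$ are close (within $2n$) while indices in distinct blocks of the Kreweras complement are separated by more than $4n$ — and shows that on such tuples exactly two of the four cross-terms from $(Y_{i,j,n}+Y_{j,i,n})$ survive and the expectation factors exactly as $\prod_{(u,v)\in\sigma}[\hat f_n(k_u,-k_v)+\hat f_n(k_v,-k_u)]$, with a unique reparametrization $(j(i),k(i))$ tied to the Kreweras blocks rather than to the naive consecutive differences $i_{s+1}-i_s$ you mention; the remaining tuples are then dismissed by a standard count. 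You flag this bookkeeping as the main obstacle, and the paper's $\sigma$-Catalan device is precisely the tool it uses to discharge it.
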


For the proof of the above proposition, a few combinatorial notions will be required. Fix $m,n\ge1$. 
\begin{defn}
For $\sigma\in NC_2(2m)$, a tuple $j:=(j_1,\ldots,j_{2m})\in\bbz^{2m}$ is \textbf{$\sigma$-Catalan}, if 
\[
(j_{u-1},j_u)=(j_v,j_{v-1})\text{ for all }(u,v)\in\sigma\,,
\]
with the convention that $j_0:=j_{2m}$, a convention that will be used throughout the paper.
\end{defn}

\begin{lemma}\label{comb.l1}
Fix $\sigma\in NC_2(2m)$ and $N\ge1$. 
Label the blocks of the Kreweras complement of $\sigma$ and the elements thereof as in \eqref{eq.defV} and \eqref{eq.defS}, with the additional requirement that
\comment{correction-13(b)}
\begin{equation}\label{comb.l1.eq3}
v_1^u\le\ldots\le v_{l_u}^u\,,1\le u\le m+1\,.
\end{equation}
 Assume that $i\in\{1,\ldots,N\}^{2m}$ satisfies for all $1\le u<v\le2m$,
\begin{equation}\label{comb.l1.eq1}
\left|i_{u-1}-i_v\right|\vee\left|i_u-i_{v-1}\right|\le 2n+1\text{ if }(u,v)\in\sigma\,,
\end{equation}
and
\begin{equation}\label{comb.l1.eq2}
|i_u-i_v|>9mn\text{ whenever }{\mathcal T}_\sigma(u)\neq{\mathcal T}_\sigma(v)\,,
\end{equation}
where ${\mathcal T}_\sigma(\cdot)$ is as in \eqref{eq.defTsigma}. Then there exist a $\sigma$-Catalan tuple $j$ and a $k\in S(\sigma)$ such that\newcomment{Comment 6}
\begin{equation}\label{comb.l1.eq4}
i_{v_x^u}=j_{v_1^u}+\sum_{h=1}^xk_{v_h^u}\,,1\le x\le l_u\,,1\le u\le m+1\,.
\end{equation}
Furthermore,
\begin{equation}\label{comb.l1.eq5}
\E\left(\prod_{u=1}^{2m}W_{N,n}(i_{u-1},i_u)\right)=\prod_{(w,x)\in\sigma}\left[\hat f_n(k_w,-k_x)+\hat f_n(k_x,-k_w)\right]\,,
\end{equation}\newcomment{Comment 4}
$\hat f(\cdot,\cdot)$ being as in \eqref{eq.defhatf}.\comment{correction-13(c)}
\end{lemma}

\begin{proof}
Fix $i\in\{1,\ldots,N\}^{2m}$ satisfying \eqref{comb.l1.eq1} and \eqref{comb.l1.eq2}. Define a tuple $k\in\bbz^{2m}$ by
\begin{equation}\label{comb.l1.eq6}
k_{v^u_h}:=
\begin{cases}
i_{v^u_{h+1}}-i_{v^u_h},&1\le h\le l_u-1\,,1\le u\le m+1\,,\\
i_{v^u_1}-i_{v^u_{l_u}},&h=l_u\,,1\le u\le m+1\,.
\end{cases}
\end{equation}
It is immediate that
\[
\sum_{h=1}^{l_u}k_{v^u_h}=0\text{ for all }1\le u\le m+1\,,
\]
and hence $k\in S(\sigma)$. Setting
\[
j_{v^u_h}:=i_{v^u_{l_u}}\,,1\le h\le l_u\,,1\le u\le m+1\,,
\]
it is easy to see that $j$ is $\sigma$-Catalan, and that \eqref{comb.l1.eq4} holds. 

In order to show \eqref{comb.l1.eq5}, we fix $(w,x)\in\sigma$ and claim that
\begin{eqnarray}
i_w-i_{x-1}&=&k_w\,,\label{comb.l1.eq7}\\
i_x-i_{w-1}&=&k_x\,.\label{comb.l1.eq8}
\end{eqnarray}
Assume without loss of generality that $w<x$. Therefore, $\overline w$ and $\overline{x-1}$ belong to the same block in $K(\sigma)$, and furthermore, the block containing them is a subset of $\{\overline w,\overline{w+1},\ldots,\overline{x-1}\}$. Thus, \eqref{comb.l1.eq7} follows from \eqref{comb.l1.eq4}. We show \eqref{comb.l1.eq8} separately for the cases $w\ge2$ and $w=1$. If $w\ge2$, then $\overline{w-1}$ and $\overline x$ are in the same block of $K(\sigma)$, and furthermore that block does not intersect with $\{\overline w,\overline{w+1},\ldots,\overline{x-1}\}$. This shows \eqref{comb.l1.eq8}, once again with the help of \eqref{comb.l1.eq4}. If $w=1$, then $\overline{2m}$ and $\overline x$ are in the same block of $K(\sigma)$. Obviously, $\overline{2m}$ has to be the last member of its block. Since $(1,x)\in\sigma$, it follows that $\overline x$ is the first member of the block containing itself and $\overline{2m}$, showing that
\[
 i_x=i_{2m}+k_x=i_0+k_x=i_{w-1}+k_x\,.
\]
This completes the proof of \eqref{comb.l1.eq8}

Our next claim is that 
\begin{equation}\label{comb.l1.eq9}
\E\left(W_{N,n}(i_{w-1},i_w)W_{N,n}(i_{x-1},i_x)\right)=\hat f_n(k_w,-k_x)+\hat f_n(k_x,-k_w)\,.
\end{equation}
Since $(w,x)\in\sigma$, it is immediate that ${\mathcal T}_\sigma(w)\neq{\mathcal T}_\sigma(x)$.
The inequality \eqref{comb.l1.eq2} implies that
\[
|i_w-i_x|>9mn>2n+1\,,
\]
and hence
\[
\E\left(Y_{i_{w-1},i_w,n}Y_{i_{x-1},i_x,n}\right)=\E\left(Y_{i_w,i_{w-1},n}Y_{i_x,i_{x-1},n}\right)=0\,.
\]
This  implies that 
\begin{eqnarray*}
&&\E\left(W_{N,n}(i_{w-1},i_w)W_{N,n}(i_{x-1},i_x)\right)\\
&=&\E\left(Y_{i_{w-1},i_w,n}Y_{i_x,i_{x-1},n}\right)+\E\left(Y_{i_w,i_{w-1},n}Y_{i_{x-1},i_x,n}\right)\\
&=&\hat f_n(i_x-i_{w-1},i_{x-1}-i_w)+\hat f_n(i_w-i_{x-1},i_{w-1}-i_x)\\
&=&\hat f_n(k_x,-k_w)+\hat f_n(k_w,-k_x)\,,
\end{eqnarray*}
the last line following from \eqref{comb.l1.eq7} and \eqref{comb.l1.eq8}, thereby establishing \eqref{comb.l1.eq9}.

The claim \eqref{comb.l1.eq5} will follow if it can be shown that $\xi_1,\ldots,\xi_m$ are independent, where
\[
\xi_j:=W_{N,n}(i_{w_j-1},i_{w_j})W_{N,n}(i_{x_j-1},i_{x_j})\,,1\le j\le m\,,
\]
and $\sigma=\{(w_1,x_1),\ldots,(w_m,x_m)\}$.  Fix $1\le j<k\le m$, and since $\sigma$ is non-crossing, it can be assumed without loss of generality that either
\begin{equation}
\label{neweq.opt1}w_j<x_j<w_k<x_k\,,
\end{equation}
or\newcomment{Comment 7}
\begin{equation}
\label{neweq.opt2}w_j<w_k<x_k<x_j\,.
\end{equation}
Since the entries of $W_{N,n}$ are jointly Gaussian, for showing independence, it suffices to check that the covariances vanish.
In view of \eqref{comb.l1.eq2}, it follows that 
\[
\E\left(W_{N,n}(i_w,i_x)W_{N,n}(i_y,i_z)\right)=0\text{ if }{\mathcal T}_\sigma(y)\neq{\mathcal T}_\sigma(x)\neq{\mathcal T}_\sigma(z)\,.
\]

Suppose that \eqref{neweq.opt1} holds.  Notice that
\begin{eqnarray}
\label{rev2.neweq1}\E\left(W_{N,n}(i_{w_j-1},i_{w_j})W_{N,n}(i_{w_k-1},i_{w_k})\right)&=&0\\
\nonumber\text{because }{\mathcal T}_\sigma(w_j)\neq{\mathcal T}_\sigma(w_k)\neq{\mathcal T}_\sigma(w_j-1)\,,&&\\
\label{rev2.neweq2}\E\left(W_{N,n}(i_{w_j-1},i_{w_j})W_{N,n}(i_{x_k-1},i_{x_k})\right)&=&0\\
\nonumber\text{because }{\mathcal T}_\sigma(x_k-1)\neq{\mathcal T}_\sigma(w_j)\neq{\mathcal T}_\sigma(x_k)\,,&&\\
\label{rev2.neweq3}\E\left(W_{N,n}(i_{x_j-1},i_{x_j})W_{N,n}(i_{w_k-1},i_{w_k})\right)&=&0\\
\nonumber\text{because }{\mathcal T}_\sigma(x_j)\neq{\mathcal T}_\sigma(w_k)\neq{\mathcal T}_\sigma(x_j-1)\,,&&\\
\label{rev2.neweq4}\E\left(W_{N,n}(i_{x_j-1},i_{x_j})W_{N,n}(i_{x_k-1},i_{x_k})\right)&=&0\\
\nonumber\text{because }{\mathcal T}_\sigma(x_j)\neq{\mathcal T}_\sigma(x_k-1)\neq{\mathcal T}_\sigma(x_j-1)\,.&&
\end{eqnarray}

Next, suppose that \eqref{neweq.opt2} holds. The equalities \eqref{rev2.neweq1} and \eqref{rev2.neweq4} will still hold for the same reasons as given above. We need to check \eqref{rev2.neweq2} and \eqref{rev2.neweq3}. To that end, observe that
\begin{eqnarray*}
\E\left(W_{N,n}(i_{w_j-1},i_{w_j})W_{N,n}(i_{x_k-1},i_{x_k})\right)&=&0\\
\nonumber\text{because }{\mathcal T}_\sigma(w_j-1)\neq{\mathcal T}_\sigma(x_k-1)\neq{\mathcal T}_\sigma(w_j)\,,&&\\
\E\left(W_{N,n}(i_{x_j-1},i_{x_j})W_{N,n}(i_{w_k-1},i_{w_k})\right)&=&0\\
\nonumber\text{because }{\mathcal T}_\sigma(w_k-1)\neq{\mathcal T}_\sigma(w_k)\neq{\mathcal T}_\sigma(x_j)\,.&&
\end{eqnarray*}

Thus, $\xi_j$ and $\xi_k$ are independent. In fact, it follows that $\xi_1,\ldots,\xi_m$ are independent which along with \eqref{comb.l1.eq9}, establishes \eqref{comb.l1.eq5}, and thereby completes the proof.
\end{proof}

\begin{defn}\label{defn.cat}
For $N,m,n\ge1$, $\sigma\in NC_2(2m)$ and $k\in S(\sigma)$, let\\
 $Cat(N,n,k,\sigma)$ denote the set of tuples $i\in\{1,\ldots,N\}^{2m}$ such that \eqref{comb.l1.eq1} - \eqref{comb.l1.eq4} hold.
\end{defn}

\begin{defn}\label{defn.pm} Let ${\mathcal P}(2m)$ denote the set of all pair partitions of $\{1,\ldots,2m\}$ (which may or may not be crossing).
For $N,n\ge1$, let $PM(N,n)$ denote the set of all tuples $i\in\{1,\ldots,N\}^{2m}$ such that there exists a $\pi\in{\mathcal P}(2m)$ satisfying
\begin{equation}
\label{defn.pm.eq1}\left|i_{u-1}\wedge i_u-i_{v-1}\wedge i_v\right|\vee\left|i_{u-1}\vee i_u-i_{v-1}\vee i_v\right|\le 2n+1\,,
\end{equation}
for all $(u,v)\in\pi$.
\end{defn}

\begin{remark} For all $N,m,n\ge1$, $\sigma\in NC_2(2m)$ and $k\in S(\sigma)$, it is clear that\newcomment{Comment 8}
\[
Cat(N,n,k,\sigma)\subset PM(N,n)\,.
\]
Equations \eqref{comb.l1.eq1}, \eqref{comb.l1.eq7} and \eqref{comb.l1.eq8} show that if $k\in S(\sigma)$ is such that $\max_j|k_j|>2n+1$, then $Cat(N,n,k,\sigma)$ is empty.
Furthermore, for any $\sigma_1,\sigma_2\in NC_2(2m)$,
\[
S(\sigma_1)\cap S(\sigma_2)\neq\emptyset\,,
\]
because the zero tuple belongs to both. However, if $k_t\in S(\sigma_t)$ for $t=1,2$, then
\[
Cat(N,n,k_1,\sigma_1)\cap Cat(N,n,k_2,\sigma_2)=\emptyset\,,
\]
whenever either $\sigma_1\neq\sigma_2$ or $k_1\neq k_2$.
\end{remark}

The following combinatorial result will also be used for the proof of Proposition \ref{ac.p1}.

\begin{lemma}\label{comb.l2}
For all fixed $m,n\ge1$,\newcomment{Comment 10}
\[
\lim_{N\to\infty}N^{-(m+1)}\#\left(PM(N,n)\setminus\bigcup_{\sigma\in NC_2(2m)}\bigcup_{k\in S(\sigma)}Cat(N,n,\sigma,k)\right)=0\,,
\]
and for all fixed $\sigma\in NC_2(2m)$ and $k\in S(\sigma)$,
\[
\lim_{N\to\infty}N^{-(m+1)}\#Cat(N,n,\sigma,k)=1\,.
\]
\end{lemma}

\begin{proof}\newcomment{Comment 9}
Fix $m,N\ge1$ and $\sigma\in NC_2(2m)$, and for a moment instead of $Cat(N,n,k,\sigma)$, consider the following set:
\[
C(N,\sigma):=\left\{i\in\{1,\ldots,N\}^{2m}:i_{u-1}=i_v,\,i_u=i_{v-1}\text{ for all }(u,v)\in\sigma\right\}\,.
\]
For all $N\ge1,n\in\{-1/2\}\cup\bbn$ and $\pi\in{\mathcal P}(2m)$, define the set
\[
P(N,n,\pi):=\bigl\{i\in\{1,\ldots,N\}^{2m}:\left|i_{u-1}\wedge i_u-i_{v-1}\wedge i_v\right|\vee
\]
\[
\,\,\,\,\,\,\,\,\,\left|i_{u-1}\vee i_u-i_{v-1}\vee i_v\right|\le2n+1\text{ for all }(u,v)\in\pi\bigr\}\,.
\]
Note that for $\sigma\in NC_2(2m)$, $P(N,-1/2,\sigma)\supset C(N,\sigma)$.

The following are very well known combinatorial results which have been frequently used in random matrix theory, for example to prove Wigner's theorem; see the proof of Theorem 4 in \cite{bose:sen:2008}. For any $\pi\in{\mathcal P}(2m)$,
\begin{equation}
\label{comb.l2.eq1}\lim_{N\to\infty}N^{-(m+1)}\#P(N,-1/2,\pi)=
\begin{cases}
1\,,&\text{if }\pi\in NC_2(2m)\,,\\
0\,,&\text{otherwise}\,,
\end{cases}
\end{equation}
and for $\sigma\in NC_2(2m)$,
\begin{equation}
\label{comb.l2.eq2}\lim_{N\to\infty}N^{-(m+1)}\#\left(P(N,-1/2,\sigma)\setminus C(N,\sigma)\right)=0\,.
\end{equation}

Fix $n\ge1$ and $\sigma\in NC_2(2m)$. The claim \eqref{comb.l1.eq4} of Lemma \ref{comb.l1} implies that as $N\to\infty$, for all fixed $k\in S(\sigma)$,
\begin{equation}
\label{comb.l2.eq6}\#Cat(N,n,k,\sigma)=\#C(N,\sigma)+O(1)\,.
\end{equation}
This, in view of \eqref{comb.l2.eq1} and \eqref{comb.l2.eq2}, implies the second claim of the lemma.

For $i\in\bbn^{2m}$, denote
\[
\|i\|_\infty:=i_1\vee\ldots\vee i_{2m}\,,
\]
and 
\[
B_r(i):=\{j\in\bbn^{2m}:\|i-j\|_\infty\le r\}\,,r\ge0\,.
\]
It is easy to see that for all $r\ge0$,
\begin{equation}
\label{comb.l2.eq3}\sup_{i\in\bbn^{2m}}\#B_r(i)<\infty\,.
\end{equation}
Fix $n\ge1$ and $i\in PM(N,n,\pi)$. Clearly, there exists $j\in P(N,-1/2,\pi)$ such that
\[
\|i-j\|_\infty\le8mn\,.
\]
Thus, for all $N\ge1$,
\[
P(N,n,\pi)\subset\bigcup_{i\in P(N,-1/2,\pi)}B_{8mn}(i)\,.
\]
This along with \eqref{comb.l2.eq1} and \eqref{comb.l2.eq3} implies that for all $\pi\in{\mathcal P}(2m)\setminus NC_2(2m)$, 
\begin{equation}
\label{comb.l2.eq4}\lim_{N\to\infty}N^{-(m+1)}\#P(N,n,\pi)=0\,.
\end{equation}
Using \eqref{comb.l2.eq2} and arguments similar to those leading to \eqref{comb.l2.eq6}, one can show that for all $\sigma\in NC_2(2m)$,
\begin{equation}
\label{comb.l2.eq5}\lim_{N\to\infty}N^{-(m+1)}\#\left(P(N,n,\sigma)\setminus\bigcup_{k\in S(\sigma)}Cat(N,n,k,\sigma)\right)=0\,.
\end{equation}
The observation that
\[
PM(N,n)=\bigcup_{\pi\in{\mathcal P}(2m)}P(N,n,\pi)\,,
\]
along with \eqref{comb.l2.eq4} and \eqref{comb.l2.eq5} establish the first claim of the lemma, thus completing the proof.
\end{proof}

\begin{proof}[Proof of Proposition \ref{ac.p1}]
The proof is by the method of moments. As is now standard in the literature, for executing the proof, it is sufficient to show that
\begin{equation}\label{ac.p1.eq1}
 \lim_{N\to\infty}N^{-(m+1)}\E\left[\Tr\left(W_{N,n}^{2m}\right)\right]=\beta_{n,2m}\text{ for all }m\ge1\,,
\end{equation}
\begin{equation}\label{ac.p1.eq2}
 \lim_{N\to\infty}N^{-(m+2)}\Var\left[\Tr\left(W_{N,n}^{m}\right)\right]=0\text{ for all }m\ge1\,,
\end{equation}
and
\begin{equation}\label{ac.p1.eq3}
 \limsup_{m\to\infty}\beta_{n,2m}^{1/2m}<\infty\,.
\end{equation}
It is easy to see from  symmetry that\comment{correction-13(a)}
\[
\E\left[\Tr\left(W_{N,n}^{m}\right)\right]=0\text{ for all }N\ge1\,,\text{ odd }m\,,
\]
justifying why it suffices to consider only the even moments in \eqref{ac.p1.eq1}. The above along with \eqref{ac.p1.eq1} and \eqref{ac.p1.eq2} implies that for all $m\ge1$, as $N\to\infty$,
\[
N^{-1}\Tr\left((W_{N,n}/\sqrt N)^m\right)\prob
\begin{cases}
\beta_{n,m},&m\text{ even}\,,\\
0,&m\text{ odd}\,.
\end{cases}
\]

We start with showing \eqref{ac.p1.eq1}. To that end, fix $m\ge1$, and for $i:=(i_1,\ldots,i_{2m})\in\bbz^{2m}$, define
\begin{equation}\label{ac.p1.eq12}
 E_i:=\E\left[\prod_{j=1}^{2m}\left(W_{N,n}(i_{j-1},i_j)\right)\right]\,,
\end{equation}
with the convention that $i_0:=i_{2m}$ for all $i\in\bbz^{2m}$. Recall that
\[
 \E\left[\Tr\left(W_{N,n}^{2m}\right)\right]=\sum_{i\in\{1,\ldots,N\}^{2m}}E_i\,.
\]
Notice that for all $1\le h,j,k,l\le N$,
\[
\E(Y_{h,j,n}Y_{k,l,n})=0\text{ whenever }|h-k|\vee|j-l|>2n+1\,,
\]
and hence
\begin{equation}
\label{ac.p1.eq13}\E\left(W_{N,n}(h,j)W_{N,n}(k,l)\right)=0\text{ if }|h\wedge j-k\wedge l|\vee|h\vee j-k\vee l|>2n+1\,.
\end{equation}
Since $\{Y_{h,j,n}:h,j\in\bbz\}$ are jointly Gaussian,\comment{correction-14} by the Wick's formula, \newcomment{Comment 11}
see for example \citet[Theorem 1.28]{janson:1997}, it follows that for $i\in\{1,\ldots,N\}^{2m}$,
\begin{equation}\label{ac.p1.wick}
E_i=\sum_{\pi\in{\mathcal P}(2m)}\prod_{(u,v)\in\pi}\E\left[W_{N,n}(i_{u-1},i_u)W_{N,n}(i_{v-1},i_v)\right]\,,
\end{equation}
which is zero if $i\notin PM(N,n)$, by the preceding equation. Thus, it follows that
\begin{equation}\label{ac.p1.eq10}
 \E\left[\Tr\left(W_{N,n}^{2m}\right)\right]=\sum_{i\in PM(N,n)}E_i\,.
\end{equation}
Equation \eqref{ac.p1.wick} implies that 
\begin{equation}\label{ac.p1.eq11}
\sup_{N\ge1,i\in\{1,\ldots,N\}^{2m}}|E_i|<\infty\,.
\end{equation}
This, in view of the first claim of Lemma \ref{comb.l2} and the remark preceding it, implies that 
\begin{equation}\label{ac.p1.eq4}
\E\left[\Tr\left(W_{N,n}^{2m}\right)\right]=o\left(N^{m+1}\right)+\sum_{\sigma\in NC_2(2m)}\sum_{k\in S(\sigma)}\sum_{{i\in Cat(N,n,\sigma,k)}}E_i\,,
\end{equation}
as $N\to\infty$. Lemma \ref{comb.l1} implies that for $i\in Cat(N,n,\sigma,k)$, $E_i$ equals the right hand side of \eqref{comb.l1.eq5}. Hence, the right hand side of \eqref{ac.p1.eq4} becomes
\begin{eqnarray}
\label{ac.p1.eq14}&&o\left(N^{m+1}\right)+\sum_{\sigma\in NC_2(2m)}\sum_{k\in S(\sigma)}\#Cat(N,n,\sigma,k)\prod_{(u,v)\in\sigma}\Bigl[\hat f_n(k_u,-k_v)\\
\nonumber&&\,\,\,\,\,\,\,\,\,\,\,\,\,\,\,\,\,\,\,\,\,\,\,\,+\hat f_n(k_v,-k_u)\Bigr]\\
\nonumber&=&(1+o(1))N^{m+1}\beta_{n,2m}\,,
\end{eqnarray}
the last line following from the second claim of Lemma \ref{comb.l2} and \eqref{eq.defbeta}. This along with \eqref{ac.p1.eq4} establishes \eqref{ac.p1.eq1}. The proof of \eqref{ac.p1.eq2} follows by a similar combinatorial analysis which is analogous to the proof by method of moments for the classical Wigner matrix, and hence is deferred till the Appendix.

The proof will be complete if \eqref{ac.p1.eq3} can be shown. To that end observe that
\[
 \beta_{n,2m}\le\left(32n^2\max_{|u|\vee|v|\le2n}|\hat f_n(u,v)|\right)^m\#NC_2(2m)\,.
\]
It can be shown by Stirling's approximation that
\[
 \#NC_2(2m)=O\left(4^m\right)\,,
\]
and hence \eqref{ac.p1.eq3} follows. This completes the proof.
\end{proof}

Recall the $N\times N$ random matrix $\ol W_{N,n}$ from \eqref{eq.defolwnn}. The second step in the proof of Theorem \ref{t1} is the following lemma.

\begin{lemma}\label{ac.l1}
 For fixed $n\ge1$, as $N\to\infty$,
 \[
  \esd(\ol W_{N,n}/\sqrt N)\to\mu_{f,n}\,,
 \]
weakly in probability, where $\mu_{f,n}$ is as in the statement of Proposition \ref{ac.p1}.
\end{lemma}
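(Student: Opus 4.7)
My plan is to show that, for fixed $n$, the matrix $\widetilde W_{N,n}$ is a perturbation of $W_{N,n}$ whose rank is bounded independently of $N$, so that Bai's rank inequality reduces the lemma directly to Proposition \ref{ac.p1}.

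First, from \eqref{eq.defolwnn}, \eqref{eq.defwnn} and \eqref{eq.deftildewnn} one has
\[
\ol W_{N,n}=W_{N,n}+\widetilde W_{N,n}.
\]
I would next bound $\rank(\widetilde W_{N,n})$. For each $k\in\{1,\ldots,n\}$, the angle-sum identities
\[
\cos(ix_k+jy_k)=\cos(ix_k)\cos(jy_k)-\sin(ix_k)\sin(jy_k),
\]
\[
\sin(ix_k+jy_k)=\sin(ix_k)\cos(jy_k)+\cos(ix_k)\sin(jy_k)
\]
show that the $N\times N$ matrix with $(i,j)$-entry $\sqrt{a_k}\,[V_{1,k}\cos(ix_k+jy_k)+V_{2,k}\sin(ix_k+jy_k)]$ has its row space spanned by the two vectors $(\cos(iy_k))_{i=1}^{N}$ and $(\sin(iy_k))_{i=1}^{N}$, and so has rank at most $2$. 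Adding its transpose, the $k$-th summand of $\widetilde W_{N,n}$ has rank at most $4$, and summing over $k=1,\ldots,n$ yields
\[
\rank(\widetilde W_{N,n})\le 4n.
\]

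By the rank inequality for Hermitian matrices (e.g.\ Theorem A.43 of Bai and Silverstein),
\[
\sup_{x\in\bbr}\bigl|F^{\ol W_{N,n}/\sqrt N}(x)-F^{W_{N,n}/\sqrt N}(x)\bigr|\le\frac{\rank(\widetilde W_{N,n})}{N}\le\frac{4n}{N},
\]
where $F^A$ denotes the cumulative distribution function of $\esd(A)$. For fixed $n$ the right-hand side tends to $0$ as $N\to\infty$. Combining with Proposition \ref{ac.p1}, which gives $\esd(W_{N,n}/\sqrt N)\to\mu_{f,n}$ weakly in probability, this uniform Kolmogorov bound transfers the same limit to $\esd(\ol W_{N,n}/\sqrt N)$, completing the proof.

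There is no substantial obstacle here: the argument rests on the elementary observation that, for fixed $n$, each of the finitely many atoms of $\nu_d$ contributes only a bounded-rank summand to $\widetilde W_{N,n}$, and such a perturbation is washed out by the $\sqrt N$ scaling. This is precisely the phenomenon that motivates the separate analysis of the discrete part on the scale $N$ via the eigen measure in Theorem \ref{discrete.t1}.
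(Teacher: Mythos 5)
Your proposal is correct and follows essentially the same route as the paper: decompose $\ol W_{N,n}=W_{N,n}+\widetilde W_{N,n}$, observe that for fixed $n$ the perturbation $\widetilde W_{N,n}$ has rank $O(n)$ uniformly in $N$ (the paper uses the slightly cruder bound $8n$, you get $4n$ via the angle-sum identities), and invoke the Bai--Silverstein rank inequality together with Proposition \ref{ac.p1}. The only cosmetic difference is that you phrase the rank inequality in terms of the Kolmogorov distance while the paper uses the L\'evy distance; both give the needed transfer of weak convergence.
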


For the proof of the above result, we shall use the following fact which follows from Theorem A.43 on page 503 in \cite{bai:silverstein:2010}.

\begin{fact}\label{fact:rank}
 Let us denote by $L$, the L\'evy distance, defined as
\begin{equation*}
 L(\nu_1,\nu_2):=\inf\Bigl\{\vep>0:\nu_1\left((-\infty,x-\vep]\right)-\vep\le\nu_2\left((-\infty,x]\right)\le
\end{equation*}
\[
 \nu_1\left((-\infty,x+\vep]\right)+\vep\mbox{ for all }x\in\bbr\Bigr\}\,,
\]
for probability measures $\nu_1,\nu_2$ on $\bbr$.\comment{correction-15}
For any two $N\times N$ real symmetric matrices $A$ and $B$ we have,
\[
 L\left(\esd(A),\esd(B)\right)\le\frac1N\rank(A-B)\,.
\]
\end{fact}

\begin{proof}[Proof of Lemma \ref{ac.l1}]
 All that needs to be shown is that
\[
 L\left(\esd(\ol W_{N,n}/\sqrt N),\mu_{f,n}\right)\prob0\,,
\]
as $N\to\infty$. In view of Proposition \ref{ac.p1}, it suffices to show that
\[
 L\left(\esd(\ol W_{N,n}/\sqrt N),\esd(W_{N,n}/\sqrt N)\right)\prob0\,,
\]
as $N\to\infty$. To that end, notice that by Fact \ref{fact:rank},
\begin{eqnarray*}
&& L\left(\esd(\ol W_{N,n}/\sqrt N),\esd(W_{N,n}/\sqrt N)\right)\\
&\le&
\frac1N\rank\left(\ol W_{N,n}-W_{N,n}\right)\,.
\end{eqnarray*}
It is easy to see that the rank of the $N\times N$ matrix whose $(i,j)$-th entry is $Z_{i,j,n}$ is at most $4n$. Therefore,
\[
 \rank\left(\ol W_{N,n}-W_{N,n}\right)\le8n\,.
\]
This completes the proof.
\end{proof}

For the final step in the proof of Theorem \ref{t1}, we shall use the following fact which is also well known. For the sake of completeness, a proof is included in the Appendix.

\begin{fact}\label{f4}
 Let $(\Sigma,d)$ be a {complete} metric space, and let $(\Omega,{\mathcal A},P)$ be a probability space. Suppose that $(X_{mn}:(m,n)\in\{1,2,\ldots,\infty\}^2\setminus\{\infty,\infty\})$ is a family of random elements in $\Sigma$, that is, measurable maps from $\Omega$ to $\Sigma$, the latter being equipped with the Borel $\sigma$-field induced by $d$. Assume that 
\begin{enumerate}
\item for all fixed $1\le m<\infty$,
\[
 d(X_{mn},X_{m\infty})\prob0\,,
\]
as $n\to\infty$,
\item and, for all $\vep>0$,
\[
 \lim_{m\to\infty}\limsup_{n\to\infty}P\left[d(X_{mn},X_{\infty n})>\vep\right]=0\,.
\]
\end{enumerate}
Then, there exists a random element $X_{\infty\infty}$ of $\Sigma$ such that
\begin{equation}\label{f4.eq1}
 d(X_{m\infty},X_{\infty\infty})\prob0\,,
\end{equation}
as $m\to\infty$, and
\[
 d(X_{\infty n},X_{\infty\infty})\prob0\,,
\]
as $n\to\infty$. Furthermore, if $X_{m\infty}$ is deterministic for all $m$, then so is $X_{\infty\infty}$, and then \eqref{f4.eq1} simplifies to
\begin{equation}\label{f4.eq2}
 \lim_{m\to\infty}d(X_{m\infty},X_{\infty\infty})=0\,.
\end{equation}
\end{fact}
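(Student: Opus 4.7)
\textbf{Proof proposal for Fact \ref{f4}.} My plan is to show that the sequence $(X_{m\infty})_{m\ge1}$ is Cauchy in probability, use completeness of $(\Sigma,d)$ to extract a limit $X_{\infty\infty}$, and then verify that the double-indexed family converges to this same limit in the other direction as well. The main obstacle is that we have no joint control on the family across both indices, only the two asymptotic hypotheses, so every estimate will have to be routed through a carefully chosen double triangle inequality with an auxiliary index $n$.

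First I would fix $\vep>0$ and $m,m'<\infty$, and write
\[
d(X_{m\infty},X_{m'\infty})\le d(X_{m\infty},X_{mn})+d(X_{mn},X_{\infty n})+d(X_{\infty n},X_{m'n})+d(X_{m'n},X_{m'\infty}).
\]
For fixed $m,m'$, the first and fourth terms tend to $0$ in probability as $n\to\infty$ by hypothesis (1). The middle two terms are controlled uniformly in $n$ (in the $\limsup$ sense) by hypothesis (2), and this bound can be made arbitrarily small by taking $m,m'$ large. Combining these with a union bound shows
\[
\lim_{m,m'\to\infty}P\bigl[d(X_{m\infty},X_{m'\infty})>\vep\bigr]=0,
\]
i.e.\ $(X_{m\infty})_{m\ge1}$ is Cauchy in probability.

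In a complete metric space, Cauchy in probability implies convergence in probability: pick a subsequence $m_k$ with $P[d(X_{m_k\infty},X_{m_{k+1}\infty})>2^{-k}]<2^{-k}$, apply Borel--Cantelli to get an almost surely Cauchy (hence convergent) subsequence in $\Sigma$, and call its almost sure limit $X_{\infty\infty}$; the full sequence then also converges in probability to $X_{\infty\infty}$, giving \eqref{f4.eq1}. To obtain $d(X_{\infty n},X_{\infty\infty})\prob0$, I would use the triangle inequality
\[
d(X_{\infty n},X_{\infty\infty})\le d(X_{\infty n},X_{mn})+d(X_{mn},X_{m\infty})+d(X_{m\infty},X_{\infty\infty}),
\]
choose $m$ large first to make the first and third terms small (by hypothesis (2) and by the just-established convergence, respectively), then let $n\to\infty$ with this $m$ fixed to kill the middle term via hypothesis (1).

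Finally, in the case where each $X_{m\infty}$ is deterministic, the Cauchy-in-probability statement for $(X_{m\infty})$ degenerates to an ordinary Cauchy condition in $(\Sigma,d)$, so the limit $X_{\infty\infty}$ is a deterministic element and convergence in probability collapses to deterministic convergence, which is precisely \eqref{f4.eq2}. The only slightly delicate step throughout is the passage from ``Cauchy in probability'' to an actual limit, which requires the completeness assumption and the standard subsequence-plus-Borel--Cantelli trick; everything else is bookkeeping with the two triangle inequalities above.
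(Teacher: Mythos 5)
The paper states Fact~\ref{f4} as ``well known'' and gives no proof, so there is nothing in the text to compare your argument against. Your proof is correct, and it is the standard argument one would give: (i) a four-term triangle inequality through the column index $n$ shows $(X_{m\infty})_m$ is Cauchy in probability; (ii) the usual $2^{-k}$-subsequence plus Borel--Cantelli plus completeness of $(\Sigma,d)$ produces a measurable a.s.\ limit $X_{\infty\infty}$ and upgrades the subsequential a.s.\ convergence to convergence in probability of the full sequence; (iii) a three-term triangle inequality through a large but fixed $m$ then yields $d(X_{\infty n},X_{\infty\infty})\prob0$; and (iv) when the $X_{m\infty}$ are deterministic, ``Cauchy in probability'' collapses to the ordinary Cauchy condition in $(\Sigma,d)$, whence $X_{\infty\infty}$ is a deterministic element and \eqref{f4.eq1} becomes \eqref{f4.eq2}. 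All the quantifier bookkeeping you sketch (taking $\limsup_n$ on both sides, using that the left side of the Cauchy bound is $n$-free, and choosing $m$ before $n$ in step (iii)) goes through exactly as you describe, so there is no gap.
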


\begin{proof}[Proof of Theorem \ref{t1}]
 The space of probability measures on $\bbr$ is a complete metric space when equipped with the L\'evy distance $L(\cdot,\cdot)$. In view of Lemma \ref{ac.l1} and Fact \ref{f4}, all that needs to be shown to complete the proof is that
\begin{equation}\label{t1.eq1}
 \lim_{n\to\infty}\limsup_{N\to\infty}P\left(L\left(\esd(W_N/\sqrt N),\esd(\ol W_{N,n}/\sqrt N)\right)>\vep\right)=0\,,
\end{equation}
for all $\vep>0$. Note that the roles of the indices $m$ and $n$ in Fact \ref{f4} are played by $n$ and $N$ respectively in \eqref{t1.eq1}.\comment{correction-16} 
To that end, fix $\vep>0$ and observe that
\begin{eqnarray*}
 &&P\left(L\left(\esd(W_N/\sqrt N),\esd(\ol W_{N,n}/\sqrt N)\right)>\vep\right)\\
 &\le&\vep^{-3}\E\left[L^3\left(\esd(W_N/\sqrt N),\esd(\ol W_{N,n}/\sqrt N)\right)\right]\\
 &\le&\vep^{-3}N^{-2}\E\Tr\left[(W_N-\ol W_{N,n})^2\right]\,,
\end{eqnarray*}
the inequality in the last line following from the Hoffman-Wielandt inequality; see Corollary A.41 on page 502 in \cite{bai:silverstein:2010}. Clearly, by \eqref{eq15}, it follows that
\begin{eqnarray}
&& \E\Tr\left[(W_N-\ol W_{N,n})^2\right]\label{ac.t1.eq1}\\
&\le&4\sum_{i,j=1}^N\left[\E\left[(Y_{i,j}-Y_{i,j,n})^2\right]+\E\left[(Z_{i,j}-Z_{i,j,n})^2\right]\right]\nonumber\\
&=&4N^2\left[\sum_{k=n+1}^\infty a_k+\sum_{i,j\in\bbz:|i|\vee|j|>n}c_{i,j}^2\right]\,.\label{ac.t1.eq2}
\end{eqnarray}
Using the assumptions $\sum_j a_j<\infty$ and $\sum_{j,k\in \bbz} c_{j,k}^2<\infty$, it follows that the term inside the bracket in \eqref{ac.t1.eq2} goes to zero and hence \eqref{t1.eq1} follows. \comment{correction-17} 
Fact \ref{f4} ensures the existence of a deterministic probability measure $\mu_f$ such that 
\[
 L\left(\esd(W_N/\sqrt N),\mu_f\right)\prob0\,,
\]
as $N\to\infty$. 

Furthermore, assertion \eqref{f4.eq2} ensures that
\begin{equation}\label{t1.eq2}
 \mu_{f,n}\weak\mu_f\text{ as }n\to\infty\,.
\end{equation}
From the definition, it is easy to see that $\mu_{f,n}$ is determined by $f$ for every $n\ge1$, and hence so is $\mu_f$.
This completes the proof of Theorem \ref{t1}.
\end{proof}

\begin{remark}\label{rem:muf}
Since $\mu_{f,n}$ is compactly supported for each $n$, its characteristic function is\comment{correction-18}
\[
 \int_\bbr e^{itx}\mu_{f,n}(dx)=1+\sum_{m=1}^\infty(-1)^m\frac{\beta_{n,2m}}{(2m)!}t^{2m},\,t\in\bbr\,.
\]
Thus, the characteristic function of $\mu_f$ is
\[
 \int_\bbr e^{itx}\mu_{f}(dx)=1+\lim_{n\to\infty}\sum_{m=1}^\infty(-1)^m\frac{\beta_{n,2m}}{(2m)!}t^{2m},\,t\in\bbr\,.
\]
It is worth noting that exchanging the sum and limit above does not make sense because $\lim_{n\to\infty}\beta_{n,2m}$ may or may not be finite. Example \ref{example3} is one where the limit is infinite for all $m\ge2$.
\end{remark}

\begin{proof}[Proof of Theorem \ref{t6}]
 Denote the probability space on which we were working so far by $(\Omega,{\mathcal A},P)$. In particular, the random matrices $ W_{N,n}$  are defined on this probability space. Consider the interval $(0,1)$ equipped with the standard Borel $\sigma$-field $\mathcal B((0,1))$ and the Lebesgue measure $Leb$ which when restricted to $(0,1)$ becomes a probability measure. Define a master probability space
\[
 \left(\Omega\times(0,1),\mathcal A\times\mathcal B((0,1)),\mathbb P:=P\times Leb\right)\,.
\]
Denote the expectation with respect to $\mathbb P$ by $\mathbb E$. 
By Proposition \ref{ac.p1} and the Cantor diagonalization principle, one can choose positive integers $N_1<N_2<N_3<\ldots$ such that for all fixed $n\ge1$,
\[
 \esd( W_{N_k,n}/\sqrt {N_k})\to\mu_{f,n}\text{ as }k\to\infty\,,
\]
weakly {\bf almost surely}, that is,
\begin{equation}\label{t6.eq1}
 \lim_{k\to\infty}L\left(\esd( W_{N_k,n}/\sqrt {N_k}),\mu_{f,n}\right)=0\text{ almost surely},
\end{equation}
for all fixed $n\ge1$, where $L$ is the L\'evy distance. For $1\le k,n<\infty$, we define  random variables\comment{correction-19} $\chi_{k,n}$ on $\Omega\times(0,1)$ by
\[
 \chi_{k,n}(\omega,x):=N_k^{-1/2}\lambda_{\lceil N_kx\rceil}\left( W_{N_k,n}(\omega)\right),\,\omega\in\Omega,x\in(0,1)\,.
\]
Furthermore, for all $k$, define
\[
 \chi_{k,\infty}(\omega,x):=N_k^{-1/2}\lambda_{\lceil N_kx\rceil}\left( W_{N_k,\infty}(\omega)\right),\,\omega\in\Omega,x\in(0,1)\,,
\]
where $W_{N,\infty}$ is as in \eqref{eq.defwninfty}.
Finally, for all $n\ge1$, define
\[
 \chi_{\infty,n}(\omega,x):=F_n^\leftarrow(x),\,\omega\in\Omega,x\in(0,1)\,,
\]
where $F_n(\cdot)$ is the c.d.f. corresponding to $\mu_{f,n}$, and for any c.d.f. $F(\cdot)$, $F^\leftarrow(\cdot)$ is defined by
\[
 F^\leftarrow(y):=\inf\left\{x\in\bbr:F(x)\ge y\right\},\,0<y<1\,.
\]
Our first goal is to show that for all fixed $1\le n<\infty$,
\begin{equation}\label{t6.eq2}
 \chi_{k,n}\to\chi_{\infty,n}\,\mathbb P\text{-almost surely, as }k\to\infty\,.
\end{equation}
To that end, define the set
\[
 A:=\left\{\omega\in\Omega:\lim_{k\to\infty}L\left(\esd( W_{N_k,n}(\omega)/\sqrt {N_k}),\mu_{f,n}\right)=0\text{ for all }n\ge1\right\}\,.
\]
By \eqref{t6.eq1}, it follows that $P(A)=1$. Therefore, for establishing \eqref{t6.eq2}, it suffices to show that for all $\omega\in A$,
\begin{equation}\label{t6.eq3}
 \chi_{k,n}(\omega,x)\to\chi_{\infty,n}(\omega,x)\text{ as }k\to\infty\text{ for almost all }x\in(0,1)\,.
\end{equation}
To that end, fix $\omega\in A$. If $F_{k,n}$ denotes the c.d.f. of $\esd( W_{N_k,n}(\omega)/\sqrt {N_k})$, then it is easy to see that
\[
 \chi_{k,n}(\omega,x)=F_{k,n}^\leftarrow(x)\,.
\]
By the choice of the set $A$, it follows that for fixed $1\le n<\infty$,
\[
 \lim_{k\to\infty}F_{k,n}(x)=F_n(x)
\]
for all $x$ which is a continuity point of $F_n$. Therefore, by standard analytic arguments (see for example the proof of Theorem 25.6, page 333 in \cite{billingsley:1995}), \eqref{t6.eq3} follows, which in turn establishes \eqref{t6.eq2}. 

The next task is to show that for fixed $1\le n<\infty$, the family 
\[
 \{\chi^2_{k,n}:1\le k<\infty\}\text{ is uniformly integrable}.
\]
To that end it suffices to show that
\[
 \sup_{1\le k<\infty}\mathbb E\left(\chi^4_{k,n}\right)<\infty\,.
\]
Fix $n$ and notice that
\begin{eqnarray*}
 \mathbb E\left(\chi^4_{k,n}\right)&=&N_k^{-3}\E\Tr\left(W_{N_k,n}^4\right)\\
 &\to&\beta_{n,4}\text{ as }k\to\infty\,,
\end{eqnarray*}
the last step following by \eqref{ac.p1.eq1}. This establishes the uniform integrability, which along with \eqref{t6.eq2}, proves that
\begin{equation}\label{t6.eq4}
 \lim_{k\to\infty}\mathbb E\left[\left(\chi_{k,n}-\chi_{\infty,n}\right)^2\right]=0\text{ for all }1\le n<\infty\,.
\end{equation}

Our final claim is that
\begin{equation}\label{t6.eq5}
 \lim_{n\to\infty}\limsup_{k\to\infty}\mathbb E\left[\left(\chi_{k,n}-\chi_{k,\infty}\right)^2\right]=0\,.
\end{equation}
To that end, notice that
\begin{eqnarray*}
 \mathbb E\left[\left(\chi_{k,n}-\chi_{k,\infty}\right)^2\right]&=&N_k^{-2}\E\sum_{j=1}^{N_k}\left[\lambda_j(W_{N_k,n})-\lambda_j(W_{N_k,\infty})\right]^2\\
 &\le&N_k^{-2}\E\Tr\left[\left(W_{N_k,n}-W_{N_k,\infty}\right)^2\right]\\
 &\le&C\sum_{m,l\in\bbz:|m|\vee|l|>n}c_{m,l}^2\,,
\end{eqnarray*}
for some finite constant $C$. The inequality in the second line is the Hoffman-Wielandt inequality; see Lemma 2.1.19 on page 21 in \cite{anderson:guionnet:zeitouni:2010}. This completes the proof of \eqref{t6.eq5}.

Fact \ref{f4} along with \eqref{t6.eq4} and \eqref{t6.eq5} shows that there exists $\chi_{\infty,\infty}\in L^2(\Omega\times(0,1))$ such that
\begin{equation}\label{t6.eq7}
 \lim_{n\to\infty}\mathbb E\left[\left(\chi_{\infty,n}-\chi_{\infty,\infty}\right)^2\right]=0\,.
\end{equation}
It is easy to see that for all $n<\infty$, $\chi_{\infty,n}$ has law $\mu_{f,n}$. Therefore, by \eqref{t1.eq2} and \eqref{t6.eq7}, it follows that law of $\chi_{\infty,\infty}$ is $\mu_f$. Recall that $\beta_{n,2}$, as defined in \eqref{eq.defbeta}, is the second moment of $\mu_{f,n}$. \comment{correction-20} 
Equation  \eqref{t6.eq7} furthermore ensures that
\begin{eqnarray*}
 \int_\bbr x^2\mu_f(dx)
 &=&\lim_{n\to\infty}\int_\bbr x^2\mu_{f,n}(dx)\\
 &=&\lim_{n\to\infty}\beta_{n,2}\\
 &=&\lim_{n\to\infty}2\E(Y_{0,0,n}^2)\\
 &=&2\E(Y_{0,0}^2)\\
 &=&2\int_{[-\pi,\pi]^2} f(x,y)dxdy\,,
\end{eqnarray*}
where $Y_{0,0}$ and $Y_{0,0,n}$ are as in \eqref{eq.defy} and \eqref{eq.defyijn} respectively.
This completes the proof.
\end{proof}

We now proceed towards the proof of Theorem \ref{t2}. For that, we shall need the following two facts, the first of which is a simple consequence of the H\"older inequality. \comment{correction-21,22}

\begin{fact} \label{fact.holder}
Suppose for some integer $k\ge1$ and a measure space $(\Sigma,\Xi,m)$, the functions $\{f_{in}:1\le i\le k,1\le n\le\infty\}$ are in $L^k(\Sigma)$. Furthermore, assume that for all fixed $1\le i\le k$,
\[
 f_{in}\to f_{i\infty},\,\text{as }n\to\infty\text{ in }L^k\,.
\]
Then, the product $f_{1n}\ldots f_{kn}\in L^1(\Sigma)$ for all $1\le n\le\infty$, and
\[
 f_{1n}\ldots f_{kn}\to f_{1\infty}\ldots f_{k\infty},\text{ as }n\to\infty\text{ in }L^1\,.
\]
\end{fact}

The second fact is a restatement of Theorem 3.4.4, page 146 in \cite{krantz:1999}.

\begin{fact}\label{fact.lp}
 Assume that for some $p\in(1,\infty)$, $h\in L^p\left([-\pi,\pi]^2,\bbc\right)$, that is, it is a function from $[-\pi,\pi]^2$ to $\bbc$ with finite $L^p$ norm. Define
\[
 \hat h_{jk}:=\frac1{2\pi}\int_{[-\pi,\pi]^2} e^{-\io(jx+ky)}h(x,y)dxdy,\,j,k\in\bbz\,.
\]
Then,
\[
 \frac1{2\pi}\sum_{j,k=-n}^n\hat h_{j,k}e^{\io(jx+ky)}\to h(x,y)\text{ in the }L^p\text{ norm, as }n\to\infty\,.
\]
\end{fact}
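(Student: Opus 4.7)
The plan is a three-step scheme: establish uniform $L^p$-boundedness of the square partial sum operator, verify convergence on a dense subspace of trigonometric polynomials, and combine the two by a standard $\vep/3$-argument. Denote by $S_n : L^p([-\pi,\pi]^2,\bbc)\to L^p([-\pi,\pi]^2,\bbc)$ the operator
\[
(S_n h)(x,y):=\frac{1}{2\pi}\sum_{j,k=-n}^{n}\hat h_{j,k}\,e^{\io(jx+ky)},
\]
so that the goal is $\|S_n h-h\|_p\to 0$ for every $h\in L^p$.

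The cornerstone is the classical one-variable M.\ Riesz theorem: for $1<p<\infty$, the 1D partial sum operator $(S_n^{(1)}g)(x):=\frac{1}{2\pi}\sum_{|j|\le n}\hat g_j e^{\io jx}$ is uniformly bounded on $L^p([-\pi,\pi],\bbc)$ with norm at most some constant $C_p$. This is equivalent to the $L^p$-boundedness of the conjugate function (Hilbert transform) on the circle, which follows from Calder\'on--Zygmund / Marcinkiewicz interpolation between the trivial $L^2$ estimate and the weak-type $(1,1)$ bound. With this in hand, the 2D square partial sum factors as $S_n=S_n^{(x)}\circ S_n^{(y)}$, where $S_n^{(x)}$ (resp.\ $S_n^{(y)}$) applies $S_n^{(1)}$ in the first (resp.\ second) variable while the other is held fixed. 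An application of Fubini combined with Minkowski's integral inequality yields $\|S_n^{(x)}\|_{L^p\to L^p}\le C_p$, and similarly for $S_n^{(y)}$. Hence $\|S_n\|_{L^p\to L^p}\le C_p^2$ uniformly in $n$.

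For convergence on a dense subspace, trigonometric polynomials $P(x,y)=\sum_{|j|,|k|\le N_0}a_{jk}e^{\io(jx+ky)}$ are dense in $L^p([-\pi,\pi]^2,\bbc)$ by Stone--Weierstrass together with density of continuous functions, and $S_n P = P$ identically as soon as $n\ge N_0$. Given $h\in L^p$ and $\vep>0$, choose such a $P$ with $\|h-P\|_p<\vep/(C_p^2+1)$; then for $n\ge N_0$,
\[
\|S_nh-h\|_p\le\|S_n(h-P)\|_p+\|S_nP-P\|_p+\|P-h\|_p\le(C_p^2+1)\|h-P\|_p<\vep,
\]
which completes the argument.

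The only nontrivial ingredient is the one-dimensional M.\ Riesz theorem invoked above; this is precisely what restricts the statement to $1<p<\infty$ and reflects the failure of the Hilbert transform on $L^1$ and $L^\infty$. For $p=2$ the proof collapses to Parseval's identity with $C_p=1$ and becomes entirely elementary, while for general $p\in(1,\infty)$ this Hilbert transform estimate is the main obstacle and the reason the result is typically cited from a harmonic analysis reference such as \cite{krantz:1999} rather than reproved in detail.
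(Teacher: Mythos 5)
Your argument is correct, but it is worth noting that the paper does not prove this statement at all: it is stated as a Fact and simply cited as a restatement of Theorem 3.4.4, page 146 in \cite{krantz:1999}. What you have written is essentially the standard proof that underlies that reference: uniform $L^p$-boundedness of the one-dimensional partial-sum operator via the M.~Riesz/Hilbert-transform theorem, tensorization to the square partial sums on $[-\pi,\pi]^2$ by applying the one-variable operator in each coordinate (Fubini alone already gives the bound $C_p^2$; the appeal to Minkowski's integral inequality is superfluous), reproduction of trigonometric polynomials for $n$ large, density of such polynomials in $L^p$, and the usual $\vep/3$ argument. All of these steps are sound, and you correctly identify that the Riesz theorem is the only deep ingredient and the reason for the restriction $1<p<\infty$. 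The one small piece of bookkeeping to watch is the normalization: with the paper's convention, the coefficient $\hat h_{jk}$ carries a factor $(2\pi)^{-1}$ and the partial sum another $(2\pi)^{-1}$, giving the correct total $(2\pi)^{-2}$ on the square, so your one-variable operators $S_n^{(x)}$, $S_n^{(y)}$ must each be taken with the reproducing normalization $(2\pi)^{-1}$ per variable for the identity $S_n=S_n^{(x)}\circ S_n^{(y)}$ and the identity $S_nP=P$ ($n\ge N_0$) to hold exactly; this is trivial to fix and does not affect the substance. In short, your proposal supplies a complete self-contained proof where the paper chose to outsource the result to a harmonic analysis text.
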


The first step towards proving Theorem \ref{t2} is the following lemma.

\begin{lemma}\label{ac.l5}
 If $f$ is a non-negative trigonometric polynomial defined on $[-\pi,\pi]^2$, that is,
\[
 f(x,y):=\sum_{j,k=-n}^na_{jk}e^{\io(jx+ky)}\ge0\,,
\]
for some finite $n\ge1$, and real numbers $(a_{jk}:1\le j,k\le n)$, then for all fixed $m\ge1$,
\begin{eqnarray}
 &&\int_\bbr x^{2m}\mu_f(dx)\nonumber\\
 &=&\sum_{\sigma\in NC_2(2m)}\sum_{k\in S(\sigma)}\prod_{(u,v)\in\sigma}\int_{[-\pi,\pi]^2}e^{\io(k_ux+k_vy)}[f(x,-y)+\label{ac.l5.claim1}\\
 &&\text{ }\,\text{ }\,\text{ }\,\text{ }\,\text{ }\,\text{ }\,\text{ }\,\text{ }\,\text{ }\,\text{ }\,\text{ }\,f(-y,x)]dxdy\nonumber\\
 &=&(2\pi)^{m-1}\sum_{\sigma\in NC_2(2m)}\int_{[-\pi,\pi]^{m+1}}L_{\sigma,f}(x)dx\,,\label{ac.l5.claim2}
\end{eqnarray}
where $S(\sigma)$ is as in \eqref{eq.defS}.
\end{lemma}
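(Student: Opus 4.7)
The plan is to exploit the approximation scheme already in place: Proposition \ref{ac.p1} identifies a probability measure $\mu_{f,n}$ with $2m$-th moment $\beta_{n,2m}$, and Theorem \ref{t1} gives $\mu_{f,n}\weak\mu_f$. I would then (a) identify $\lim_n \beta_{n,2m}$ with \eqref{ac.l5.claim1}, (b) upgrade the weak convergence to convergence of $2m$-th moments, and (c) pass from \eqref{ac.l5.claim1} to \eqref{ac.l5.claim2} by Plancherel.

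For (a), since $Y_{i,j,n}\to Y_{i,j}$ in $L^2$, Fact \ref{fact:linprocess} gives $\hat f_n(u,v)\to R_f(u,v):=\int_{[-\pi,\pi]^2}f(x,y)e^{\io(ux+vy)}dxdy$ pointwise in $(u,v)$. Because $f$ is a trigonometric polynomial, $R_f$ is supported on a finite set $F\subset\bbz^2$, so for each fixed $\sigma\in NC_2(2m)$ the summand in $\beta_{n,2m}$ is asymptotically zero unless every $(k_u,-k_v)$ lies in $F$, leaving a finite sum; dominated convergence then handles the exchange of limit and sum. The change of variable $y\mapsto -y$ combined with the evenness relation \eqref{eq.even} of $f$ gives $R_f(k_u,-k_v)=\int f(x,-y)e^{\io(k_u x+k_v y)}dxdy$ and, after also swapping the roles of $x,y$, $R_f(k_v,-k_u)=\int f(-y,x)e^{\io(k_u x+k_v y)}dxdy$, yielding exactly \eqref{ac.l5.claim1}.

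For (c), set $G(x,y):=f(x,-y)+f(-y,x)$, which is a non-negative trigonometric polynomial and is even by \eqref{eq.even}. Then $L_{\sigma,f}(x)=\prod_{(u,v)\in\sigma}G(x_{\mathcal T_\sigma(u)},x_{\mathcal T_\sigma(v)})$. Expanding each $G$ in its (finite) Fourier series and integrating term-by-term on $[-\pi,\pi]^{m+1}$, the integral over each $x_s$ produces a factor of $2\pi$ if and only if the coefficient of $x_s$ in the combined exponent vanishes; but that coefficient is exactly $\sum_{j\in V_s}k_j$, which is the constraint defining $S(\sigma)$. The $(2\pi)^{m+1}$ from the $m+1$ one-dimensional integrals, combined with the $(2\pi)^{-2}$ implicit in each of the $m$ Fourier coefficients $\hat G(k_u,k_v)$, produces the overall prefactor $(2\pi)^{m-1}$, giving \eqref{ac.l5.claim2}.

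The main obstacle is step (b): weak convergence alone does not imply convergence of the unbounded $2m$-th moment, so uniform integrability of $x\mapsto|x|^{2m}$ under $\mu_{f,n}$ must be established. I would proceed along the lines of the proof of Theorem \ref{t6}, using a Cantor-diagonal subsequence and the Hoffman--Wielandt inequality to bound $\sum_j(\lambda_j(W_{N,n})-\lambda_j(W_{N,\infty}))^2$ by $\|W_{N,n}-W_{N,\infty}\|_F^2=O(N^2\sum_{|k|\vee|l|>n}c_{k,l}^2)$, and interpolating this $\ell^2$ bound with a uniform $\ell^\infty$ bound on eigenvalues coming from $\|f\|_\infty<\infty$ to transfer higher moments. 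An alternative that avoids the double limit is to compute $\int x^{2m}d\mu_f$ directly from $N^{-(m+1)}\E[\Tr(W_N^{2m})]$, re-running the combinatorial analysis of Proposition \ref{ac.p1} with $R_f$ in place of $\hat f_n$; this is clean because $R_f$ has finite support, so the sum over $k\in S(\sigma)$ is automatically finite and no truncation is required.
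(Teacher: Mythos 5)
Your closing ``alternative'' is in fact the paper's proof, and it is the route you should lead with. Because $f$ is a trigonometric polynomial of degree $n$, its Fourier coefficients $R_G(k,l):=\int_{[-\pi,\pi]^2}e^{\io(kx+ly)}f(x,y)\,dx\,dy$ vanish for $|k|\vee|l|>n$. The paper introduces a stationary Gaussian process $(G_{i,j})$ with this covariance kernel, observes that the finite range of $R_G$ lets the combinatorial analysis of Proposition~\ref{ac.p1} run verbatim with $R_G$ in place of $\hat f_n$ (the sum over $S(\sigma)$ is then automatically finite), and concludes that the moments so obtained are those of $\mu_f$, since by Theorem~\ref{t1} the LSD of the matrix built from $(G_{i,j})$ is $\mu_f$. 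No approximation by $\mu_{f,n}$ and no passage $n\to\infty$ is involved. Your step (c) --- writing $g(x,y):=f(x,-y)+f(-y,x)$, expanding it in its finite Fourier series, integrating term by term over $[-\pi,\pi]^{m+1}$ and bookkeeping the powers of $2\pi$ against the constraint defining $S(\sigma)$ --- matches the paper's derivation of \eqref{ac.l5.claim2}, and your use of the evenness \eqref{eq.even} to rewrite $R_G(k_v,-k_u)$ as $\int f(-y,x)e^{\io(k_ux+k_vy)}\,dx\,dy$ is correct.

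Your primary route (a)--(b), by contrast, has the real gap that you yourself flag. In (a), since $\sqrt f$ is generally not a trigonometric polynomial even when $f$ is, the truncated covariances $\hat f_n$ do not have uniformly bounded support; the sum defining $\beta_{n,2m}$ is over an unbounded sublattice, so ``dominated convergence'' would require an explicit, summable, $n$-uniform dominating function for $\prod_{(u,v)\in\sigma}|\hat f_n(k_u,-k_v)+\hat f_n(k_v,-k_u)|$ over $k\in S(\sigma)$, which you do not produce. In (b), upgrading $\mu_{f,n}\weak\mu_f$ to convergence of $2m$-th moments needs uniform integrability of $|x|^{2m}$, i.e.\ a uniform bound on $\beta_{n,2m+2}$; the Hoffman--Wielandt argument in the proof of Theorem~\ref{t6} only controls the $\ell^2$ distance of spectra (second moments), and the sketched ``interpolation with an $\ell^\infty$ eigenvalue bound'' is neither carried out nor obviously available (an $\ell^\infty$ bound on eigenvalues of $W_{N,n}/\sqrt N$ uniform in both $N$ and $n$ does not follow from $\|f\|_\infty<\infty$ without further work). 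So the main route is incomplete, but the alternative you identify is correct, clean, and exactly what the paper does.
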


\begin{proof}
 Since $f$ is a trigonometric polynomial, it is integrable, and hence there exists a stationary Gaussian process $(G_{i,j}:i,j\in\bbz)$ with mean zero, and
\begin{equation*}
 R_G(k,l):=\E(G_{0,0}G_{k,l})=\int_{[-\pi,\pi]^2} e^{\io(kx+ly)}f(x,y)dxdy,\,k,l\in\bbz\,.
\end{equation*}
The hypothesis ensures that $R_G(k,l)=0$ if $|k|\vee|l|>n$. Hence, exactly same arguments as those in the proof of Proposition \ref{ac.p1} will show that for fixed $m\ge1$,
\begin{eqnarray*}
 \int_\bbr x^{2m}\mu_f(dx)&=&\sum_{\sigma\in NC_2(2m)}\sum_{k\in S(\sigma)}\prod_{(u,v)\in\sigma}\left[R_G(k_u,-k_v)+R_G(k_v,-k_u)\right]\\
 &=&\sum_{\sigma\in NC_2(2m)}\sum_{k\in S(\sigma)}\prod_{(u,v)\in\sigma}\overline R_G(k_u,k_v)\,,
\end{eqnarray*}
where\comment{correction-24}
\[
 \overline R_G(k,l):=R_G(k,-l)+R_G(l,-k)\,.
\]
Defining
\[
 g(x,y):=f(x,-y)+f(-y,x)\,,
\]
it is easy to see from \eqref{eq.even} using a change of variable that \comment{correction-25} 
\begin{equation}\label{ac.l5.eq3}
 \int_{[-\pi,\pi]^2}e^{\io(kx+ly)}g(x,y)dxdy=\overline R_G(k,l),\,u,v\in\bbz\,,
\end{equation}
which shows \eqref{ac.l5.claim1}.

Therefore, to complete the proof, it suffices to show that
\begin{equation}\label{ac.l5.eq2}
 \sum_{k\in S(\sigma)}\prod_{(u,v)\in\sigma}\overline R_G(k_u,k_v)=(2\pi)^{m-1}\int_{[-\pi,\pi]^{m+1}}L_{\sigma,f}(x)dx\,,
\end{equation}
for all $\sigma\in NC_2(2m)$. To that end, fix $\sigma$, and notice that \eqref{ac.l5.eq3} implies that
\[
 g(x,y)=(2\pi)^{-2}\sum_{k,l=-n}^n\overline R_G(k,l)e^{-\io(kx+ly)}\,,
\]
for almost all $x,y$. Observe that for all $k\in\{-n,\ldots,n\}^{2m}$ and\\ $x\in[-\pi,\pi]^{m+1}$,
\[
 \sum_{(u,v)\in\sigma}\left[k_ux_{\mathcal T_\sigma(u)}+k_vx_{\mathcal T_\sigma(v)}\right]=\sum_{l=1}^{m+1}x_l\sum_{j\in V_l}k_j\,,
\]
and hence \comment{correction-26, 27}
\begin{eqnarray*}
 &&\int_{[-\pi,\pi]^{m+1}}L_{\sigma,f}(x)dx\\
 &=&\int_{[-\pi,\pi]^{m+1}}\left[\prod_{(u,v)\in\sigma}g\left(x_{\mathcal T_\sigma(u)},x_{\mathcal T_\sigma(v)}\right)\right]dx\\
 &=&(2\pi)^{-2m}\int_{[-\pi,\pi]^{m+1}}\Biggl[\sum_{k\in\{-n,\ldots,n\}^{2m}}\exp\left(\iota\sum_{l=1}^{m+1}x_l\sum_{j\in V_l}k_j\right)\\ 
 &&\,\,\,\,\,\,\,\,\,\,\,\,\,\,\,\,\,\,\prod_{(u,v)\in\sigma}\overline R_G(k_u,k_v)\Biggr]dx\\
 &=&(2\pi)^{1-m}\sum_{k\in\{-n,\ldots,n\}^{2m}\cap S(\sigma)}\prod_{(u,v)\in\sigma}\overline R_G(k_u,k_v)\\
 &=&(2\pi)^{1-m}\sum_{k\in S(\sigma)}\prod_{(u,v)\in\sigma}\overline R_G(k_u,k_v)\,.
\end{eqnarray*}
This shows \eqref{ac.l5.eq2} which in turn establishes \eqref{ac.l5.claim2} and hence completes the proof.
\end{proof}

The following lemma will also be needed for the proof of Theorem \ref{t2}.

\begin{lemma}\label{ac.l3}
 Suppose that for all $1\le n\le\infty$, $g_n$ is a non-negative, integrable and even function on $[-\pi,\pi]^2$ such that as $n\to\infty$,
\[
{g_n} \to{g_\infty}\text{ in }L^1\,.
\]
Then,
\[
 \mu_{g_n}\weak\mu_{g_\infty}\text{ as }n\to\infty\,.
\]
\end{lemma}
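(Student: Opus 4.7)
The plan is to reduce to Theorem \ref{t1} by coupling all the processes corresponding to the densities $g_n$ on a single probability space, and then to invoke Fact \ref{f4}. The starting observation is the elementary inequality $(\sqrt{a}-\sqrt{b})^2\le|a-b|$ for $a,b\ge 0$, which forces
\[
\|\sqrt{g_n}-\sqrt{g_\infty}\|_2^2\le\|g_n-g_\infty\|_1\to0\,.
\]
For $1\le n\le\infty$, define the (real, by the evenness assumption on $g_n$) Fourier coefficients
\[
c^{(n)}_{k,l}:=(2\pi)^{-1}\int_{[-\pi,\pi]^2}e^{-\io(kx+ly)}\sqrt{g_n(x,y)}\,dxdy,\quad k,l\in\bbz\,.
\]
Parseval's identity then gives $\sum_{k,l\in\bbz}(c^{(n)}_{k,l}-c^{(\infty)}_{k,l})^2\to 0$ as $n\to\infty$.

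Fixing one collection of i.i.d.\ $N(0,1)$ variables $(U_{i,j}:i,j\in\bbz)$, set $Y^{(n)}_{i,j}:=\sum_{k,l\in\bbz}c^{(n)}_{k,l}U_{i-k,j-l}$ and define the symmetric matrix $W^{(n)}_N(i,j):=Y^{(n)}_{i,j}+Y^{(n)}_{j,i}$ for $1\le i,j\le N$. By Fact \ref{fact:linprocess} the spectral density of $Y^{(n)}$ is $g_n$, so Theorem \ref{t1} yields $\esd(W^{(n)}_N/\sqrt N)\to\mu_{g_n}$ weakly in probability as $N\to\infty$, for every $1\le n\le\infty$. Using the bound $L(\esd(A),\esd(B))^3\le N^{-1}\Tr((A-B)^2)$ (Corollary A.41 on page 502 of \cite{bai:silverstein:2010}) applied to $A=W^{(n)}_N/\sqrt N$ and $B=W^{(\infty)}_N/\sqrt N$, taking expectations, and noting that $\E[(Y^{(n)}_{i,j}-Y^{(\infty)}_{i,j})^2]=\sum_{k,l}(c^{(n)}_{k,l}-c^{(\infty)}_{k,l})^2$ is independent of $(i,j)$, we obtain
\[
\E L^3\!\left(\esd(W^{(n)}_N/\sqrt N),\esd(W^{(\infty)}_N/\sqrt N)\right)\le 4\sum_{k,l\in\bbz}(c^{(n)}_{k,l}-c^{(\infty)}_{k,l})^2\,,
\]
a bound that is uniform in $N$ and vanishes as $n\to\infty$.

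These two displays are precisely the hypotheses required by Fact \ref{f4} in the complete metric space of probability measures on $\bbr$ under the L\'evy distance, after setting $X_{nN}:=\esd(W^{(n)}_N/\sqrt N)$, $X_{n\infty}:=\mu_{g_n}$ and $X_{\infty N}:=\esd(W^{(\infty)}_N/\sqrt N)$. The fact produces a deterministic limit $X_{\infty\infty}$ with $\mu_{g_n}\to X_{\infty\infty}$ as $n\to\infty$ and $\esd(W^{(\infty)}_N/\sqrt N)\to X_{\infty\infty}$ in probability as $N\to\infty$. Since $\esd(W^{(\infty)}_N/\sqrt N)$ also converges to $\mu_{g_\infty}$ by Theorem \ref{t1}, one identifies $X_{\infty\infty}=\mu_{g_\infty}$, yielding $\mu_{g_n}\weak\mu_{g_\infty}$. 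The only nontrivial analytic input is the passage from $L^1$-convergence of $g_n$ to $L^2$-convergence of $\sqrt{g_n}$, which is handled by the elementary inequality stated at the outset; everything else is a mechanical assembly of Theorem \ref{t1}, the Hoffman-Wielandt/L\'evy-distance estimate, and the double-limit principle of Fact \ref{f4}.
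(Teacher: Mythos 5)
Your proof is correct and follows essentially the same route as the paper's: build the stationary Gaussian processes with spectral densities $g_n$ from a single i.i.d.\ Gaussian field using the Fourier coefficients of $\sqrt{g_n}$, apply Theorem \ref{t1} for each fixed $n$, bound the L\'evy distance between the coupled $\esd$'s via the Hoffman--Wielandt inequality and Parseval, and close the diagram with Fact \ref{f4}. The only cosmetic difference is that you spell out the elementary inequality $(\sqrt a-\sqrt b)^2\le|a-b|$ behind the passage from $L^1$-convergence of $g_n$ to $L^2$-convergence of $\sqrt{g_n}$, which the paper states without comment.
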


\begin{proof}
The hypothesis can be restated as
\begin{equation}\label{ac.l3.eq2}
 \sqrt{g_n} \to\sqrt{g_\infty}\text{ in }L^2\,.
\end{equation}

Let $(G_{i,j}:i,j\in\bbz)$ be a family of i.i.d. $N(0,1)$ random variables.
 Define
\[
 d_{k,l,n}:=(2\pi)^{-1}\int_{[-\pi,\pi]^2} e^{-\io(kx+ly)}\sqrt{g_n(x,y)}dxdy,\,k,l\in\bbz,1\le n\le\infty\,,
\]
and
\[
 H_{i,j,n}:=\sum_{k,l\in\bbz}d_{k,l,n}G_{i-k,j-l},\,i,j\in\bbz,1\le n\le\infty\,.
\]
By Fact \ref{fact:linprocess}, it follows that for all $1\le n\le\infty$, the family $(H_{i,j,n}:i,j\in\bbz)$ is a stationary Gaussian process whose spectral density is $g_n$. For every $1\le n\le\infty$ and $1\le N<\infty$, define an $N\times N$ matrix $A_{N,n}$ by
\[
 A_{N,n}(i,j):=\left(H_{i,j,n}+H_{j,i,n}\right)/\sqrt N,\,1\le i,j\le N\,.
\]
By Theorem \ref{t1}, it follows that for all $1\le n\le\infty$,
\begin{equation}\label{ac.l3.eq1}
 L\left(\esd(A_{N,n}),\mu_{g_n}\right)\prob0\text{ as }N\to\infty\,.
\end{equation}
Notice that for fixed $1\le N,n<\infty$, by arguments similar to those leading to \eqref{ac.t1.eq2} from \eqref{ac.t1.eq1},
\begin{eqnarray*}
 \E\Tr\left[(A_{N,n}-A_{N,\infty})^2/N\right]&\le&4\sum_{k,l\in\bbz}\left(d_{k,l,n}-d_{k,l,\infty}\right)^2\\
 &=&4\int_{[-\pi,\pi]^2}\left(\sqrt{g_n(x,y)}-\sqrt{g_\infty(x,y)}\right)^2dxdy\,,
\end{eqnarray*}
the last equality following from Parseval. Therefore, by \eqref{ac.l3.eq2}, it holds that for all $\vep>0$,
\[
 \lim_{n\to\infty}\limsup_{N\to\infty}P\left[L\left(\esd(A_{N,n}),\esd(A_{N,\infty})\right)>\vep\right]=0\,.
\]
The above, along with \eqref{ac.l3.eq1} and Fact \ref{f4} proves the claim of the lemma.
\end{proof}

\begin{proof}[Proof of Theorem \ref{t2}]
Fix $m\ge2$, and assume that $\|f\|_m<\infty$. Let $c_{kl}$ be as in \eqref{eq.defc}, and define for $n\ge1$,
\begin{equation}\label{eq.deffn}
 f_n(x,y):=\left[\frac1{2\pi}\sum_{k,l=-n}^nc_{kl}e^{\io(kx+ly)}\right]^2,\,-\pi\le x,y\le\pi\,.
\end{equation}
By Fact \ref{fact.lp}, it follows that
\begin{equation}\label{t2.eq1}
 f_n\to f\text{ in }L^m\text{ norm, as }n\to\infty\,.
\end{equation}
Fix $\sigma\in NC_2(2m)$.
Equation \eqref{t2.eq1} along with the observation that for all $(u,v)\in\sigma$, ${\mathcal T}_\sigma(u)\neq{\mathcal T}_\sigma(v)$ implies that\newcomment{Comment 12}
\[
\lim_{n\to\infty}\int_{\bbr^{m+1}}\left|f_n\left(x_{{\mathcal T}_\sigma(u)},-x_{{\mathcal T}_\sigma(v)}\right)-f\left(x_{{\mathcal T}_\sigma(u)},-x_{{\mathcal T}_\sigma(v)}\right)\right|^mdx_1\ldots dx_{m+1}=0\,.
\]
This with an appeal to Fact \ref{fact.holder}, implies that
\begin{equation}\label{t2.eq2}
 \lim_{n\to\infty}\int_{[-\pi,\pi]^{m+1}}L_{\sigma,f_n}(x)dx=\int_{[-\pi,\pi]^{m+1}}L_{\sigma,f}(x)dx\,.
\end{equation}
Equation \eqref{t2.eq1} along with Lemma \ref{ac.l3} and the observation that $f_n$ is a non-negative even function implies that
\begin{equation}\label{t2.eq3}
 \mu_{f_n}\weak\mu_f\text{ as }n\to\infty\,.
\end{equation}
Therefore, by Fatou's lemma, it follows that
\begin{eqnarray}
\nonumber &&\int_\bbr x^{2m}\mu_f(dx)\\
\label{t2.eq4} &\le&\liminf_{n\to\infty}\int_\bbr x^{2m}\mu_{f_n}(dx)\\
\nonumber &=&\liminf_{n\to\infty}(2\pi)^{m-1}\sum_{\sigma\in NC_2(2m)}\int_{[-\pi,\pi]^{m+1}}L_{\sigma,f_n}(x)dx\\
\label{t2.eq5} &=&(2\pi)^{m-1}\sum_{\sigma\in NC_2(2m)}\int_{[-\pi,\pi]^{m+1}}L_{\sigma,f}(x)dx<\infty\,,
\end{eqnarray}
the equality in the last two  lines following from Lemma \ref{ac.l5} and \eqref{t2.eq2} respectively. This completes the proof.
\end{proof}

\begin{proof}[Proof of Theorem \ref{theorem.new}]
Assume that $\|f\|_\infty<\infty$. In the proof of Theorem~\ref{t2} above,\comment{correction-29}
 it has essentially been shown that the limit in \eqref{t2.eq4} exists, and equals the quantity in \eqref{t2.eq5}, that is,
\[
 \lim_{n\to\infty}\int_\bbr x^{2m}\mu_{f_n}(dx)=
\]
\[
(2\pi)^{m-1}\sum_{\sigma\in NC_2(2m)}\int_{[-\pi,\pi]^{m+1}}L_{\sigma,f}(x)dx\text{ for all }m\ge1\,,
\]
where $f_n$ is as in \eqref{eq.deffn},
and
\begin{eqnarray*}
 &&\limsup_{m\to\infty}\left[(2\pi)^{m-1}\sum_{\sigma\in NC_2(2m)}\int_{[-\pi,\pi]^{m+1}}L_{\sigma,f}(x)dx\right]^{1/2m}\\
 &\le&\limsup_{m\to\infty}\left[(2\pi)^{2m}(2\|f\|_\infty)^{m+1}\#NC_2(2m)\right]^{1/2m}\\
 &=&\bar R\,.
\end{eqnarray*}
This shows that there exists an even probability measure $\mu^*$ supported on $[-\bar R,\bar R]$ such that
\[
 \int_\bbr x^{2m}\mu^*(dx)=\sum_{\sigma\in NC_2(2m)}\int_{[-\pi,\pi]^{m+1}}L_{\sigma,f}(x)dx\text{ for all }m\ge1\,,
\]
and
\[
 \mu_{f_n}\weak\mu^*\text{ as }n\to\infty\,.
\]
This, along with \eqref{t2.eq3} completes the proof of both parts.
\end{proof}

For the proof of Theorem \ref{thm.stieltjes}, we shall need the following result.

\begin{lemma}\label{stieltjes.l1}
Define for all $m\ge1$ and $\sigma\in NC_2(2m)$,
\[
 h_\sigma(y):=\int_{-\pi}^\pi\ldots\int_{-\pi}^\pi L_{\sigma,f}(x_1,\ldots,x_m,y)dx_m\ldots dx_1,\,y\in[-\pi,\pi]\,.
\]
 Assume that $\sigma\in NC_2(2m)$ can be written as 
\begin{equation}\label{stieltjes.l1.eq1}
 \sigma=\sigma_1\cup\sigma_2\,,
\end{equation}
where $\sigma_1\in NC_2(2k)$ for some $1\le k\le m-1$, and $\sigma_2$ is a non-crossing pair partition of $\{2k+1,\ldots,2m\}$. Viewing $\sigma_2$ as an element of $NC_2(2m-2k)$ by the obvious relabeling of $2k+1,\ldots,2m$ to $1,\ldots,2m-2k$ respectively, it is true that
\[
 h_\sigma(y)=h_{\sigma_1}(y)h_{\sigma_2}(y),\,y\in[-\pi,\pi]\,.
\]
\end{lemma}

\begin{proof}It is easy to see from \eqref{stieltjes.l1.eq1} and the fact that ${\mathcal T}(2m)=m+1$, that
\begin{equation}\label{stieltjes.l1.eq2}
 \mathcal T_\sigma(j)\in\{1,\ldots,k,m+1\},\mbox{ for }1\le j\le2k\,,
\end{equation}
and
\begin{equation}\label{stieltjes.l1.eq3}
 \mathcal T_\sigma(j)\in\{k+1,\ldots,m+1\},\mbox{ for }2k+1\le j\le2m\,.
\end{equation}
Define
\begin{equation}\label{stieltjes.l1.eq4}
 g(x,y):=f(x,-y)+f(-y,x),\,x,y\in[-\pi,\pi]\,.
\end{equation}
Therefore,
\begin{eqnarray*}
&&L_{\sigma,f}(x)\\
&=&\prod_{(u,v)\in\sigma}g\left(x_{\mathcal T_\sigma(u)},x_{\mathcal T_\sigma(v)}\right)\\
&=&\left(\prod_{(u,v)\in\sigma:u,v\le2k}g\left(x_{\mathcal T_\sigma(u)},x_{\mathcal T_\sigma(v)}\right)\right) \left(\prod_{(u,v)\in\sigma:u,v>2k}g\left(x_{\mathcal T_\sigma(u)},x_{\mathcal T_\sigma(v)}\right)\right)\,.
\end{eqnarray*}
By \eqref{stieltjes.l1.eq2} and \eqref{stieltjes.l1.eq3}, it follows that
\begin{eqnarray*}
&& h_\sigma(x_{m+1})\\
&=&\left(\int_{-\pi}^\pi \ldots\int_{-\pi}^\pi \prod_{(u,v)\in\sigma:u,v\le2k}g\left(x_{\mathcal T_\sigma(u)},x_{\mathcal T_\sigma(v)}\right)dx_k\ldots dx_1\right)\\
&&\left(\int_{-\pi}^\pi \ldots\int_{-\pi}^\pi \prod_{(u,v)\in\sigma:u,v>2k}g\left(x_{\mathcal T_\sigma(u)},x_{\mathcal T_\sigma(v)}\right)dx_m\ldots dx_{k+1}\right)\\
&=&h_{\sigma_1}(x_{m+1})h_{\sigma_2}(x_{m+1})\,.
\end{eqnarray*}
This completes the proof.
\end{proof}

\begin{lemma}\label{stieltjes.l2}
 If
\begin{equation*}
 \sigma=\{(1,2m)\}\cup\sigma_1\,,
\end{equation*}
for some non-crossing pair partition $\sigma_1$ of $\{2,\ldots,2m-1\}$, then
\[
 h_\sigma(z)=\int_{-\pi}^\pi h_{\sigma_1}(y)g(y,z)dy,\,z\in\bbr\,,
\]
where, once again, $\sigma_1$ is viewed as an element of $NC_2(2m-2)$, and $g$ is as in \eqref{stieltjes.l1.eq4}.
\end{lemma}

\begin{proof}
 Throughout the proof, $\sigma_1$ is to be thought of as an element of \\$NC_2(2m-2)$. Clearly,
\begin{eqnarray*}
\mathcal T_\sigma(1)&=&m\,,\\
\mathcal T_\sigma(2m)&=&m+1\,,\\
\mathcal T_\sigma(j)&=&\mathcal T_{\sigma_1}(j-1),\,2\le j\le2m-1\,.
\end{eqnarray*}
The above equations imply that
\[
 L_{\sigma,f}(x)=g(x_m,x_{m+1})L_{\sigma_1}(x_1,\ldots,x_m),\,x\in\bbr^{m+1}\,.
\]
Thus,
\begin{eqnarray*}
 h_\sigma(z)&=&\int_{-\pi}^\pi \ldots \int_{-\pi}^\pi g(x_m,z)L_{\sigma_1}(x_1,\ldots,x_m)dx_m\ldots dx_1\\
&=&\int_{-\pi}^\pi g(x_m,z)\left\{\int_{-\pi}^\pi\ldots\int_{-\pi}^\pi L_{\sigma_1}(x_1,\ldots,x_m)dx_{m-1}\ldots dx_1\right\}dx_m\\
&=&\int_{-\pi}^\pi g(x_m,z)h_{\sigma_1}(x_m)dx_m\,,
\end{eqnarray*}
which completes the proof.
\end{proof}

\begin{proof}[Proof of Theorem \ref{thm.stieltjes}]
Define $H_0(x):=1$ for all $-\pi\le x\le\pi$, and for $m\ge1$,
\[
 H_{2m}(x):=(2\pi)^{m-1}\sum_{\sigma\in NC_2(2m)}h_\sigma(x),\,x\in[-\pi,\pi]\,.
\]
Clearly,
\begin{eqnarray*}
 &&\sup_{-\pi\le x\le\pi}\limsup_{m\to\infty}H_{2m}(x)^{1/2m}\\
 &\le&\lim_{m\to\infty}\left[(2\pi)^{m-1}(4\pi\|f\|_\infty)^m\#NC_2(2m)\right]^{1/2m}\\
 &=&\bar R\,.
\end{eqnarray*}
Therefore, for all $x\in[-\pi,\pi]$, the power series
\[
 {\mathcal H}(z,x):=\sum_{m=0}^\infty\frac{H_{2m}(x)}{z^{2m+1}}
\]
converges in $\{z\in\bbc:|z|>\bar R\}$. Clearly, $\mathcal H$ satisfies claims (1), (2) and (4) of the result. To show claim (3), we derive a recursion for $\mathcal H(z,x)$ using the
  properties of $h_\sigma(x)$.
A Catalan word of length $2m$ has $m$ letters which appear twice and  successive deletion of double letters leads to the empty word. For example the words $abbcca$ and $abccbadd$ are Catalan words while $abab$ and $abccab$ are not. The reader is referred to \cite{bose:sen:2008} for more details on Catalan words.
\comment{correction-30}\newcomment{Comment 13}
Recall that there is a natural bijection between $NC_2(2m)$ and the set of Catalan words of length $2m$ with the understanding that two words will be considered identical if one can be obtained from the other by a relabeling of letters. Keeping this correspondence in mind, by an abuse of notation, we shall now consider $h_w(x)$ for Catalan words $w$, and denote by $NC_2(2m)$ the set of Catalan words of length $2m$.
 Note that any Catalan word $w$
  of length $2m $ can be written as $w=aw_1aw_2$, for some $w_1\in NC_2(2k-2)$
  and $w_2\in NC_2(2m-2k)$.  So if
  $$H_{2m,k}(x):=\sum_{w_1\in NC_2(2k-2)}\sum_{w_2\in NC_2(2m-2k)}h_{aw_1aw_2}(x)\,,$$ then
  $$H_{2m}(x)=(2\pi)^{m-1}\sum_{k=1}^mH_{2m,k}(x).$$
Notice that
  \begin{align*}
    H_{2m,k}(x)
    &=\sum_{w_1\in NC_2(2k-2)}h_{aw_1a}(x)\sum_{w_2\in NC_2(2m-2k)}h_{w_2}(x)\\
    &=\sum_{w_1\in NC_2(2k-2)}\int_{-\pi}^\pi\left[g(x,y)h_{w_1}(y)\right]dy\sum_{w_2\in NC_2(2m-2k)}h_{w_2}(x)\\
    &=(2\pi)^{3-m}\int_{-\pi}^\pi\left[g(x,y)H_{2(k-1)}(y) H_{2(m-k)}(x)\right]dy\,,
  \end{align*}
the equalities in the first two lines following from Lemmas \ref{stieltjes.l1} and \ref{stieltjes.l2} respectively.
  As a consequence,
\begin{equation}\label{thm.stieltjes.eq1}
 H_{2m}(x)=4\pi^2\sum_{k=1}^mH_{2(m-k)}(x) \int_{-\pi}^\pi g(x,y)H_{2(k-1)}(y)dy\,.
\end{equation}
Now by an easy computation it follows that,
$$\mathcal H(z,x)=\sum_{m=0}^{\infty}H_{2m}(x)z^{-(2m+1)}=\frac1z+\frac{4\pi^2}z\mathcal{H}(z,x)\int_{-\pi}^\pi g(x,y)\mathcal{H}(z,y)dy.$$
This shows \eqref{thm.stieltjes.claim1}, that is, claim (3).

Next, we proceed to show uniqueness of the function satisfying (1) - (4). To that end, let $\mathcal{\tilde H}$ be another solution. By (1), it holds that for all fixed $x$, ${\mathcal{\tilde H}}(\cdot,x)$ has a Laurent series expansion on $\{z\in\bbc:|z|>\bar R\}$. By (4), it follows that
\[
 {\mathcal{\tilde H}}(z,x)=\sum_{m=0}^\infty \tilde H_m(x)z^{-m-1},\,(z,x)\in\mathcal D\,,
\]
for some $\tilde H_0(x),\tilde H_1(x),\ldots\in\bbc$, with $\tilde H_0(x)\equiv1$. Condition (3) is equivalent to
\[
 \tilde H_m(x)=\sum_{k=0}^{m-2}4\pi^2\tilde H_k(x)\int_{-\pi}^\pi\tilde H_{m-2-k}(y)g(x,y)dy,\,m\ge1\,.
\]
From here, inductively it can be argued that $\tilde H_m(x)\equiv0$ for all odd $m$, and that \eqref{thm.stieltjes.eq1} holds with $H$ replaced by $\tilde H$. This establishes the uniqueness.

Finally, \eqref{thm.stieltjes.claim2} follows the second claim of Theorem \ref{theorem.new}.
\end{proof}

Next, we proceed towards the proof of Theorem \ref{t3}. The following lemma, which is the first step towards that, proves the result for a special case.

\begin{lemma}\label{ac.l2}
Suppose that $(G_{i,j}:i,j\in\bbz)$ is a stationary Gaussian process whose covariance kernel $R_G(\cdot,\cdot)$ defined by
\[
 R_G(u,v):=\E\left[G_{0,0}G_{u,v}\right],\,u,v\in\bbr\,,
\]
satisfies
\[
 R_G(u,v)=\int_{[-\pi,\pi]^2} e^{\io(ux+vy)}f_G(x)f_G(y)\,dxdy,\,u,v\in\bbr\,,
\]
for some non-negative $f_G(\cdot)$ defined on $[-\pi,\pi]$, and there exists $n$ such that
\begin{equation}\label{ac.l2.eq1}
 R_G(u,v)=0\text{ if }|u|\vee|v|>n\,.
\end{equation}
Then $\esd$ of the $N\times N$ matrix whose $(i,j)$-th entry is $G_{i,j}/\sqrt N$ converges weakly in probability to
\[
 \eta_G\boxtimes WSL(1)\,,
\]
where $\eta_G$ is the law of $f_G(U)\pi2\sqrt2$ and $U$ is a $Uniform(-\pi,\pi)$ random variable.
\end{lemma}

\begin{proof}
 By Theorem \ref{t1}, it follows that the limit exists, say $\mu_G$, and furthermore by the hypothesis \eqref{ac.l2.eq1}, and claim \eqref{ac.l5.claim1} of Lemma \ref{ac.l5}, it holds that
\begin{equation}\label{ac.l2.eq2}
 \int_{\mathbb R} x^{2m}\mu_G(dx)=
\end{equation}
\[
 \sum_{\sigma\in NC_2(2m)}\sum_{k\in S(\sigma)}\prod_{(u,v)\in\sigma}\left[R_G(k_u,-k_v)+R_G(k_v,-k_u)\right],\,m\ge1\,.
\]
Our first claim is that
\begin{equation}\label{ac.l2.eq3}
 R_G(u,-v)+R_G(v,-u)=r_G(u)r_G(v),\,u,v\in\bbz\,,
\end{equation}
where 
\begin{equation}\label{ac.l2.eq4}
 r_G(u):=\sqrt2\int_{-\pi}^\pi e^{\io ux}f_G(x)dx\,.
\end{equation}
To that end, notice that by \eqref{eq.even}, it follows that
\[
 f_G(-x)f_G(-y)=f_G(x)f_G(y)\text{ for almost all }x,y\in[-\pi,\pi]\,.
\]
Integrating out $y$, it follows that $f_G(x)=f_G(-x)$ for almost all $x\in [-\pi,\pi]$.\comment{correction-31} Therefore,
\begin{eqnarray*}
 R_G(u,-v)+R_G(v,-u)&=&\int_{[-\pi,\pi]^2}2\cos(ux-vy)f_G(x)f_G(y)dxdy\\
 &=&r_G(u)r_G(v)\,,
\end{eqnarray*}
where the fact that $f_G(\cdot)$ is even almost surely has been used for the last equality. This establishes \eqref{ac.l2.eq3}.

Our next claim is that
\begin{equation}\label{ac.l2.eq5}
f_G(x)=\frac1{\pi2\sqrt2}\sum_{k=-n}^n r_G(k)e^{-\io kx},\text{ for almost all }x\in[-\pi,\pi]\,.
\end{equation}
The above follows from \eqref{ac.l2.eq4} using Fourier inversion and the fact that for $|u|>n$, $r_G(u)=0$ which is a consequence of \eqref{ac.l2.eq1}.

By \eqref{ac.l2.eq2} and \eqref{ac.l2.eq3}, it follows that for fixed $m\ge1$,
\begin{eqnarray*}
 \int x^{2m}\mu_G(dx)&=&\sum_{\sigma\in NC_2(2m)}\sum_{k\in S(\sigma)}\prod_{(u,v)\in\sigma}r_G(k_u)r_G(k_v)\\
 &=&\sum_{\sigma\in NC_2(2m)}\sum_{k\in S(\sigma)}\prod_{j=1}^{2m}r_G(k_j)\,.
\end{eqnarray*}
Fix $\sigma\in NC_2(2m)$, and let for $i=1,\ldots,m+1$, $l_i$ be the size of $V_i$ which are the blocks of the Kreweras complement of $\sigma$, as in \eqref{eq.defV}. Then, it is easy to see that
\begin{eqnarray*}
\sum_{k\in S(\sigma)}\prod_{j=1}^{2m}r_G(k_j)&=&\prod_{i=1}^{m+1}\sum_{k\in\bbz^{l_i}:k_1+\ldots+k_{l_i}=0}\prod_{j=1}^{l_i}r_G(k_j)\\
&=&\prod_{i=1}^{m+1}(2\pi)^{-1}\int_{-\pi}^\pi \left[f_G(x)\pi2\sqrt2\right]^{l_i}dx\\
&=&\prod_{i=1}^{m+1}\int_\bbr x^{l_i}\eta_G(dx)\,,
\end{eqnarray*}
the second last equality being a consequence of \eqref{ac.l2.eq5}.
Therefore, it follows that
\begin{eqnarray*}
 \int_\bbr x^{2m}\mu_G(dx)&=&\sum_{\sigma\in NC_2(2m)}\prod_{i=1}^{m+1}\int_\bbr x^{l_i}\eta_G(dx)\,.
\end{eqnarray*}
Equation (14.5) on page 228 in \cite{nica:speicher:2006}, which is a consequence of Theorem 14.4 therein, implies that the right hand side of above equals
\comment{correction-32}
\[
\int_\bbr x^{2m}\eta_G\boxtimes WSL(1)\,.
\]
 This shows that $\mu_G=\eta_G\boxtimes WSL(1)$, and thus completes the proof. 
\end{proof}

The next step towards the proof of Theorem \ref{t3} is the following. 

\begin{lemma}\label{ac.l4}
If
\[
 g(x,y):=\frac12\left[f(x,y)+f(y,x)\right],\,-\pi\le x,y\le\pi\,,
\]
then
\[
 \mu_f=\mu_g\,.
\]
\end{lemma}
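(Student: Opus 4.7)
The plan is to reduce to the case of non-negative trigonometric polynomials via approximation, then match moments using Lemma \ref{ac.l5}, and finally pass to the limit with Lemma \ref{ac.l3}. Let $c_{k,l}$ be as in \eqref{eq.defc} and $f_n$ as in \eqref{eq.deffn}, and define the symmetrized companion
\[
 g_n(x,y):=\tfrac12\bigl[f_n(x,y)+f_n(y,x)\bigr]\,.
\]
Each $g_n$ is a non-negative trigonometric polynomial (a convex combination of two such) that inherits evenness in $(x,y)$ from $f_n$. Since $\nu_{ac}$ is a finite measure, $f\in L^1$, so $\sqrt f\in L^2$ and Parseval gives $\sqrt{f_n}\to\sqrt f$ in $L^2$. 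Writing $f_n-f=(\sqrt{f_n}-\sqrt f)(\sqrt{f_n}+\sqrt f)$ and applying Cauchy--Schwarz yields $f_n\to f$ in $L^1$, and by linearity $g_n\to g$ in $L^1$.

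The central step is to show $\mu_{f_n}=\mu_{g_n}$ for each $n$. Since both $f_n$ and $g_n$ are bounded non-negative trigonometric polynomials, the second part of Theorem \ref{t2} makes both $\mu_{f_n}$ and $\mu_{g_n}$ compactly supported, and by Lemma \ref{ac.l5} each $(2m)$-th moment equals $(2\pi)^{m-1}\sum_\sigma\int L_{\sigma,\cdot}(x)\,dx$. A direct expansion using the definition of $g_n$ gives the pointwise identity
\[
 g_n(a,-b)+g_n(-b,a)=\tfrac12\bigl[f_n(a,-b)+f_n(-b,a)+f_n(-b,a)+f_n(a,-b)\bigr]=f_n(a,-b)+f_n(-b,a)\,,
\]
for all $a,b\in[-\pi,\pi]$. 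Since $L_{\sigma,\cdot}$ is precisely a product of such bracketed factors, this forces $L_{\sigma,f_n}\equiv L_{\sigma,g_n}$ for every $\sigma\in NC_2(2m)$. Hence $\mu_{f_n}$ and $\mu_{g_n}$ share all moments, and compact support forces $\mu_{f_n}=\mu_{g_n}$.

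Finally, Lemma \ref{ac.l3} applied to the $L^1$-convergent sequences $f_n\to f$ and $g_n\to g$ gives $\mu_{f_n}\weak\mu_f$ and $\mu_{g_n}\weak\mu_g$, whence $\mu_f=\mu_g$. No serious obstacle is anticipated; the only mildly technical point beyond what is already in the paper is the $L^1$-convergence of $f_n$ to $f$, which follows from the $L^2$-convergence of $\sqrt{f_n}$ as sketched above.
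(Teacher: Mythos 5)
Your proof is correct and follows essentially the same route as the paper's: approximate by the trigonometric polynomials $f_n$ of \eqref{eq.deffn}, observe that $L_{\sigma,f_n}=L_{\sigma,g_n}$ (which you verify by the explicit identity $g_n(a,-b)+g_n(-b,a)=f_n(a,-b)+f_n(-b,a)$, whereas the paper simply asserts the equality a.e.), invoke Lemma \ref{ac.l5} to conclude $\mu_{f_n}=\mu_{g_n}$, and pass to the limit with Lemma \ref{ac.l3} using $L^1$-convergence of $f_n$ and $g_n$. The only cosmetic difference is that you derive $f_n\to f$ in $L^1$ directly from Parseval and Cauchy--Schwarz, while the paper cites \eqref{t2.eq1} with $m=1$; both amount to the same thing since \eqref{t2.eq1} rests on $L^2$-convergence of the partial Fourier sums of $\sqrt f$.
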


\begin{proof}
 For $n\ge1$, let $f_n$ be as in \eqref{eq.deffn}, and define
and
\[
 g_n(x,y):=\frac12\left[f_n(x,y)+f_n(y,x)\right],\,-\pi\le x,y\le\pi\,.
\]
Noticing that for all $\sigma\in\bigcup_{m=1}^\infty NC_2(2m)$, 
\[
 L_{\sigma,f_n}=L_{\sigma,g_n}\text{ a.e.}\,,
\]
it follows by Lemma \ref{ac.l5} that
\[
 \mu_{f_n}=\mu_{g_n}\text{ for all }n\ge1\,.
\]
Using \eqref{t2.eq1} with $m=1$, which is valid because $\|f\|_1<\infty$, it follows that
\[
 {f_n}\to f\text{ in }L^1\text{ as }n\to\infty\,,
\]
from which it follows that
\[
 g_n\to g\text{ in }L^1\,.
\]
Lemma \ref{ac.l3} completes the proof.
\end{proof}

\begin{proof}[Proof of Theorem \ref{t3}]
Define 
\[
 g(x,y):=\frac12\left[f(x,y)+f(y,x)\right],\,-\pi\le x,y\le\pi\,.
\]
In view of Lemma \ref{ac.l4}, it suffices to show that
\begin{equation}\label{t3.eq1}
 \mu_g=\eta_r\boxtimes WSL(1)\,.
\end{equation}
To that end, define
\[
 d_k:=(2\pi)^{-1/2}\int_{-\pi}^\pi e^{-\io kx}\sqrt{r(x)}dx,\,k\in\bbz\,.
\]
Then, it is easy to see that
\begin{equation}\label{t3.eq2}
 (2\pi)^{-1}\int_{[-\pi,\pi]^2} e^{-\io(kx+ly)}\sqrt{g(x,y)}dxdy=d_kd_l,\,k,l\in\bbz\,.
\end{equation}
Define
\[
 g_n(x,y):=\left[(2\pi)^{-1}\sum_{k,l=-n}^nd_kd_le^{\io(kx+ly)}\right]^2,\,-\pi\le x,y\le\pi\,.
\]
Clearly,
\[
 g_n(x,y)=r_n(x)r_n(y)\text{ for all }-\pi\le x,y\le\pi\,,
\]
where
\[
 r_n(x):=\left[(2\pi)^{-1/2}\sum_{k=-n}^nd_ke^{\io kx}\right]^2\,.
\]
Arguments similar to those in the proof of Lemma \ref{ac.l2} show that $g_n(\cdot,\cdot)$ and $r_n(\cdot)$ take values in the non-negative half line.
By the same lemma, it follows that
\[
 \mu_{g_n}=\eta_{r_n}\boxtimes WSL(1),\,n\ge1\,,
\]
where $\eta_{r_n}$ is the law of $2^{3/2}\pi r_n(U)$, $U$ being a  $Uniform(-\pi,\pi)$ random variable. By the Fourier inversion theorem, it follows that
\[
 \lim_{n\to\infty}\int_{-\pi}^\pi\left|r_n(x)-r(x)\right|dx=0\,,
\]
and hence 
\[
\eta_{r_n}\weak\eta_r\text{ as }n\to\infty\,.
\]
\comment{correction-33}
For a probability measure $Q$ on $\bbr$, let $Q^2$ denote its push forward under the map $x\mapsto x^2$. Thus, $WSL^2(1)$, the push forward of $WSL(1)$, is a probability measure on $[0,\infty)$. Corollary 6.7 of \cite{bercovici:voiculescu:1993} implies that
\[
\eta_{r_n}\boxtimes WSL^2(1)\boxtimes\eta_{r_n}\weak\eta_r\boxtimes WSL^2(1)\boxtimes\eta_r\text{ as }n\to\infty\,.
\]
By  Lemma 8 of \cite{arizmendi:abreu:2009}, it follows that the left hand side and right hand side above are $(\eta_{r_n}\boxtimes WSL(1))^2$ and  $(\eta_{r}\boxtimes WSL(1))^2$ respectively. Hence,
\[
 \eta_{r_n}\boxtimes WSL(1)\weak\eta_r\boxtimes WSL(1)\text{ as }n\to\infty\,,
\]
which follows because the probability measures above are symmetric about zero.
Again, Fourier inversion and \eqref{t3.eq2} tells us that
\[
 \lim_{n\to\infty}\int_{[-\pi,\pi]^2}\left|g_n(x,y)-g(x,y)\right|dxdy=0\,.
\]
An appeal to Lemma \ref{ac.l3} establishes \eqref{t3.eq1}, and thus completes the proof.
\end{proof}

Finally, we prove Theorem \ref{t4}. 

\begin{proof}[Proof of Theorem \ref{t4}]
 Define
\[
 g(x,y):=\frac12\left[f(x,y)+f(y,x)\right],\,-\pi\le x,y\le\pi\,.
\]
By Lemma \ref{ac.l4}, it suffices to show that
\begin{equation}\label{t4.eq1}
 \mu_g=WSL(2\|f\|_1)\,.
\end{equation}
To that end, set
\comment{correction-34}
\[
d_{j,k}= (2\pi)^{-1}\int_{[-\pi,\pi]^2} e^{-\iota (jx+ky)}\sqrt{g(x,y)}dx dy
\]
and
\[
 h_n(x,y):=(2\pi)^{-1}\sum_{j,k\in\bbz}d_{j,k}\one(j,k\in A_n)e^{\io(jx+ky)},\,-\pi\le x,y\le\pi\,.
\]
Since $d_{j,k}$ are the Fourier coefficients of $\sqrt g$, which is even by~\eqref{eq.even}, they are real numbers, and furthermore by the Parseval's identity, it follows that
\[
 \sum_{j,k\in\bbz}d_{j,k}^2<\infty\,,
\]
and hence in view of the assumption that $A_n\uparrow\bbz$, it follows that as $n\to\infty$,
\[
 h_n\to\sqrt g\text{ in }L^2\,.
\]
Define
\[
 g_n(\cdot,\cdot):=|h_n(\cdot,\cdot)|^2,\,n\ge1\,,
\]
where the modulus is necessary because $h_n(\cdot)$ is $\bbc$-valued. Therefore,
\begin{equation}\label{t4.eq2}
 g_n\to g\text{ in }L^1\,.
\end{equation}

Fix $n\ge1$. Since $d_{j,k}$ is real, it is easy to see that
\begin{eqnarray*}
 &&g_n(x,y)\\
 &=&(2\pi)^{-2}\sum_{i,j,k,l\in\bbz}d_{i,j}d_{k,l}\one(i,j,k,l\in A_n)e^{\io((i-k)x+(j-l)y)}\\
 &=&(2\pi)^{-2}\sum_{u,v\in\bbz}e^{-\io(ux+vy)}\sum_{i,j\in\bbz}d_{i,j}d_{i+u,j+v}\one(i,j,i+u,j+v\in A_n)\,.
\end{eqnarray*}

Since $A_n$ is a finite set, $g_n$ is a trigonometric polynomial.  By \eqref{ac.l5.claim1} of Lemma \ref{ac.l5} and the observation that $g_n(x,y)=g_n(y,x)$, it follows that for all $m\ge1$,
\begin{equation}\label{t4.eq3}
\int_\bbr x^{2m}\mu_{g_n}(dx)=
\end{equation}
\[
 \sum_{\sigma\in NC_2(2m)}\sum_{k\in S(\sigma)} \prod_{(u,v)\in\sigma}2\int_{[-\pi,\pi]^2} e^{\io(k_ux+k_vy)}g_n(x,-y)dxdy\,.
\]
Fix $u\in\bbz\setminus\{0\}$, and notice that
\begin{eqnarray}
\label{t4.eq5} &&\int_{[-\pi,\pi]^2} e^{\io ux}g_n(x,-y)dxdy\\
\nonumber & =&\sum_{i,j\in\bbz}d_{i,j}d_{i+u,j}\one(i,j,i+u\in A_n)=0\,.
\end{eqnarray}
From the above, a simple induction on $m$ will show that for all $\sigma\in NC_2(2m)$ and for all $k\in S(\sigma)$,
\[
 \prod_{(u,v)\in\sigma}\int_{[-\pi,\pi]^2} e^{\io(k_ux+k_vy)}g_n(x,-y)dxdy\neq0
\]
implies that $k=(0,\ldots,0)$. A proof of the induction step is sketched in the following lines. Assume that for a fixed $m\ge1$, the claim is true for all $\sigma\in NC_2(2m)$ and for all $k\in S(\sigma)$. Now, fix a $\pi\in NC_2(2m+1)$, and assume that for some $k\in S(\pi)$,
\begin{equation}\label{t4.eq4}
 \prod_{(u,v)\in\pi}\int_{[-\pi,\pi]^2} e^{\io(k_ux+k_vy)}g_n(x,-y)dxdy\neq0\,.
\end{equation}
By definition of non-crossing pair partitions, there exist $1\le u\le2m+1$ such that $(u,u+1)\in\pi$, and hence $(\ol u)$ is a block of the Kreweras complement of $\pi$; the reader may refer to page \pageref{pg.combinatorics} for the definition of the Kreweras complement. Therefore, $k_u=0$. Equation \eqref{t4.eq4} implies that
\[
\int_{[-\pi,\pi]^2} e^{\io(k_ux+k_{u+1}y)}g_n(x,-y)dxdy\neq0\,,
\]
and hence it follows from \eqref{t4.eq5} that $k_{u+1}=0$. Let $\sigma$ denote the element of $NC_2(2m)$ obtained from $\pi$ by deleting $(u,u+1)$, and relabeling the indices in the natural way. That is, if for all $1\le j\le2m$,
\[
\gamma(j):=
\begin{cases}
j,&1\le j\le u-1\,,\\
j+2,&u\le j\le2m\,,
\end{cases}
\]
then $\sigma=\{(\gamma(u),\gamma(v)):(u,v)\in\pi\}$. It is then easy to see that $(k_{\gamma(u)}:1\le u\le2m)\in S(\sigma)$ because $k_{u+1}=0$, and furthermore \eqref{t4.eq4} implies that
\[
 \prod_{(u,v)\in\sigma}\int_{[-\pi,\pi]^2} e^{\io(k_{\gamma(u)}x+k_{\gamma(v)}y)}g_n(x,-y)dxdy\neq0\,.
\]
Thus, the induction hypothesis applies, implying that $k_{\gamma(u)}=0$ for all $1\le u\le2m$, thereby completing the induction step.
\comment{correction-35}

Therefore, \eqref{t4.eq3} boils down to
\[
 \int_\bbr x^{2m}\mu_{g_n}(dx)=(2\|g_n\|_1)^m\#NC_2(2m),\,m\ge1\,,
\]
and hence
\[
 \mu_{g_n}=WSL(2\|g_n\|_1)\,.
\]
Equation \eqref{t4.eq2} with an appeal to Lemma \ref{ac.l3} shows \eqref{t4.eq1} and thus completes the proof.
\end{proof}

\subsection{Proofs of Theorems \ref{discrete.t1} - \ref{discrete.t2}}
As the first step towards proving Theorem \ref{discrete.t1}, we start with a special case. 

\begin{prop}\label{p1} 
There exists a random point measure $\xi$ which is almost surely in ${\mathcal C}_2$ such that
\begin{equation}\label{p1.eq4}
 d_2\left(\eim(\widetilde W_N/N),\xi\right)\prob0\,,
\end{equation}
as $N\to\infty$, where $\widetilde W_N$ is as in \eqref{eq.deftildew}.
\end{prop}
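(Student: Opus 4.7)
The plan is to treat $\widetilde W_N$ as a finite-rank perturbation of its truncated version $\widetilde W_{N,n}$ from \eqref{eq.deftildewnn} and apply the Cauchy-type criterion Fact \ref{f4} in the complete metric space $(\mathcal C_2,d_2)$. The crucial feature is that $\widetilde W_{N,n}$, being built from finitely many trigonometric terms, has rank bounded uniformly in $N$, so the behavior of its eigen measure as $N\to\infty$ can be identified almost explicitly.

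\textbf{Step 1 (low-rank factorization).} Using the identities $\cos(ix_k+jy_k)=\cos(ix_k)\cos(jy_k)-\sin(ix_k)\sin(jy_k)$ and $\sin(ix_k+jy_k)=\sin(ix_k)\cos(jy_k)+\cos(ix_k)\sin(jy_k)$, I can write $\widetilde W_{N,n}=U_{N,n}M_n U_{N,n}^T$, where $U_{N,n}$ is an $N\times r_n$ matrix (with $r_n\le 4n$) whose columns are vectors of the form $(\cos(ix_k))_{i=1}^N$, $(\sin(ix_k))_{i=1}^N$, $(\cos(iy_k))_{i=1}^N$, $(\sin(iy_k))_{i=1}^N$ for $1\le k\le n$, and $M_n$ is an $r_n\times r_n$ symmetric matrix whose entries are linear in $\{\sqrt{a_k}V_{l,k}:1\le l\le 2,\,1\le k\le n\}$.

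\textbf{Step 2 (convergence for fixed $n$).} The non-zero eigenvalues of $\widetilde W_{N,n}/N$ coincide with those of the $r_n\times r_n$ matrix $M_n\cdot\tfrac1N U_{N,n}^T U_{N,n}$. A direct Cesaro-mean computation, using product-to-sum identities together with the fact that $\tfrac1N\sum_{i=1}^N e^{\io i u}\to \one\{u\equiv 0\pmod{2\pi}\}$, shows that $\tfrac1N U_{N,n}^T U_{N,n}\to D_n$ for a deterministic positive semidefinite matrix $D_n$ read off from the frequencies $(x_k,y_k)_{k\le n}$. Continuity of eigenvalues for a fixed-size matrix then furnishes a random element $\xi_n\in\mathcal C_2$, supported on at most $r_n$ non-zero atoms, such that
\[
d_2\left(\eim(\widetilde W_{N,n}/N),\xi_n\right)\prob 0,\qquad N\to\infty.
\]

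\textbf{Step 3 (truncation error and conclusion).} Padding $\widetilde W_N/N$ and $\widetilde W_{N,n}/N$ with enough zero eigenvalues to match their positive and negative spectra separately and applying the Hoffman--Wielandt inequality yields
\[
d_2^2\!\left(\eim(\widetilde W_N/N),\eim(\widetilde W_{N,n}/N)\right)\le \frac1{N^2}\bigl\|\widetilde W_N-\widetilde W_{N,n}\bigr\|_F^2.
\]
Using the independence of the $V_{l,k}$ and $\cos^2+\sin^2=1$, the expectation of the right-hand side is bounded by $4\sum_{k>n}a_k$, which tends to $0$ as $n\to\infty$ uniformly in $N$. Combined with Step 2, Fact \ref{f4} in $(\mathcal C_2,d_2)$ delivers the sought random point measure $\xi$.

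The main obstacle lies in Step 2, specifically the identification of the limit matrix $D_n$: frequencies $(x_k,y_k)$ that take special values such as $0$ or $\pi$, or that satisfy $\bbz$-linear relations with other frequencies, alter the various Cesaro limits, and these degenerate cases must be catalogued when writing $D_n$ down. Once $D_n$ is in hand, however, the $d_2$ convergence is essentially automatic from continuity of eigenvalues of $r_n\times r_n$ matrices, because only finitely many non-zero eigenvalues are in play at each stage.
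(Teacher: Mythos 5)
Your proposal is correct and follows essentially the same route as the paper: a low-rank factorization of $\widetilde W_{N,n}$, Cesaro convergence of the Gram matrix (the paper's Fact \ref{f10}), a symmetrized Sylvester identity (Fact \ref{discrete.f2}) and Hoffman--Wielandt (Fact \ref{discrete.f3}) to pass from Gram-matrix convergence to $d_2$ convergence of eigen measures, and the Cauchy criterion Fact \ref{f4} with the uniform truncation bound $4\sum_{k>n}a_k$. The only difference is bookkeeping: you place all the Gaussian randomness in the middle factor $M_n$ so that the outer matrix $U_{N,n}$ is deterministic and the Gram limit $D_n$ is deterministic, whereas the paper writes $\widetilde W_{N,n}=B_NPB_N^T$ with $\sqrt{|Y_k|}$ absorbed into $B_N$ and $\sgn(Y_k)$ into $P$, so its limit Gram matrix $C_n$ is random; both work, and the ``degenerate frequencies'' concern you raise is already covered by Fact \ref{f10}, which guarantees the Cesaro limits exist for all $x,y$.
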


\begin{remark}\label{rem.r1}
 In view of the inequality
\[
 P\left(d_4(\xi_1,\xi_2)>\vep\right)\le P\left(d_2(\xi_1,\xi_2)>\vep^2\right)
\]
for all $\vep\in(0,1)$ and random measures $\xi_1,\xi_2$ which are almost surely in $\mathcal C_2$, \eqref{p1.eq4} implies that
\begin{equation}\label{rem.r1.eq1}
 d_4\left(\eim(\widetilde W_N/N),\xi\right)\prob0\,,
\end{equation}
as $N\to\infty$.
Thus, the assertion of Proposition \ref{p1} is stronger than that of Theorem \ref{discrete.t1} in the special case when the spectral measure of the input process is discrete.
\end{remark}

A few facts from the literature will be used in the proof of Proposition \ref{p1}, which we shall now list below. The first fact is essentially a consequence of the well known results that any two norms on a finite dimensional vector space are equivalent and that the square root is a continuous function on the set of non-negative definite (n.n.d) matrices. In the following, for a square matrix $C$, ``$C\ge0$'' means that $C$ is n.n.d., and $C^{1/2}$ denotes its n.n.d. square root.
\comment{correction-36}

\begin{fact}\label{discrete.f1}
  Suppose that for every $N\ge1$, $B_N$ is an $N\times p$ matrix, where $p$ is a fixed finite integer. Assume that
\[
 \lim_{N\to\infty}(B_N^TB_N)(i,j)=C(i,j)\text{ for all }1\le i,j\le p\,.
\]
Then $C\ge0$, and for any $p\times p$ symmetric matrix $P$
\begin{equation}\label{discrete.l1.claim}
 \lim_{N\to\infty}\Tr\left[\left\{(B_N^TB_N)^{1/2}P(B_N^TB_N)^{1/2}-C^{1/2}PC^{1/2}\right\}^2\right]=0\,.
\end{equation}
\end{fact}

The next fact is a trivial consequence of the Sylvester's determinant theorem

\begin{fact}\label{discrete.f2}
 Suppose that $B$ and $P$ are $N_1\times N_2$ and $N_2\times N_2$ matrices respectively, the latter being symmetric. Then,
\[
 \eim\left(BPB^T\right)=\eim\left((B^TB)^{1/2}P(B^TB)^{1/2}\right)\,.
\]
\end{fact}
\comment{correction-37}

The next fact that we shall use follows from Corollary 5.3 on page 115 in \cite{markus:1964}. A detailed survey of results similar to this one can be found in Chapter 13 of \cite{bhatia:2007}.

\begin{fact}\label{discrete.f3}
 For symmetric matrices $A$ and $B$ of the same size and a positive { even} integer $p$,
\[
 d_p\left(\eim(A),\eim(B)\right)\le\Tr^{1/p}\left[(A-B)^p\right]\,.
\]
\end{fact}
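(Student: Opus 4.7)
The plan is to deduce Fact \ref{discrete.f3} from the classical Lidskii-Mirsky-Wielandt inequality via two elementary reductions; the only substantive input is the classical inequality itself, which the paper correctly cites from \cite{markus:1964} and \cite{bhatia:2007}. First, I would identify the atoms of $\eim$. Write $\lambda_1^\downarrow(A) \ge \cdots \ge \lambda_N^\downarrow(A)$ for the eigenvalues of $A$ in decreasing order, padding with $0$ for indices $>N$, and set $[x]_+ := \max(x,0)$, $[x]_- := \min(x,0)$. Because $\eim(A) = \infty\delta_0 + \sum_{j=1}^N \delta_{\lambda_j}$ absorbs zero eigenvalues into $\infty\delta_0$, a direct check from the definition gives, for $j\ge 1$,
\[
 \alpha_j(\eim(A)) = [\lambda_j^\downarrow(A)]_+, \qquad \alpha_{-j}(\eim(A)) = [\lambda_{N+1-j}^\downarrow(A)]_-,
\]
the latter interpreted as $0$ when $j>N$. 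Re-indexing the second sum by $k=N+1-j$, which is just a permutation of $\{1,\ldots,N\}$, yields
\[
 d_p(\eim(A),\eim(B))^p = \sum_{j=1}^{N} \Bigl(\bigl|[\lambda_j^\downarrow(A)]_+ - [\lambda_j^\downarrow(B)]_+\bigr|^p + \bigl|[\lambda_j^\downarrow(A)]_- - [\lambda_j^\downarrow(B)]_-\bigr|^p\Bigr).
\]

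Second, I would apply a scalar pointwise comparison: for any $a,b \in \bbr$ and $p \ge 1$,
\[
 |[a]_+ - [b]_+|^p + |[a]_- - [b]_-|^p \le |a-b|^p.
\]
If $a,b$ have the same sign, one summand vanishes and the other equals $|a-b|^p$; if $a \ge 0 \ge b$, the bound reduces to $a^p + |b|^p \le (a+|b|)^p$, which is the standard subadditivity of $x \mapsto x^p$ on $[0,\infty)$ for $p \ge 1$. Summing this with $(a,b) = (\lambda_j^\downarrow(A),\lambda_j^\downarrow(B))$ over $j = 1,\ldots,N$ collapses the output of the first step into
\[
 d_p(\eim(A),\eim(B))^p \le \sum_{j=1}^N \bigl|\lambda_j^\downarrow(A) - \lambda_j^\downarrow(B)\bigr|^p.
\]

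Third, I would invoke the Lidskii-Mirsky-Wielandt theorem for the Schatten $p$-norm: for symmetric $A,B$ of the same size and any $p \ge 1$,
\[
 \sum_{j=1}^N \bigl|\lambda_j^\downarrow(A) - \lambda_j^\downarrow(B)\bigr|^p \le \Tr\bigl(|A-B|^p\bigr).
\]
This is the substantive content of Corollary 5.3 in \cite{markus:1964} (see also Chapter 13 of \cite{bhatia:2007}). When $p$ is a positive even integer, the eigenvalues of $(A-B)^p$ equal the $p$-th powers of the real eigenvalues of $A-B$ and are therefore non-negative, so $|A-B|^p = (A-B)^p$ and $\Tr(|A-B|^p) = \Tr((A-B)^p)$; taking $p$-th roots then yields the statement of the fact. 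The main obstacle is this third step — the Lidskii-Mirsky-Wielandt inequality is the deep analytic ingredient, which is exactly why the authors cite it rather than reprove it. Steps one and two are elementary, modulo careful bookkeeping of how zero eigenvalues are absorbed into $\infty\delta_0$ and how the $\alpha$-atom indexing lines up with the ordered spectrum after the positive/negative part operations.
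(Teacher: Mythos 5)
Your proof is correct and in essence coincides with the paper's treatment: the paper establishes this fact simply by citing Corollary 5.3 of \cite{markus:1964} (the Lidskii--Mirsky--Wielandt eigenvalue perturbation inequality) together with Fact \ref{fact.trace}, which is exactly the classical ingredient your third step invokes, while your first two steps just supply the elementary bookkeeping (identifying $\alpha_j(\eim(A))$ with the positive/negative parts of the ordered spectrum and the scalar bound $|[a]_+-[b]_+|^p+|[a]_--[b]_-|^p\le|a-b|^p$) that the paper leaves implicit. One cosmetic slip: the inequality $a^p+|b|^p\le(a+|b|)^p$ for $p\ge1$ is \emph{super}additivity of $x\mapsto x^p$ on $[0,\infty)$, not subadditivity, though the inequality you actually use is stated correctly.
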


The following fact, the proof of which is an easy exercise, will also be needed.

\begin{fact}\label{f10}
 For all $x,y\in\bbr$, the following limits exist:
 \[
  \lim_{N\to\infty}\frac1N\sum_{k=1}^N\sin(kx)\sin(ky)\,,
 \]
\[
  \lim_{N\to\infty}\frac1N\sum_{k=1}^N\sin(kx)\cos(ky)\,,
\]
and
\[
  \lim_{N\to\infty}\frac1N\sum_{k=1}^N\cos(kx)\cos(ky)\,.
\]
Furthermore, for all $x,y\in\bbr$,
\begin{equation}\label{discrete.t2.eq5}
\lim_{N\to\infty}N^{-2}\sum_{i,j=1}^N\left[\cos(ix+jy)+\cos(iy+jx)\right]^2
\end{equation}
exists, and is strictly positive.
\end{fact}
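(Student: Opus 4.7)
My plan is to reduce everything to the elementary Cesàro-type limit
\[
 \lim_{N\to\infty}\frac1N\sum_{k=1}^N e^{\io k\theta} = \one(\theta\in 2\pi\bbz),
\]
which follows by summing the geometric series: if $\theta\notin 2\pi\bbz$, then $\left|\sum_{k=1}^N e^{\io k\theta}\right| \le 2/|1-e^{\io\theta}|$ is bounded in $N$, so dividing by $N$ kills it; if $\theta\in 2\pi\bbz$, each summand equals $1$. Taking real and imaginary parts yields
\[
 \lim_{N\to\infty}\frac1N\sum_{k=1}^N\cos(k\theta)=\one(\theta\in 2\pi\bbz)\,,\quad \lim_{N\to\infty}\frac1N\sum_{k=1}^N\sin(k\theta)=0\,.
\]

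For the first three assertions, I would apply the product-to-sum identities
\[
 \sin(kx)\sin(ky)=\tfrac12[\cos(k(x-y))-\cos(k(x+y))]\,,
\]
\[
 \sin(kx)\cos(ky)=\tfrac12[\sin(k(x+y))+\sin(k(x-y))]\,,
\]
\[
 \cos(kx)\cos(ky)=\tfrac12[\cos(k(x-y))+\cos(k(x+y))]\,,
\]
and invoke the one-dimensional limits above with $\theta=x\pm y$. Existence of all three limits follows immediately.

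For the second part, I would first observe that a similar argument gives the two-dimensional limit
\[
 \lim_{N\to\infty}\frac1{N^2}\sum_{i,j=1}^N\cos(\alpha i+\beta j)=\one(\alpha\in 2\pi\bbz)\one(\beta\in 2\pi\bbz)\,,
\]
obtained by writing $\cos(\alpha i+\beta j)=\cos(\alpha i)\cos(\beta j)-\sin(\alpha i)\sin(\beta j)$ and factoring the double sum into a product of one-dimensional Cesàro sums. Then I would expand the square using $\cos^2\theta=(1+\cos 2\theta)/2$ and $2\cos\alpha\cos\beta=\cos(\alpha-\beta)+\cos(\alpha+\beta)$:
\[
 [\cos(ix+jy)+\cos(iy+jx)]^2
\]
\[
 = 1+\tfrac12\cos(2ix+2jy)+\tfrac12\cos(2iy+2jx)+\cos((i-j)(x-y))+\cos((i+j)(x+y))\,.
\]
The constant $1$ contributes $1$ to the $N^{-2}\sum_{i,j}$ limit, and each of the remaining four oscillatory terms contributes either $0$ or $1$ by the two-dimensional Cesàro limit above. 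Hence the limit in \eqref{discrete.t2.eq5} exists and equals
\[
 1+\one(x,y\in\pi\bbz)+\one(x-y\in 2\pi\bbz)+\one(x+y\in 2\pi\bbz)\ge 1>0\,,
\]
which gives strict positivity. There is no real obstacle here: the computation is entirely elementary once the two-dimensional Cesàro limit is in hand, and the constant $1$ arising from the square of the sum of cosines guarantees the lower bound regardless of $x,y$.
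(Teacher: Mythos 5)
Your proof is correct. The paper states Fact \ref{f10} without proof, calling it ``an easy exercise,'' so there is no paper argument to compare against; the route you take is the natural one. All the steps check out: the geometric-series bound gives the one-dimensional Ces\`aro limit $\frac1N\sum_{k=1}^N e^{\io k\theta}\to\one(\theta\in2\pi\bbz)$, the product-to-sum identities reduce the three scalar limits to it, factoring the double sum gives the two-dimensional Ces\`aro limit $\frac1{N^2}\sum_{i,j}\cos(\alpha i+\beta j)\to\one(\alpha\in2\pi\bbz)\one(\beta\in2\pi\bbz)$, and your algebraic expansion
\[
[\cos(ix+jy)+\cos(iy+jx)]^2=1+\tfrac12\cos(2ix+2jy)+\tfrac12\cos(2iy+2jx)+\cos((i-j)(x-y))+\cos((i+j)(x+y))
\]
is correct. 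Identifying which indicators fire (the two half-terms jointly yield $\one(x\in\pi\bbz)\one(y\in\pi\bbz)$, and the last two terms are of the form $\cos(\alpha i+\beta j)$ with $(\alpha,\beta)=(x-y,-(x-y))$ and $(x+y,x+y)$ respectively, giving $\one(x-y\in2\pi\bbz)$ and $\one(x+y\in2\pi\bbz)$) gives exactly your closed form, and the constant $1$ forces strict positivity uniformly in $(x,y)$. A couple of spot checks (e.g.\ $x=y=0$ gives $4$, $x=\pi,y=0$ gives $2$, $x=\pi/2,y=0$ gives $1$) confirm the formula. No gaps.
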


\begin{proof}[Proof of Proposition \ref{p1}]
 Recalling the definition of $\widetilde W_{N,n}$ from \eqref{eq.deftildewnn}, in view of Fact \ref{f4}, it suffices to show that there exists a random measure $\xi_n$ which is almost surely in $\mathcal C_2$ such that
\begin{equation}\label{discrete.p1.eq2}
 d_2\left(\eim(\widetilde W_{N,n}/N),\xi_n\right)\prob0\,,
\end{equation}
as $N\to\infty$ for all fixed $n\ge1$, and
\begin{equation}\label{discrete.p1.eq3}
 \lim_{n\to\infty}\limsup_{N\to\infty}P\left[d_2\left(\eim(\widetilde W_{N,n}/N),\eim(\widetilde W_{N}/N)\right)>\vep\right]=0\text{ for all }\vep>0\,.
\end{equation}

Proceeding towards showing \eqref{discrete.p1.eq2}, fix $n\ge1$. By a relabeling, it is easy to see that for all $N\ge1$,
\[
\widetilde W_{N,n}(i,j)=\sum_{k=1}^{4n}Y_k\left[u_k(i)v_k(j)+v_k(i)u_k(j)\right]\,,
\]
where $Y_1,Y_2,\ldots,Y_{4n}$ are normal random variables which are {\bf not necessarily independent}, and for each $k$, there exists $w_k\in\bbr$ such that either
\[
 u_k(i)=\sin(iw_k)\text{ for all }i\,,
\]
or
\[
 u_k(i)=\cos(iw_k)\text{ for all }i\,,
\]
and a similar assertion holds for $v_k$ with $w_k$ replaced by some $z_k$. For $N\ge1$ and $1\le k\le8n$, define an $N\times1$ vector $u_{kN}$ by
\[
 u_{kN}(i)=u_k(i),\,1\le i\le N\,,
\]
and similarly define the vector $v_{kN}$. Next define an $N\times8n$ matrix
\[
 B_N:=\left[\sqrt{|Y_1|}u_{1N}\,\,\sqrt{|Y_1|}v_{1N}\ldots\sqrt{|Y_{4n}|}u_{4n\,N}\,\,\sqrt{|Y_{4n}|}v_{4n\,N}\right]\,,
\]
and a $8n\times8n$ symmetric matrix $P$ by
\[
 P(i,j):=\left\{\begin{array}{ll}
                 \sgn(Y_k),&\text{if }i=2k-1\text{ and }j=2k\text{ for some }k\,,\\
                 \sgn(Y_k),&\text{if }i=2k\text{ and }j=2k-1\text{ for some }k\,,\\
                 0,&\text{otherwise },
                \end{array}
\right.
\]
for all $1\le i,j\le8n$. Then, it is easy to see that
\[
\widetilde W_{N,n}=B_NPB_N^T,\,N\ge1\,.
\]
Fact \ref{discrete.f2} implies that
\begin{equation}\label{discrete.p1.eq1}
 \eim\left(\widetilde W_{N,n}/N\right)=\eim\left(\frac1N(B_N^TB_N)^{1/2}P(B_N^TB_N)^{1/2}\right),\,N\ge1\,.
\end{equation}
By Fact \ref{f10}, it follows that there exists a $8n\times8n$ matrix $C_n$ such that
\[
 \lim_{N\to\infty}\frac1N(B_N^TB_N)(i,j)=C_n(i,j)\text{ almost surely}\,,
\]
for all $1\le i,j\le8n$. Facts \ref{discrete.f1} and \ref{discrete.f3} ensure that
\[
\lim_{N\to\infty} d_2\left(\eim\left(\frac1N(B_N^TB_N)^{1/2}P(B_N^TB_N)^{1/2}\right),\eim\left(C_n^{1/2}PC_n^{1/2}\right)\right)=0\,,
\]
almost surely, which with the aid of \eqref{discrete.p1.eq1} ensures \eqref{discrete.p1.eq2}, with 
\[
\xi_n:=\eim(C_n^{1/2}PC_n^{1/2})\,. 
\]

For \eqref{discrete.p1.eq3}, it suffices to show that
\[
\lim_{n\to\infty}\limsup_{N\to\infty}\E\left[d_2^2\left(\eim(\widetilde W_{N,n}/N),\eim(\widetilde W_{N}/N)\right)\right]=0\,.
\]
To that end, notice that by Fact \ref{discrete.f3},
\begin{eqnarray*}
 \E\left[d_2^2\left(\eim(\widetilde W_{N,n}/N),\eim(\widetilde W_{N}/N)\right)\right]
 &\le&N^{-2}\E\Tr\left[(\widetilde W_{N,n}-\widetilde W_{N})^2\right]\\
 &\le&4\sum_{k=n+1}^\infty a_k\,,
\end{eqnarray*}
the last line following from arguments analogous to those leading from \eqref{ac.t1.eq1} to \eqref{ac.t1.eq2}. This establishes \eqref{discrete.p1.eq3} which along with \eqref{discrete.p1.eq2} and Fact \ref{f4} shows the existence of $\xi$ which is almost surely in $\mathcal C_2$, and satisfies
\begin{equation}\label{discrete.p1.eq5}
 d_2(\xi_n,\xi)\prob0\text{ as }n\to\infty\,,
\end{equation}
and \eqref{p1.eq4}. This completes the proof of Proposition \ref{p1}.
\end{proof}

\begin{proof}[Proof of Theorem \ref{discrete.t1}]
 By the arguments mentioned in Remark \ref{rem.r1}, \eqref{rem.r1.eq1} follows from Proposition \ref{p1}. In view of that, to complete the proof of \eqref{discrete.t1.eq4}, all that needs to be shown is 
\begin{equation}\label{discrete.t1.eq1}
 d_4\left(\eim(\widetilde W_N/N),\eim(W_N/N)\right)\prob0\text{ as }N\to\infty\,.
\end{equation}
To that end, recall \eqref{eq15}, and the definition of $W_{N,n}$ from \eqref{eq.defwnn}. By the triangle inequality, it follows that for all $N,n\ge1$,
\begin{eqnarray*}
&&d_4\left(\eim(\widetilde W_N/N),\eim(W_N/N)\right)\\
&\le&d_4\left(\eim(\widetilde W_N/N),\eim((\widetilde W_N+W_{N,n})/N)\right)\\
&&+d_4\left(\eim((\widetilde W_N+W_{N,n})/N),\eim(W_N/N)\right)\,.
\end{eqnarray*}
By Fact \ref{discrete.f3},
\comment{correction-38} 
it follows that
\begin{eqnarray}
&&\E\left[d_4^4\left(\eim(\widetilde W_N/N),\eim((\widetilde W_N+W_{N,n})/N)\right)\right]\nonumber\\
&\le&\E\left[\Tr((W_{N,n}/N)^4)\right]
\to0\text{ as }N\to\infty\,,\label{discrete.t1.eq2}
\end{eqnarray}
for all fixed $n\ge1$ using \eqref{ac.p1.eq1} with $m=2$. In order to show \eqref{discrete.t1.eq1}, it suffices to prove that
\[
 \lim_{n\to\infty}\limsup_{N\to\infty}P\left[d_4\left(\eim((\widetilde W_N+W_{N,n})/N),\eim(W_N/N)\right)>\vep\right]=0\,,
\]
for all $\vep\in(0,1)$. To that end fix such a $\vep$, and notice by the arguments in Remark \ref{rem.r1},
\begin{eqnarray*}
&&P\left[d_4\left(\eim((\widetilde W_N+W_{N,n})/N),\eim(W_N/N)\right)>\vep\right]\\
&\le&P\left[d_2\left(\eim((\widetilde W_N+W_{N,n})/N),\eim(W_N/N)\right)>\vep^2\right]\\
&\le&\vep^{-4}\E\left[d_2^2\left(\eim((\widetilde W_N+W_{N,n})/N),\eim(W_N/N)\right)\right]\\
&\le&\vep^{-4}N^{-2}\E\left[\Tr\left[(\widetilde W_N+W_{N,n}-W_N)^2\right]\right]\\
&\le&C\sum_{i,j\in\bbz:|i|\vee|j|>n}c_{i,j}^2\,,
\end{eqnarray*}
for some finite constant $C$ which is independent of $N$ and $n$. In the above calculation, the second last line follows from Fact \ref{discrete.f3}, and the last line is analogous to \eqref{ac.t1.eq2}. This shows \eqref{discrete.t1.eq1} which in turn proves \eqref{discrete.t1.eq4}.

In order to complete the proof of Theorem \ref{discrete.t1}, all that needs to be shown is that the distribution of $\xi$ is determined by $\nu_d$. That is, however, obvious from \eqref{p1.eq4} and the fact that the spectral measure of the stationary process $(Z_{i,j}:i,j\in\bbz)$ is $\nu_d$.
\end{proof}

\begin{remark}
 The only reason that in \eqref{discrete.t1.eq4}, $d_4$ cannot be changed to $d_2$ is that the limit \eqref{discrete.t1.eq2} will become false if the index $4$ is replaced by $2$. Every other step in the above proof goes through perfectly fine for $d_2$.
\end{remark}

We next proceed towards proving Theorem \ref{discrete.t2}. For that, we shall need the following two lemmas.

\begin{lemma}\label{discrete.l2}
 Suppose that $G_1,G_2,\ldots$ are i.i.d. $N(0,1)$ random variables and $\{\alpha_{jk}:j,k\in\bbz\}$ are deterministic numbers such that
\[
 \sum_{j,k=1}^{2n}\alpha_{jk}G_jG_k\prob Z\,,
\]
as $n\to\infty$, for some finite random variable $Z$. If $\alpha_{11}\neq0$, then $Z$ has a continuous distribution.
\end{lemma}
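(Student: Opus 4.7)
The plan is to isolate the role of $G_1$ in the quadratic form. Write
\[
 S_n := \sum_{j,k=1}^{2n}\alpha_{jk}G_jG_k = \alpha_{11}G_1^2 + L_n G_1 + M_n\,,
\]
where $L_n := \sum_{k=2}^{2n}(\alpha_{1k}+\alpha_{k1})G_k$ and $M_n := \sum_{j,k=2}^{2n}\alpha_{jk}G_jG_k$ are measurable with respect to the $\sigma$-field $\mathcal{F} := \sigma(G_2, G_3, \ldots)$. Since $G_1 \sim N(0,1)$ is independent of $\mathcal{F}$, controlling the conditional law of the limit $Z$ given $\mathcal{F}$ will yield the conclusion.

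The first step is to show that $L_n$ converges in probability. To that end, I would enlarge the probability space so as to include an independent copy $G_1'$ of $G_1$, also independent of $\mathcal{F}$, and let $S_n'$ denote the same quadratic form with $G_1$ replaced by $G_1'$. The joint law of $(G_1,G_2,G_3,\ldots)$ is unchanged by the enlargement, so $S_n\prob Z$, and by symmetry $S_n' \prob Z'$ for some $Z' \eid Z$. Subtracting gives
\[
 S_n - S_n' = \alpha_{11}\left(G_1^2 - (G_1')^2\right) + L_n(G_1 - G_1') \prob Z - Z'\,.
\]
Because $G_1 - G_1' \neq 0$ almost surely, solving for $L_n$ shows it converges in probability to a finite random variable, call it $L$. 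Since $L_n$ is $\mathcal{F}$-measurable, extracting an almost surely convergent subsequence shows that $L$ is $\mathcal{F}$-measurable as well. It then follows that $M_n = S_n - \alpha_{11} G_1^2 - L_n G_1 \prob Z - \alpha_{11}G_1^2 - L G_1 =: M$, and $M$ is again $\mathcal{F}$-measurable.

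This delivers the identity $Z = \alpha_{11}G_1^2 + L G_1 + M$ with $L, M$ both $\mathcal{F}$-measurable and $G_1 \sim N(0,1)$ independent of $\mathcal{F}$. Completing the square,
\[
 Z = \alpha_{11}\left(G_1 + \frac{L}{2\alpha_{11}}\right)^2 + M - \frac{L^2}{4\alpha_{11}}\,,
\]
so the conditional distribution of $Z$ given $\mathcal{F}$ is an affine image of a noncentral chi-squared distribution with one degree of freedom (note $\alpha_{11} \neq 0$ is used here), which is absolutely continuous. Therefore $P(Z = z\mid\mathcal{F})=0$ almost surely for every $z \in \bbr$, and taking expectations yields $P(Z = z) = 0$ for every $z$, which is the claim. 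The main subtlety I expect is the step producing $L$: one must pass to the enlarged probability space carrying $G_1'$ in order to define the limit, then invoke $\mathcal{F}$-measurability of the sequence $L_n$ to show that $L$ itself is $\mathcal{F}$-measurable, which is what allows conditioning to decouple $G_1$ from the coefficients in the final step.
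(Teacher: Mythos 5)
Your proof is correct and, at its core, uses the same decomposition as the paper: isolating $G_1$ to write $Z = \alpha_{11}G_1^2 + L G_1 + M$ with $L,M$ measurable with respect to $\sigma(G_2,G_3,\ldots)$ and hence independent of $G_1$, then observing that for fixed values of $L,M$ the equation $\alpha_{11}g^2 + Lg + M = z$ has at most two roots, so that the conditional law of $Z$ given the remaining variables is atomless.

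The step where you differ from the paper is in establishing the convergence of the coefficient sequences. The paper passes to a subsequence along which $G_1 X_n + Y_n$ converges almost surely, and then invokes a Fubini-type disintegration: since $(X_n, Y_n)$ lives on a factor of the product space independent of $G_1$, for almost every realization of $(X_n,Y_n)$ the affine map $g\mapsto g X_n + Y_n$ converges for almost every $g$; evaluating at two distinct values of $g$ yields convergence of $X_n$ and $Y_n$ separately. You instead symmetrize, enlarging the space with an independent copy $G_1'$ and subtracting the two realizations of the quadratic form so that the $\mathcal{F}$-measurable part $M_n$ cancels, leaving $\alpha_{11}(G_1^2-(G_1')^2) + L_n(G_1-G_1') \prob Z-Z'$; dividing by $G_1-G_1'\neq0$ recovers the convergence of $L_n$. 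The symmetrization trick avoids passing to a subsequence to get almost sure convergence and avoids the explicit disintegration of the product measure, at the cost of introducing an auxiliary variable and checking that $(S_n')$ is Cauchy in probability (which does follow since Cauchy-in-probability is a finite-dimensional-distributions property and $(S_n')_n$ has the same joint law as $(S_n)_n$). Both routes are sound; yours is a self-contained coupling argument while the paper's is a conditioning argument, and neither is clearly simpler than the other.

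One small point worth noting: you correctly use $L_n = \sum_{k=2}^{2n}(\alpha_{1k}+\alpha_{k1})G_k$; the paper's displayed coefficient $\alpha_{1j}$ appears to omit the $\alpha_{j1}$ term, which is either a typo or a tacit assumption of symmetry in the $\alpha$'s.
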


\begin{proof}
 The given hypothesis implies that
\[
 G_1\sum_{j=2}^{2n}\alpha_{1j}G_j+\sum_{j,k=2}^{2n}\alpha_{jk}G_jG_k\prob Z-\alpha_{11}G_1^2\,,
\]
as $n\to\infty$. By passing to a subsequence, we get a family of random variables $(X_n,Y_n:n\ge1)$ which is independent of $G_1$ such that
\[
 G_1X_n+Y_n\to Z-\alpha_{11}G_1^2,\text{ almost surely, as }n\to\infty\,.
\]
From here, by conditioning on $G_1$ and using the independence, it follows that  $X_n$ and $Y_n$ converge almost surely. Therefore,  there exist random variables $X$ and $Y$  such that $G_1$ is independent of $(X,Y)$ and
\[
 Z=\alpha_{11}G_1^2+G_1X+Y\text{ a.s.}\,.
\]
Since $\alpha_{11}\neq0$, for all $z\in\bbr$, it holds that
\[
 P(Z=z)=\int_{\bbr^2}P\left[\alpha_{11}G_1^2+G_1x+y=z\right]P(X\in dx,Y\in dy)=0
\]
because for every fixed $x$ and $y$, the integrand is zero. This completes the proof. 
\end{proof}

\begin{lemma}\label{discrete.l3}
 Suppose that $H_1,\ldots,H_n$ are square integrable and pairwise uncorrelated random variables, each having variance one.
\comment{correction-39} 
 Assume that $\{\alpha_{jN}:1\le j\le n,N\ge1\}$ are deterministic numbers such that the random variable
\[
 \sum_{j=1}^n\alpha_{jN}H_j\prob Z,
\]
 as $N\to\infty$. Then,
\begin{equation}\label{discrete.l3.claim1}
 \alpha_j:=\lim_{N\to\infty}\alpha_{jN}\text{ exists, for all }1\le j\le n\,,
\end{equation}
and
\[
 Z=\sum_{j=1}^n\alpha_jH_j\text{ a.s.}\,.
\]
\end{lemma}

\begin{proof}
Our first claim is that  
\begin{equation}\label{discrete.l3.eq1}
 \limsup_{N\to\infty}\sum_{j=1}^n\alpha_{jN}^2<\infty\,.
\end{equation}
For the sake of contradiction, assume that the above is false, that is, there exist integers $1\le N_1<N_2<N_3<\ldots$ such that
\[
 \lim_{k\to\infty}\sum_{j=1}^n\alpha_{jN_k}^2=\infty\,.
\]
Define
\[
 \beta_{jk}:=\alpha_{jN_k}/\sqrt{\sum_{i=1}^n\alpha_{iN_k}^2},\,1\le j\le n,k\ge1\,.
\]
Clearly,
\[
 \sum_{j=1}^n\beta_{jk}^2=1\text{ for all }k\ge1\,,
\]
and hence there exists $1\le k_1<k_2<\ldots$ such that
\[
\beta_j:= \lim_{l\to\infty}\beta_{jk_l}\text{ exists, for all }1\le j\le n\,.
\]
Therefore, 
\[
 \sum_{j=1}^n\beta_{jk_l}H_j\to\sum_{j=1}^n\beta_jH_j\text{ a.s., as }l\to\infty\,.
\]
On the other hand, notice that
\[
 \sum_{j=1}^n\beta_{jk_l}H_j=\frac{\sum_{j=1}^n\alpha_{jN_{k_l}}H_j}{\sqrt{\sum_{i=1}^n\alpha_{iN_{k_l}}^2}}\,.
\]
As $l\to\infty$, the numerator of the right hand side converges in probability to $Z$ by the given hypothesis, whereas the denominator goes to infinity. As a result, the right hand side converges to $0$ in probability. Therefore, 
\[
 \sum_{j=1}^n\beta_jH_j=0\text{ a.s.}\,.
\]
This clearly is a contradiction because in view of the assumption of $H_j$'s being pairwise uncorrelated with variance one, it follows that the left hand side has variance one. Therefore, \eqref{discrete.l3.eq1} follows.
 
Once again, for the sake of contradiction, let us assume that \eqref{discrete.l3.claim1} is false. That, in view of \eqref{discrete.l3.eq1}, means there are sequences $1\le N_1<N_2<\ldots$ and $1\le M_1<M_2<\ldots$ such that the limits
\[
 \lim_{k\to\infty}\alpha_{jN_k}=:\alpha_j^{(1)}
\]
and
\[
 \lim_{k\to\infty}\alpha_{jM_k}=:\alpha_j^{(2)}
\]
exist for $1\le j\le n$, and
\[
 \sum_{j=1}^n\left(\alpha_j^{(1)}-\alpha_j^{(2)}\right)^2>0\,.
\]
Therefore,
\[
 \sum_{j=1}^n\left(\alpha_{jN_k}-\alpha_{jM_k}\right)H_j\to\sum_{j=1}^n\left(\alpha_j^{(1)}-\alpha_j^{(2)}\right)H_j\text{ a.s., as }k\to\infty\,,
\]
and the right hand side has a positive variance. This clearly contradicts the hypothesis because it implies that the left hand side converges to zero in probability. This completes the proof of \eqref{discrete.l3.claim1}. The next claim follows from this one.
\end{proof}

\begin{proof}[Proof of Theorem \ref{discrete.t2}]
In view of \eqref{discrete.eq.bound} and \eqref{discrete.p1.eq2}, it follows that
\begin{equation}\label{discrete.t2.eq1}
 \int_\bbr x^2\xi_n(dx)\prob\int_\bbr x^2\xi(dx)\text{ as }n\to\infty\,,
\end{equation}
where $\xi_n$ is as in \eqref{discrete.p1.eq2}. The content of the proof is in showing that there exists real numbers $\{\alpha_{ijkl}:1\le i,j\le2,\,k,l\ge1\}$ such that
\begin{equation}\label{discrete.t2.eq2}
 \int_\bbr x^2\xi_n(dx)=\sum_{i,j=1}^2\sum_{k,l=1}^n\alpha_{ijkl}V_{ik}V_{jl}\text{ for all }n\ge1\,,
\end{equation}
where $V_{ik}$ is as in \eqref{eq.defz}. Furthermore, since a premise of the result is that $\nu_d$ is non-null, we assume without loss of generality that $a_1>0$ which is as in \eqref{neweq1}. Based on that, it will be shown that
\begin{equation}\label{discrete.t2.eq3}
 \alpha_{1111}>0\,.
\end{equation}
Lemma \ref{discrete.l2} along with \eqref{discrete.t2.eq1} - \eqref{discrete.t2.eq3} will establish that $\int_\bbr x^2\xi(dx)$ has a continuous distribution, and thus the claim of Theorem \ref{discrete.t2} will follow.

To that end, notice that by \eqref{discrete.eq.bound} and \eqref{discrete.p1.eq2}, it follows that
\begin{equation}\label{discrete.t2.eq4}
 N^{-2}\Tr\left[\widetilde W_{N,n}^2\right]\prob\int_\bbr x^2\xi_n(dx)\text{ as }N\to\infty\,,
\end{equation}
for all fixed $n\ge1$, where $\widetilde W_{N,n}$ is as in \eqref{eq.deftildewnn}. It is easy to see that
\[
 \widetilde W_{N,n}=\sum_{i=1}^2\sum_{k=1}^nV_{ik}A_{ikN}\,,
\]
where $A_{ikN}$ are $N\times N$ deterministic matrices defined by
\[
 A_{ikN}(u,v):=\left\{\begin{array}{ll}
                      \sqrt{a_k}\left[\cos(ux_k+vy_k)+\cos(vx_k+uy_k)\right],&i=1\,,\\
                      \sqrt{a_k}\left[\sin(ux_k+vy_k)+\sin(vx_k+uy_k)\right],&i=2\,,
                      \end{array}
\right.
\]
for all $1\le u,v\le N$. Therefore,
\[
 N^{-2}\Tr\left[\widetilde W_{N,n}^2\right]=\sum_{i,j=1}^2\sum_{k,l=1}^nV_{ik}V_{jl}N^{-2}\Tr\left(A_{ikN}A_{jlN}\right)\text{ for all }N,n\ge1\,.
\]
Since by \eqref{discrete.t2.eq4} for every fixed $n$ the right hand side converges in probability, and the random variables $\{V_{ik}V_{jl}:1\le i,j\le2,1\le k,l\le n\}$ are uncorrelated, Lemma \ref{discrete.l3} implies that
\[
 \alpha_{ijkl}:=\lim_{N\to\infty}N^{-2}\Tr\left(A_{ikN}A_{jlN}\right)\text{ exists for all }1\le i,j\le2,\,k,l\ge1\,,
\]
and that \eqref{discrete.t2.eq2} holds. Finally, notice that \eqref{discrete.t2.eq3} follows from \eqref{discrete.t2.eq5} in Fact \ref{f10}, implying that the variance of $\int_{\bbr}x^2\xi(dx)$ is positive. This completes the proof of Theorem \ref{discrete.t2}.
\end{proof}

\section{Appendix}


\subsection*{Proof of Fact \ref{f4}}
The authors chose to give a proof since they could not find a ready reference for the fact. For the same, we shall use the following intermediate fact which is very similar to Theorem 3.5, page 56 in \cite{resnick:2007}, and follows by the same arguments as there. 

\begin{fact}\label{fact.t1}
 Let $(\Sigma,d)$ be a metric space, and let $(\Omega,{\mathcal A},P)$ be a probability space. Suppose that $(X_{mn}:1\le m,n\le\infty)$ is a family of random elements in $\Sigma$, that is, measurable maps from $\Omega$ to $\Sigma$, the latter being equipped with the Borel $\sigma$-field induced by $d$. Assume that 
\begin{itemize}
\item for all fixed $1\le m<\infty$,
\[
 d(X_{mn},X_{m\infty})\prob0\,,
\]
as $n\to\infty$,
\item as $m\to\infty$,
\[
 d(X_{m\infty},X_{\infty\infty})\prob0\,,
\]
\item and, for all $\vep>0$,
\[
 \lim_{m\to\infty}\limsup_{n\to\infty}P\left[d(X_{mn},X_{\infty n})>\vep\right]=0\,.
\]
\end{itemize}
Then,
\[
 d(X_{\infty n},X_{\infty\infty})\prob0\,,
\]
as $n\to\infty$.
\end{fact}

\begin{proof}[Proof of Fact~\ref{f4}]
 In order to complete the proof using Fact \ref{fact.t1}, it suffices to show the existence of $X_{\infty\infty}$ such that
\begin{equation}\label{f4.proof.eq1}
 d(X_{m\infty},X_{\infty\infty})\prob0\text{ as }m\to\infty\,.
\end{equation}
To that end, using assumption (2), we choose integers $1\le M_1<M_2<\ldots$ such that
\begin{equation}\label{f4.proof.eq3}
 \limsup_{n\to\infty}P\left[d(X_{mn},X_{\infty n})>k^{-2}\right]\le k^{-2}\text{ for all }m\ge M_k,k\ge1\,.
\end{equation}
For fixed $k\ge1$, and $m_1,m_2\ge M_k$, it follows that
\begin{eqnarray}
\nonumber&&P\left[d(X_{m_1\infty},X_{m_{2}\infty})>4k^{-2}\right]\\
\label{f4.proof.q1}&\le&P\left[d(X_{m_1\infty},X_{m_1n})>k^{-2}\right]\\
\label{f4.proof.q2}&&+P\left[d(X_{m_1n},X_{\infty n})>k^{-2}\right]\\
\label{f4.proof.q3}&&+P\left[d(X_{m_{2}n},X_{\infty n})>k^{-2}\right]\\
\label{f4.proof.q4}&&+P\left[d(X_{m_{2}\infty},X_{m_{2}n})>k^{-2}\right]\,.
\end{eqnarray}
for all $n\ge1$. By \eqref{f4.proof.eq3}, limit superior of the quantities in \eqref{f4.proof.q2} and \eqref{f4.proof.q3}, as $n\to\infty$, are at most $k^{-2}$. By assumption (1), the quantities in \eqref{f4.proof.q1} and \eqref{f4.proof.q4} converge to zero as $n\to\infty$. Therefore, it follows that
\begin{equation}\label{f4.proof.eq2}
 P\left[d(X_{m_1\infty},X_{m_{2}\infty})>4k^{-2}\right]\le2k^{-2}\text{for all }m_1,m_2\ge M_k,\,k\ge1\,.
\end{equation}
In particular,
\[
 P\left[d(X_{M_k\infty},X_{M_{k+1}\infty})>4k^{-2}\right]\le2k^{-2},\,k\ge1\,.
\]
By the Borel Cantelli Lemma, it follows that 
\[
 P\left[d(X_{M_k\infty},X_{M_{k+1}\infty})>4k^{-2}\text{ for at most finitely many }k\text{'s}\right]=1\,.
\]
It is easy to see that on the above event, the sequence $\{X_{M_k\infty}\}$ is Cauchy, and hence convergent because $\Sigma$ is assumed to be complete. Therefore, there exists $X_{\infty\infty}$ such that
\begin{equation}\label{f4.proof.eq4}
 X_{M_k\infty}\to X_{\infty\infty}\text{ a.s., as }k\to\infty\,.
\end{equation}
To complete the proof of \eqref{f4.proof.eq1}, fix $\vep>0$. Let $k_1\ge1$ be such that $k_1^{-2}\le\vep/8$. 
Clearly, \eqref{f4.proof.eq4} also holds in probability, and therefore there exists $k_2$ such that
\begin{equation}\label{f4.proof.eq5}
 P\left[d(X_{M_k\infty}, X_{\infty\infty})>\vep/2\right]\le\vep/2\text{ for all }k\ge k_2\,.
\end{equation}
For all $m\ge M_{k_1\vee k_2}$,
\begin{eqnarray*}
&& P\left[d(X_{m\infty},X_{\infty\infty})>\vep\right]\\
&\le&P\left[d(X_{m\infty},X_{M_{k_1\vee k_2}\infty})>4k_1^{-2}\right]+P\left[d(X_{M_{k_1\vee k_2}\infty},X_{\infty\infty})>\vep/2\right]\,.
\end{eqnarray*}
The first term on the RHS is at most $2k_1^{-2}$ by \eqref{f4.proof.eq2}, which is less than $\vep/2$. The second term on the RHS is at most $\vep/2$ by \eqref{f4.proof.eq5}. This shows \eqref{f4.proof.eq1} and thus completes the proof.
\end{proof}

\subsection*{Proof of \eqref{ac.p1.eq2} in Proposition \ref{ac.p1}}\comment{correction-13(d)}
It may help the reader to recall the Definitions \ref{defn.cat} and \ref{defn.pm} because the proof is built on those. We start with the following definition.

\begin{defn}
For $N,m,n\ge1$, the set of \emph{compound pair matched tuples with parameters $N,n,2m$}, denoted by $CPM(N,n,2m)$ is the class of all $i\in\{1,\ldots,N\}^{2m}$ such that there exists $\pi\in{\mathcal P}(2m)$ satisfying
\[
\left|i_{u}\wedge i_{\gamma(u)}-i_{v}\wedge i_{\gamma(v)}\right|\vee\left|i_{u}\vee i_{\gamma(u)}-i_{v}\vee i_{\gamma(v)}\right|\le2n+1\,,
\]
for all $(u,v)\in\pi$, where
\[
\gamma:\{1,\ldots,2m\}\to\{1,\ldots,2m\}
\]
is defined by
\begin{equation}\label{eq.defgamma}
\gamma(j):=
\begin{cases}
j-1\,,&j\notin\{1,m+1\}\,,\\
m\,,&j=1\,,\\
2m\,,&j=m+1\,.
\end{cases}
\end{equation}
For all $\sigma_1,\sigma_2\in NC_2(m)$ and $k_t\in S(\sigma_t)$ for $t=1,2$, define
\[
CompCat(N,n,\sigma_1,k_1,\sigma_2,k_2):=\Bigl\{(i,j)\in Cat(N,n,\sigma_1,k_1)
\]
\[
\times Cat(N,n,\sigma_2,k_2):\min_{1\le l\le m}|i_l-j_l|>2n+1\Bigr\}\,.
\]
\end{defn}

The following lemma can be proven by standard combinatorial tools and hence the proof is skipped.

\begin{lemma}\label{appendix.l1}
If $m\ge1$ is odd, then
\begin{equation}\label{appendix.l1.eq1}
\lim_{N\to\infty}N^{-(m+2)}\#CPM(N,n,2m)=0\,.
\end{equation}
If $m\ge1$ is even, then
\begin{equation}\label{appendix.l1.eq2}
\lim_{N\to\infty}N^{-(m+2)}\#\Biggl[CPM(N,n,2m)\setminus\bigcup_{\sigma_1,\sigma_2\in NC_2(m)}\,\bigcup_{k_t\in S(\sigma_t),t=1,2}
\end{equation}
\[
Cat(N,n,\sigma_1,k_1)\times Cat(N,n,\sigma_2,k_2)\Biggr]=0\,,
\]
and for all $\sigma_1,\sigma_2\in NC_2(m)$ and $k_t\in S(\sigma_t)$ for $t=1,2$, 
\begin{eqnarray}
\nonumber\#CompCat(N,n,\sigma_1,k_1,\sigma_2,k_2)&=&\#Cat(N,n,\sigma_1,k_1)\#Cat(N,n,\sigma_2,k_2)\\
\label{appendix.l1.eq3}&&+o(N^{m+2})\,,
\end{eqnarray}
as $N\to\infty$.
\end{lemma}

\begin{proof}
[Proof of \eqref{ac.p1.eq2} in Proposition \ref{ac.p1}]
Fix $m,n\ge1$. Note that
\[
\E\left[\Tr^2\left(W_{N,n}^m\right)\right]=\sum_{i\in\{1,\ldots,N\}^{2m}}\tilde E_i\,,
\]
where
\[
\tilde E_i:=\E\left[\prod_{l=1}^{2m}W_{N,n}(i_{\gamma(l)},i_l)\right]\,,
\]
$\gamma$ being as in \eqref{eq.defgamma}. Arguments similar to those leading to \eqref{ac.p1.eq10} and \eqref{ac.p1.eq11} respectively, imply that 
\begin{eqnarray}
\label{appendix.eq1}\E\left[\Tr^2\left(W_{N,n}^m\right)\right]&=&\sum_{i\in CPM(N,n,2m)}\tilde E_i\,,\\
\label{appendix.eq2}\sup_{N\ge1,i\in\{1,\ldots,N\}^{2m}}|\tilde E_i|&<&\infty\,.
\end{eqnarray}
The above, in view of \eqref{appendix.l1.eq1}, immediately imply \eqref{ac.p1.eq2} for $m$ odd. 

Therefore, for the  rest of the proof, we assume that $m$ is even. Equations \eqref{appendix.l1.eq2}, \eqref{appendix.eq1} and \eqref{appendix.eq2} imply that 
\begin{eqnarray}
\nonumber\E\left[\Tr^2\left(W_{N,n}^m\right)\right]&=&\sum_{\sigma_1,\sigma_2\in NC_2(m)}\,\sum_{k_t\in S(\sigma_t),t=1,2}\,\sum_{j_t\in Cat(N,n,\sigma_t,k_t),t=1,2}\tilde E_{(j_1,j_2)}\\
\label{appendix.eq3}&&+o(N^{m+2})\,,
\end{eqnarray}
where $(j_1,j_2)$ denotes the concatenation of $j_1$ and $j_2$. Fix $\sigma_1,\sigma_2\in NC_2(m)$ and $k_t\in S(\sigma_t)$ for $t=1,2$. By \eqref{appendix.l1.eq3} and \eqref{appendix.eq2}, it follows that
\begin{eqnarray*}
&&\sum_{j_t\in Cat(N,n,\sigma_t,k_t),t=1,2}\tilde E_{(j_1,j_2)}\\
&=&o(N^{m+2})+\sum_{(j_1,j_2)\in CompCat(N,n,\sigma_1,k_1,\sigma_2,k_2)}\tilde E_{(j_1,j_2)}\\
&=&o(N^{m+2})+\sum_{(j_1,j_2)\in CompCat(N,n,\sigma_1,k_1,\sigma_2,k_2)}E_{j_1}E_{j_2}\,,
\end{eqnarray*}
where $E_{j_t}$ is as defined in \eqref{ac.p1.eq12}, the last line following from \eqref{ac.p1.eq13}  and the fact that for all
$
(j_1,j_2)\in CompCat(N,n,\sigma_1,k_1,\sigma_2,k_2)
$,
it holds that
\[
\min_{1\le l\le m}|j_{1,l}-j_{2,l}|>2n+1\,,
\]
where $j_t:=(j_{t,1},\ldots,j_{t,m})$, $t=1,2$.
Equations \eqref{appendix.l1.eq3} and \eqref{appendix.eq2} imply that 
\begin{eqnarray*}
&&\sum_{(j_1,j_2)\in CompCat(N,n,\sigma_1,k_1,\sigma_2,k_2)}E_{j_1}E_{j_2}\\
&=&o(N^{m+2})+\sum_{j_t\in Cat(N,n,\sigma_t,k_t),t=1,2}E_{j_1}E_{j_2}\\
&=&o(N^{m+2})+\prod_{t=1}^2\sum_{j\in Cat(N,n,\sigma_t,k_t)}E_j\\
&=&(1+o(1))N^{m+2}\prod_{t=1}^2\prod_{(u,v)\in\sigma_t}\left[\hat f_n(k_{t,u},-k_{t,v})+\hat f_n(k_{t,v},-k_{t,u})\right]\,,
\end{eqnarray*}
the last equality following by ideas similar to those leading to \eqref{ac.p1.eq14}, where $k_t:=(k_{t,1},\ldots,k_{t,m})$. In other words, the last equality uses Lemmas \ref{comb.l1} and \ref{comb.l2}. Putting everything together and using \eqref{appendix.eq3}, it follows that
\[
\lim_{N\to\infty}N^{-(m+2)}\E\left[\Tr^2\left(W_{N,n}^m\right)\right]=\beta_{n,m}^2\,.
\]
An appeal to \eqref{ac.p1.eq1} with $2m$ replaced by $m$ completes the proof of \eqref{ac.p1.eq2}.
\end{proof}
\section*{Acknowledgement}
The authors are grateful to  Manjunath Krishnapur for helpful discussions and to Octavio E. Arizmendi for pointing out a minor error. They thank two anonymous referees for  helping to improve the exposition significantly. The research of AC and RSH is supported by the INSPIRE grant of the Department of Science and Technology, Government of India. 


\def\cprime{$'$}

\end{document}